\documentclass[10pt]{article}
 \usepackage{style}

\usepackage{mhequ}

\usepackage[utf8]{inputenc}
\usepackage[T1]{fontenc}
\usepackage{amsmath, amssymb, amsthm}
\usepackage{mathtools} % For \coloneqq, etc.
\usepackage{geometry}
\usepackage{bm} % While \boldsymbol is often preferred, \bm is versatile
\usepackage{enumerate}
\usepackage{graphicx}
\usepackage{booktabs} % For professional tables
\usepackage{verbatim} % For comment environment
\usepackage[hidelinks]{hyperref} % For clickable links and references
\usepackage{authblk}

\usepackage{bbm}

\numberwithin{equation}{section} % Number equations by section

\newtheorem{theorem}{Theorem}[section]
\newtheorem{corollary}[theorem]{Corollary}
\newtheorem{proposition}[theorem]{Proposition}
\newtheorem{lemma}[theorem]{Lemma}

\theoremstyle{definition}

\newtheorem{assumption}{Assumption}
\theoremstyle{remark}
\newtheorem{remark}{Remark}[section]

\title{\bf Stochastic Scaling Limits and Synchronization \\by Noise in Deep Transformer Models}
\author[$1$]{Andrea Agazzi}
\author[$1$]{Giuseppe Bruno}
\author[$2,1$]{Eloy Mosig Garc\'ia}
\author[$2,1$]{Samuele Saviozzi}
\author[$2$]{Marco Romito}
\affil[$1$]{Department of Mathematics and Statistics, University of Bern (CH)}
\affil[$2$]{Department of Mathematics, University of Pisa (IT)}
\date{\today}

\begin{document}

\maketitle

\begin{abstract}
We prove pathwise convergence of the layerwise evolution of tokens in a finite-depth, finite-width transformer model with MultiLayer Perceptron (MLP) blocks to a continuous-time stochastic interacting particle system. We also identify the stochastic partial differential equation describing the evolution of the tokens' distribution in this limit and prove propagation of chaos when the number of such tokens is large.
    The bounds we establish are quantitative and  the limits we consider commute. We further prove that the limiting stochastic model displays synchronization by noise, and establish exponential dissipation of the interaction energy on average, provided that the common noise is sufficiently coercive relative to the deterministic self-attention drift. We finally characterize the activation functions satisfying the former condition.
\end{abstract}

\section{Introduction}
\label{sec:intro}

The transformer architecture \cite{vaswani2017attention}, which underlies present-day Large Language Models, has been one of the main drivers of recent advances in machine learning and artificial intelligence. 
At each layer, the hidden state of the network is updated by sequentially applying two distinct operations: \emph{attention} modules \cite{bahdanau2014neural}, which capture long-range interactions in the input sequence, and classical MultiLayer Perceptrons (MLPs), acting separately on each element of that sequence.
Despite their empirical success, the mechanisms governing information propagation through depth, and the way attention and MLP blocks jointly shape internal representations, remain only partially understood from a theoretical viewpoint.

Recent progress has come from viewing transformers in suitable scaling limits as deterministic mean-field interacting particle systems modeling the evolution of $N$ tokens\footnote{Units of information fed to the network, e.g., words in a sentence, henceforth interpreted as particles.} through the layers of the neural network architecture (the so-called \emph{residual stream dynamics}), see, among others, \cite{sander2022sinkformers,geshkovski2023emergence,geshkovski2023mathematical,rigollet2025mean}. In these descriptions, depth plays the role of a continuous time variable, and, in the large-context regime ($N\to\infty$), the evolution of token representations is encoded by a PDE for their empirical distribution. This viewpoint is closely connected to the literature on \emph{scaling laws}, where the effect of various scaling exponents controlling the relative size of the network's hyperparameters (e.g., depth, width, context length) on the effective dynamics of the model is studied asymptotically, a regime naturally motivated by the size of modern architectures.
 This approach has also identified clustering of the particles as a mechanism through which deep transformer layers may organize synthetic internal representations of the input data \cite{geshkovski2023mathematical}.

However, this line of work mainly focuses on the attention mechanism and leaves aside the second fundamental layer-wise component of the architecture, namely the MLP block. As a consequence, a quantitative mathematical description of the interplay between self-attention and MLP layers in the residual stream under generic assumptions on the weights of these components is still lacking.

In this paper, we close this gap by proving quantitative convergence estimates of the residual stream dynamics for models including both attention and MLP layers, at initialization, to a stochastic limiting mean-field interacting particle system, in the joint deep and wide limit. 
We then identify the corresponding Eulerian dynamics as a stochastic partial differential equation for the token distribution, and again provide quantitative convergence estimates in the large-$N$ limit. This yields one of the first stochastic scaling limits for transformer residual dynamics (along with the concurrent papers \cite{koubbi2026homogenized,fedorov2026clustering}) and, to our knowledge, the first such limit in which the MLP component is included in the analysis. We finally use the limiting equation to analyze clustering, identifying explicit conditions under which the common noise induces synchronization. More specifically, we provide exponential dissipation estimates of a certain interaction energy whose global minimizers are clustered states. As the dissipation rate in these estimates naturally decomposes in two terms, respectively resulting from the MLP and the attention module, our results clarify quantitatively the interplay between these components of the architecture.

\subsection{Setup and notation} 

\subsubsection{The architecture} We consider a system of $N$ tokens, denoted by $(X_0^i)_{i=1}^N$, each embedded on the $d$-dimensional sphere $\mathbb S^{d-1}$ for $d \in \mathbb N$ , $d\ge 2$ fixed\footnote{The analysis of this paper is restricted to the architecture \emph{after} the embedding step, which we consider as a map from a dictionary (including positional encoding) to $\mathbb S^{d-1}$.}.
The evolution of these tokens through a transformer of depth $L \in \mathbb N$ is described by the following update rule for layer $\ell=0, \dots, L-1$:
\begin{equ}
\label{eq:transformer-update}
\begin{cases}
    Y^{i}_{\ell} &= \LN\left(X^{i}_\ell + 
    \alpha_1(L)
    \Attn_{\vX_\ell}(X^{i}_{\ell}) \right)\\
    X^{i}_{\ell+1} & = \LN\left(Y^{i}_{\ell} + 
    \alpha_2(L)
    G^m_{\ell+1}(Y^{i}_{\ell})\right) 
\end{cases}.
\end{equ}
Here, $\vX_\ell \coloneqq (X_\ell^1, \dots, X_\ell^N)$ denotes the set of tokens at layer $\ell$, $\LN(v) := v/|v|$ represents the normalization/radial projection on the unit sphere\footnote{We assume $v\neq0$. The probability of encountering the origin is zero under our initialization and noise assumptions.} (ensuring that $X_\ell^i, Y_\ell^i \in \mathbb S^{d-1}$ for all $\ell \in \{0,\dots L-1\}$, $i \in [N] := \{1,\dots N\}$, as in \cite{zhang2019root}), and 
\begin{equ}\label{e:alphas}
    \alpha_1(L)= \frac T {L}, \qquad  \alpha_2(L) = \sqrt{\frac T {L}}
\end{equ}
are scaling factors controlling the relative magnitude of the following two core operations:\begin{enumerate}[(i)]
   \item \textit{Self-Attention Mechanism:} The term $\Attn_{\vX_\ell}(X^i_\ell)$ computes the interaction of token $i$ with all other tokens in $\mathbf X_\ell$. A standard definition is \emph{softmax attention}:
    \begin{equ}
        \Attn_{\vX_\ell}(X^i_\ell) \coloneqq \sum_{j=1}^N \frac{\exp( \langle QX_\ell^i , KX_\ell^j\rangle)}{\sum_{k=1}^N \exp( \langle QX_\ell^i , KX_\ell^k\rangle)} (VX_\ell^j)
    \end{equ}
    where $Q,K,V \in \mathbb R^{d \times d}$ are parameter matrices, 
    and $\langle \,\cdot\,,\,\cdot\,\rangle $ denotes the Euclidean inner product in $\mathbb R^d$.
    \item \textit{Feed-Forward MultiLayer Perceptron (MLP):} The function $G^m_\ell: \R^d \to \R^d$ is a single-hidden layer perceptron of width $m \in \mathbb N$, defined as 
    \begin{equ}\label{e:mlp}
        G^m_\ell
        (Y_\ell^i)
        :=\frac{1}{\sqrt{m}}W^\ell\act\left(\frac{1}{\sqrt{d}}U^\ell 
        Y_\ell^i
        + \biasU{\ell}\right) + \biasW{\ell}.
    \end{equ}
    where $\theta_\ell = (U^\ell, W^\ell, \biasU{\ell},\biasW{\ell}) \in \R^{m\times d}\times \R^{d\times m} \times \R^m \times \R^d$ denotes the set of parameters at layer $\ell$ and  $\act~:~\mathbb R \to \mathbb R$ is a Lipschitz-continuous nonlinearity (or \emph{activation function}) applied element-wise to its input.
\end{enumerate}
Jointly, for a choice of parameters $ Q, K, V, \{\theta_\ell\}_{\ell = 1}^L$ the repeated application of the two steps above results in a discrete-time dynamical system modeling the residual stream dynamics (the layerwise evolution of tokens $(X^i)_{i = 1}^N$ through the architecture) in a transformer with softmax attention.

\subsubsection{Initialization}
In the model we consider, as the notation introduced above suggests, while the parameters of the attention layers are kept fixed throughout the architecture, the parameters of the MLP layer may change  across layers. 
In particular, at initialization the weight matrices $U^\ell, W^\ell$ and biases $\biasU{\ell},\biasW{\ell}$ are typically sampled \iid across layers from a given distribution.
    Throughout this paper, we assume that, for each $\ell$ and each such matrix, the entries are \iid normally distributed, i.e., 
    \begin{equ}\label{e:init}
    W^\ell_{i,j}\iidsim \mathcal{N}(0,1),\quad U^\ell_{j,i}\iidsim \mathcal{N}(0,1), \quad \biasW{\ell}_{i}\iidsim \mathcal{N}(0,\varbW^2)\quad \text{and} \quad \biasU{\ell}_{j} \iidsim \mathcal N(0,\varbU^2),
    \end{equ}
    for each $i\le d$, $j \le m$ and $\ell \le L$, and fixed constants $\varbW,\varbU \ge 0$. 
   
Under the above assumptions, as discussed in detail in Section~\ref{s:noise}, we can interpret the (random) functions $G_\ell^m~:~\mathbb S^{d-1} \to \mathbb R^d$ as \iid random fields with ($m$-independent) covariance structure given for $x,y\in \mathbb S^{d-1}$ by\footnote{while these fields are defined on $\mathbb R^d$, we restrict our discussion to the space that is relevant for our analysis.}
\begin{equ}
    \K(x,y) \Id \qquad \text{for } \quad \K(x,y) := 
    \E\left[\act\left(\frac{U_1 x}{\sqrt{d}} + b\right)\act\left(\frac{U_1 y}{\sqrt{d}} + b\right)\right] 
    + \varbW^2\,,
    \label{eq:zonal-kernel}
\end{equ}
 where $\Id$ denotes throughout the identity matrix in $d$ dimensions, $U_1 $ is the first row of a $m\times d$ matrix $U$ with iid $\mathcal{N}(0,1)$ entries, and $b\sim \mathcal N(0,\varbU^2)$.

\begin{remark} \label{r:isotropic} By the distributional invariance of Gaussian vectors $U_1$ under orthogonal transformations, we note that $\K$ is invariant under the joint action of the orthogonal group $O(d)$ on its arguments. As a consequence, $\K$ is a \emph{zonal} (or \emph{isotropic}) kernel, i.e., it can be expressed as $\K(x,y) = \kiso(\langle x,y\rangle)$ for a function $\kiso \colon [-1,1]\to \R$. With a slight abuse of notation, in the following we will write $\kiso := \kiso(1) > 0$.\end{remark}

It is known that, under \eqref{e:init},  when the width $m$ diverges the network $G_\ell^m(\cdot)$ converges in distribution to a Gaussian process $G_\ell(\cdot)$ \cite{neal96,jacot,Matthews2018GaussianPB}. Here, $G_\ell$ is a centered Gaussian process over $\S^{d-1}$ with values in $\R^d$ characterized by the covariance operator $\K \Id$ from \eqref{eq:zonal-kernel}. Furthermore, since $\K$ is continuous on $\S^{d-1}$, the Karhunen-Loève expansion \cite{Rasmussen2006Gaussian} of $G_\ell$ can be written as
    \begin{equ}
    \label{eq:KL_expansion}
G_\ell(x) = \sum_{k=1}^\infty \sigma_k(x) Z_k,
\end{equ}
for functions $\sigma_k~:~\mathbb S^{d-1}\to \mathbb R$ depending on the choice of $\act$, where $\{Z_k\}_k$ are \iid $d$-dimensional standard  Gaussian random vectors: $Z_k \iidsim \mathcal N_d(0, \Id)$.

\subsection{Stochastic scaling limit}
\label{sec:intro-scaling-limit}

Let us now denote by $\mu^{N,L,m}$ the empirical distributions on the $N$ tokens in a transformer of depth $L$ formed by MLPs of width $m$: for $t \in [0,T)$ we define
\begin{equ}
    \mu_t^{N,L,m}:=\frac{1}{N}\sum_{i=1}^N\delta_{X_\ell^{i}}\qquad \text{whenever } t \in [\ell T/L, (\ell +1) T/L).
\end{equ}
This allows us to rewrite the attention vector-field as $\Attn_{\mathbf X_\ell}(x) = \Attn(\mu_t^{N,L,m}, x) $ where
   \begin{equ}\label{e:attention}
        \Attn(\mu, x) \coloneqq \frac{1}{Z[\mu](x)} \int {\exp( \langle Qx , K y\rangle)} Vy\, \mathrm{d}\mu(y)\,,
    \end{equ}
for $Z[\mu](x) \coloneqq \int {\exp( \langle Qx , K y\rangle)}\, \mathrm{d}\mu(y)$.
    
\subsubsection{Finite $N$ setting} 
In this section, we fix the number $N \in \mathbb N$ of tokens and, for a given initial condition $(X_0^i)_{i = 1}^N$ with $X_0^i \in \mathbb S^{d-1}$ for all $i \in [N]$, we compare the discrete-time dynamics \eqref{eq:transformer-update} to the continuous-time process $ \mathbf{\bar X}~:~\mathbb R_{\geq 0} \to (\mathbb S^{d-1})^N$, which is the solution to the system of SDEs:
\begin{equ}
\label{eq:dynamics_tokens}
d{\bar X}^i_{t} =\left(P_{\bar X^i_t}\attN{t}{\bar X}-\kiso\frac{d-1}{2}\bar X^i_t\right)dt + \sum_{k=1}^\infty \sigma_k({\bar X}^i_t) P_{\bar X^i_t} dB^k_t,
\end{equ}
with the initial condition $\bar X_0^i = X_0^i$ for all $i \in [N]$. Here,  $(B_t^k)_{k = 1}^\infty$ are independent, real, $d$-dimensional Brownian motions adapted to a filtration $\{\mathcal F_t^B\}_{t \geq 0}$, $\kiso > 0$ and the diffusion coefficients $\sigma_k$ were defined in the previous section, $P_x y = y - \langle y, x\rangle x$ denotes the projection on the tangent space $T_x \mathbb S^{d-1}$ at $x$, and
\begin{equ}
    \mu_{\mathbf {\bar X}_t}^N := \frac 1 N \sum_i \delta_{\bar X_t^i}\,.
\end{equ} 

We establish quantitative convergence of the trajectories of the residual stream $\mathbf X_\ell$ to the continuous-time process $\bar {\mathbf X}_t$ in the deep and wide limit ($L, m \to \infty$). Here and throughout, $|\cdot|$ denotes the Euclidean norm.
\begin{theorem}
\label{p:main}
     Fix $N \in \mathbb N$, let $T>0$ and let the above assumptions hold. Then for every initial condition $\mathbf X_0 = \bar {\mathbf X}_0 \in (\mathbb S^{d-1})^N$ and every $m, L \in \mathbb N$ there exists a coupling\footnote{That is, a joint realization of $(G_\ell^m)_{\ell=1}^L$ and $(B^k)_{k=1}^\infty$ with the prescribed marginals, see Remark~\ref{rem:global_coupling}.} $\Gamma_{m,L}$ of $(G_\ell^m)_{\ell=1}^L$ and $(B^k)_{k\ge1}$ such that
    \begin{equ}\label{e:main}
\E_{\Gamma_{m,L}}\left[\sup_{0\leq t\leq T}\frac{1}{N}\sum_{i=1}^N |X^{i}_{\lfloor{Lt/T}\rfloor} - \bar X^i_t|^2\right] \leq  
    \mathsf c_{N,T}\left(\frac{\log L}{L} + \frac 1{ m}\right)
    \end{equ}
    where $\mathsf c_{N,T}$ is a constant depending on $N$ and $T$.
\end{theorem}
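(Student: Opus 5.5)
The plan is to split the error into a width part and a depth part, and to route both through an intermediate discrete process. Let $\tilde{\mathbf X}_\ell$ be the residual stream \eqref{eq:transformer-update} with each finite-width MLP $G^m_{\ell}$ replaced by its Gaussian-process limit $G_\ell$. We compare $\mathbf X_\ell$ with $\tilde{\mathbf X}_\ell$, and then $\tilde{\mathbf X}_\ell$ with $\bar{\mathbf X}_t$.

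\textbf{Width comparison.} For the first comparison, condition on the first-layer parameters $(U^\ell,\biasU{\ell})$. Then $G^m_\ell$ is a Gaussian field on $\mathbb S^{d-1}$ with empirical covariance
\[
\K^m(x,y)=\frac1m\sum_{j}\act(\cdot)\act(\cdot)+\varbW^2 .
\]
We couple it to $G_\ell$ through a joint Gaussian realization, for instance via a square-root coupling of the covariance operators at the $N$ relevant points. This gives $\E|G^m_\ell(x)-G_\ell(x)|^2\lesssim \E\|\K^m-\K\|\lesssim 1/m$ uniformly over the points actually visited, which are $\mathcal F_\ell$-measurable. Because the layers are i.i.d. and independent of the past, the per-layer discrepancy $\alpha_2(L)(G^m_{\ell+1}-G_{\ell+1})(Y_\ell)$ is a martingale increment with conditional second moment $\lesssim T/(Lm)$. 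Doob's inequality and a discrete Gronwall argument then give a contribution of order $1/m$. Here we use the Lipschitz property of $\LN$ away from the origin and of $\Attn(\mu,x)$ in $(\mu,x)$ on the sphere. The Lipschitz dependence of $G_\ell$ on $x$ is only in an $L^2$ sense, via the smoothness of $\K$ near the diagonal, which follows from $\act$ being Lipschitz. One subtlety is that $\K^m$ also has to be close in a Lipschitz-type seminorm. I would try to avoid needing this by coupling only at the current point configuration and propagating the error using the $L^2$-regularity of the limiting field.

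\textbf{Depth comparison.} For $\tilde{\mathbf X}$ versus $\bar{\mathbf X}$, expand one step of the Gaussian scheme. With $h=T/L$, we have $\LN(x+v)=x+P_xv-\tfrac12|P_xv|^2x+O(|v|^3)$, and $\E|P_xG(x)|^2=(d-1)\kiso$. One step is therefore
\[
X_{\ell+1}=X_\ell+h\Bigl(P_{X_\ell}\Attn-\tfrac{d-1}{2}\kiso X_\ell\Bigr)+\sqrt h\,P_{X_\ell}G_{\ell+1}(X_\ell)+R_\ell .
\]
Here $R_\ell$ consists of a mean-zero fluctuation of order $h$ and a remainder of order $h^{3/2}$ in $L^2$; the $\LN$ expansion must be controlled on the event that the increments are not large. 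Next we couple $G_{\ell+1}$ with the Brownian increments. Using the Karhunen–Loève expansion \eqref{eq:KL_expansion}, set $Z_k^{(\ell+1)}=h^{-1/2}(B^k_{(\ell+1)h}-B^k_{\ell h})$. Then $\sqrt h\,G_{\ell+1}(x)=\sum_k\sigma_k(x)\Delta B^k$ exactly, so the scheme is an Euler–Maruyama-type (projected) discretization of \eqref{eq:dynamics_tokens} driven by the same noise. The standard strong-error analysis for SDEs with Lipschitz coefficients, applied to the $N$-particle system on the compact manifold, then gives an error of order $h$ in mean square for the grid values. It uses $\sum_k|\sigma_k(x)-\sigma_k(y)|^2=2\kiso-2\kiso(\langle x,y\rangle)\lesssim|x-y|^2$, which requires $\kiso$ to be $C^1$ at $1$. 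The supremum over $t\in[0,T]$ between grid points adds the modulus of continuity of Brownian paths, $\E\sup_\ell\sup_{s\in[\ell h,(\ell+1)h]}|\bar X_s-\bar X_{\ell h}|^2\lesssim h\log(1/h)$. This is the source of $\log L/L$.

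\textbf{Main obstacle.} I expect the main difficulty to be making the Lipschitz and Itô-type estimates rigorous with an infinite-dimensional noise whose coefficients are only as regular as $\kiso$ allows. The delicate point is showing $\kiso\in C^1$ near $1$ (equivalently, the Lipschitz bound on $\sum_k|\sigma_k(x)-\sigma_k(y)|^2$) for merely Lipschitz $\act$. Gaussian smoothing by $U_1$ should give this: for $x,y\in\mathbb S^{d-1}$, $\E|\act(U_1x/\sqrt d+b)-\act(U_1y/\sqrt d+b)|^2\le \mathrm{Lip}(\act)^2|x-y|^2/d$, which is exactly the needed bound. A second difficulty is handling $\LN$ near the origin, which I would treat with a cutoff event of probability $O(L^{-p})$ using Gaussian tails. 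Finally, combining the two couplings into a single $\Gamma_{m,L}$, with the same $G_\ell$ in both, yields \eqref{e:main}.
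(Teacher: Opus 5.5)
Your decomposition differs from the paper's. You keep the normalization in the intermediate process: $\tilde{\mathbf X}$ is the residual stream driven by $G_\ell$. The paper instead expands the true update once and compares it directly with the linear Euler--Maruyama scheme $\hat{\mathbf X}$ driven by $G_\ell$, with coefficients extended off the sphere. Your depth half is sound and matches Lemmas~\ref{lem:increment_delta}--\ref{lem:strong_convergence_sde} in substance: a perturbed Euler scheme on the grid, plus the $h\log(1/h)$ modulus of $\bar{\mathbf X}$ between grid points. The width half, however, has two genuine gaps.

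\textbf{The rate $1/m$.} In $\E|G^m_\ell(x)-G_\ell(x)|^2\lesssim\E\|\K^m-\K\|\lesssim 1/m$, the last inequality is false. At the $N$ points, $\K^m$ is an empirical average of $m$ i.i.d.\ matrices, so $\E\|\K^m-\K\|\asymp m^{-1/2}$. Using only the H\"older-$\tfrac12$ continuity of the matrix square root, $\|\sqrt A-\sqrt B\|_F^2\le\|A-B\|_1$, the square-root coupling gives $m^{-1/2}$, not $m^{-1}$. The $1/m$ rate needs $\E\|\K^m-\K\|_F^2\lesssim 1/m$ together with Lipschitz continuity of the square root at a \emph{nondegenerate} limit, $\|\sqrt A-\sqrt B\|_F\le\|A-B\|_F/\sqrt{\lambda_{\min}(B)}$. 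This is what Proposition~\ref{prop:basteri_trevisan} imports from Basteri--Trevisan, and it is where non-degeneracy of $\mathbf K_0$ (the role of the non-polynomial assumption on $\act$) enters. Because this bound is controlled by $\lambda_{\min}(\mathbf K_0)$ at the current configuration, your claim ``uniformly over the points actually visited'' also needs an argument when tokens come close to each other.

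\textbf{Propagating the error.} You compare two normalized processes, driven by different fields at different points, and propose to propagate the error through the Lipschitz property of $\LN$ away from the origin. This does not close. With $\mathbf w=\sqrt h\,G^m_{\ell+1}(Y_\ell)$ we have $|Y_\ell+\mathbf w|^2=1+2\langle Y_\ell,\mathbf w\rangle+|\mathbf w|^2<1$ with probability close to $\tfrac12$. So the local Lipschitz factor $1/|Y_\ell+\mathbf w|$ exceeds $1+c\sqrt h$ on an event of probability bounded away from zero. Propagating through such a bound gives growth $(1+c\sqrt h)^L=e^{c\sqrt{TL}}$ under the discrete Gr\"onwall lemma.

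What you need is a mean-square amplification of $1+O(h)$ per layer. That comes only from expanding both normalizations to second order. The $O(\sqrt h)$ fluctuation of the Jacobian then appears as a centered term. The quadratic terms produce the same It\^o drift $-\tfrac{d-1}{2}\kiso x$ on both sides, since $G^m$ and $G$ have identical covariance, so they cancel up to $O(h|Y_\ell-\tilde Y_\ell|^2)$ plus an $O(h^{3/2})$ remainder.

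This is exactly the work done in Lemmas~\ref{lem:norm_res} and \ref{lem:squared_reminder}. The paper avoids doing it for two nonlinear processes by comparing with the linear scheme $\hat{\mathbf X}$. There the swap $G^m\to G$ happens at the same points $\mathbf X_\ell$ and is a pure martingale term (term (B) in Lemma~\ref{lem:strong_error}). The discrepancy $\mathbf X-\hat{\mathbf X}$ enters only through Lipschitz coefficients (term (A)).

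With these two repairs your route works. It has a real advantage: $\tilde{\mathbf X}$ stays on $\mathbb S^{d-1}$, so you need none of the cutoffs $\rho$, $T$ or coefficient extensions used in the paper.
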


  In the limiting, stochastic interacting particle system identified in Theorem~\ref{p:main}, the two components of the architecture play complementary roles. On one hand, the attention component captures the mean-field interaction among the particles and enters the limiting equation \eqref{eq:dynamics_tokens} as a deterministic, interacting drift term, analogously to previous works, e.g.,  \cite{geshkovski2023mathematical}. On the other hand, under the scaling \eqref{e:alphas}, the MLP component yields a non-interacting, transport-type common or environmental noise term, in the form of an asymptotically centered Gaussian random field characterized by the Karhunen-Loève expansion of the limiting covariance kernel \eqref{eq:KL_expansion}. As \eqref{eq:dynamics_tokens} is formulated in It\^o form, the additional radial drift term enforces the invariance of $\mathbb S^{d-1}$ under \eqref{eq:dynamics_tokens} by canceling the opposite It\^o correction term (e.g., when applying It\^o's formula to $|X_t^i|^2$).

\begin{remark}[Assumptions]
While Theorem~\ref{p:main} is stated for a single-head softmax attention map and \iid Gaussian initialization of the MLP weights, neither restriction is essential. For the attention mechanism, our arguments rely exclusively on the induced vector field
\begin{equ}
x \longmapsto \Attn(\mu, x)
\end{equ}
being uniformly (in $\mu$) Lipschitz on $\mathbb S^{d-1}$, which ensures the regularity needed to control the associated Vlasov flow. Consequently, our proof automatically holds for multi-head constructions and other Lipschitz-continuous attention variants, including Unnormalized Self Attention \cite{geshkovski2023mathematical}. Similarly, the Gaussianity of the MLP weights is imposed purely for technical convenience: The central limit theorem result we leverage holds universally, in the wide-network limit, for centered, sufficiently light-tailed \iid initializations, yielding the same qualitative limiting behavior \cite{lee2018deep}. 
\end{remark}

\begin{remark}[Uniformity of the limits] \label{r:uniform} The uniform  estimate in Theorem~\ref{p:main} implies commutativity of the $L, m$ limits. Furthermore, the dependence of $\mathsf c_{N,T}$ on $N$ in the above result can be removed by leveraging the distributional convergence $G^m \to G$ at the \emph{functional} level, as opposed to the pointwise level. Such bounds were established in a recent work \cite[Theorem 3.16]{favaro2025quantitative}, under stronger conditions on the activation, i.e., assuming $\act \in C^\infty(\R)$, but result in a slower rate of $m^{-1/4}$ for the wide limit. We also note that propagating this uniform in $N$ layerwise estimate to the deep limit requires establishing a corresponding uniform in $N$ strong Euler-Maruyama convergence estimate in the common-noise, manifold-valued setting considered here. We expand on these points in Section~\ref{sec:deep_lim}. 
\end{remark}

\begin{remark}[Construction of the global adapted coupling]\label{rem:global_coupling}
In Theorem \ref{p:main}, the coupling $\Gamma_{m,L}$ links the finite-width MLP to the limiting continuous noise term. As explained in Section \ref{s:noise}, the starting point of this construction is the layer-by-layer estimate from Proposition \ref{prop:basteri_trevisan}, which for any deterministic $N$-tuple $(x_1,\dots,x_N)\in(\S^{d-1})^N$ couples the MLP layer $G_\ell^m$ with the appropriately rescaled increments of $(B^k)_k$ so as to approximately minimize their $L^2$ distance.
Since the hidden states $\mathbf X_\ell$ are random, we define the coupling recursively: At layer $\ell$, conditional on the outputs of the previous layers, we use the deterministic-input coupling from Proposition \ref{prop:basteri_trevisan} with the realized current inputs. Iterating this construction yields a global coupling $\Gamma_{m,L}$ adapted to the natural filtration. Whenever the coupling is later applied to random tuples, it is understood in this conditional, layerwise sense.
\end{remark}

\paragraph{Related works} The study of residual stream dynamics in deep learning architectures as a continuous time limit originated with the NeuralODE literature \cite{weinan2017proposal,chen2018neural,lu2020mean}. Recent works have established analogous quantitative ($m, L \to \infty$) convergence results for ResNets in the \emph{deterministic} limit, i.e., under the scaling $1/L^{\alpha}$ for $\alpha \in (1/2,1]$ \cite{chizat25}, and have more recently extended these results to the joint large $d$ limit \cite{chizat26}.  Stochastic scaling limits for ResNets were studied in \cite{peluchetti2020infinitely} and in \cite{marion25}, where the authors identify Brownian motion as the deep limit of the residual stream at initialization, while other works prove that the deep and wide limits commute \cite{hayou23,hayou24}. However, all these results hold in the noninteracting setting, at the level of marginals, and in the distributional sense, as opposed to our interacting, pathwise convergence result in $L^2$. In this sense, our results can be seen as an extension and a sharpening of this line of work\footnote{The logarithmic term in our estimate can be dropped if the supremum in \eqref{e:main} is taken \emph{at} the discrete time updates, i.e., at multiples of $T/L$, see Remark~\ref{r:droplog}}, preparing the ground for their extension to training, where the pathwise viewpoint is required. Furthermore, a distinctive mathematical feature of the setting considered above is the Layer Normalization, implemented as the radial projection $\LN(v) = v/{|v|}$. While this constrains the dynamics to the compact manifold $\mathbb{S}^{d-1}$ (yielding useful uniform moment controls), it introduces nonlinear couplings that invalidate the direct application of standard Wasserstein stability estimates for unprojected Euler-Maruyama schemes, such as those predominantly used in the aforementioned continuous-time limits. Bypassing this obstruction requires a careful expansion of the normalization operation and constitutes a central technical step of our estimates.

In the interacting setting, the interpretation of deep transformers as deterministic mean-field interacting particle systems goes back to the seminal works \cite{sander2022sinkformers, geshkovski2023emergence, geshkovski2023mathematical}, initiating a fruitful line of research (see, e.g., \cite{karagodin2024clustering, geshkovski2024measure, chen2025quantitative, polyanskiy2025synchronization, karagodinnormalization}). The MLP component was included in this analysis only very recently \cite{alvarez2026perceptrons,zimin2026yuriiformer}, but these papers considered a different scaling, yielding an additional \emph{deterministic} term to the limiting equation. Stochastic versions of these models were investigated in \cite{shalova2024solutions, balasubramanian2025structure, gerber2025formation}, where the authors studied the effect of adding iid (idiosyncratic) Brownian motions to each token on the multiplicity of the steady states of the system. However, the noise structure chosen in these papers is not expected to arise intrinsically, motivated by the underlying architecture. More recently, the concurrent  works \cite{fedorov2026clustering, koubbi2026homogenized} derive a stochastic continuous-time interacting limit where the noise arises from the random initialization of the self-attention parameters, resulting in a mean-field interacting particle system with common noise. In contrast, in our setting, the randomness emerges from the inclusion of the MLP block, yielding a common noise term that is, importantly, of \emph{noninteracting} type (as opposed to \cite{fedorov2026clustering, koubbi2026homogenized}, where the noise covariance depends on the current state $\mathbf {\bar X}_t$).

\subsubsection{Large $N$ limit} 

We now consider the Eulerian description of  \eqref{eq:dynamics_tokens},  given by the Stochastic Partial Differential Equation (SPDE) in It\^o form
\begin{equ}
    \dd\mu_t + \text{div}_{\mathbb{S}^{d-1}}( P_x\att{\mu_t}{\cdot} \mu_t ) dt + \sum_{k=1}^\infty\sum_{\alpha = 1}^d \text{div}_{\mathbb{S}^{d-1}}(   (P_x\sigma_{k\alpha} )   \mu_t ) dB^{k,\alpha}_t = \frac{\kiso}{2}\Delta_{\mathbb{S}^{d-1}} \mu_t dt,
\label{eq:limiting-spde}
\end{equ}
where $\text{div}_{\mathbb{S}^{d-1}}$ and $\Delta_{\mathbb{S}^{d-1}}$ are the Riemannian divergence and Laplace-Beltrami operator on $\mathbb S^{d-1}$ and throughout  we adopt the compact notation
\begin{equ}\label{e:sigmaka}
\sigma_{k\alpha}(x) := \sigma_k(x) e_\alpha,
\end{equ}
for $\{e_\alpha\}_{\alpha = 1}^d$ the standard basis of $\mathbb R^d$ and $B^{k,\alpha}$ the $\alpha$-th entry of the $k$-th Brownian vector from \eqref{eq:dynamics_tokens}.
The solutions to the above equation are stochastic processes
with values in the space $\mathcal P_1(\mathbb S^{d-1})$ of probability measures on $\mathbb S^{d-1}$. Moreover, the McKean-Vlasov SDE associated with \eqref{eq:limiting-spde} recovers \eqref{eq:dynamics_tokens} when the initial datum is empirical. More precisely, if \eqref{eq:limiting-spde} is initialized from the empirical measure
\begin{equ}
\mu_0^N = \frac1N\sum_{i=1}^N \delta_{X_0^i},
\end{equ}
and driven by the same Brownian motions as in \eqref{eq:dynamics_tokens}, the solution $\mu_t^N$ of \eqref{eq:limiting-spde} coincides almost surely with the empirical measure of the particle system, namely
$
\mu_t^N = \mu_{\mathbf{\bar X}_t}^N
$. In particular, the space of $N$-point empirical measures is closed almost surely under \eqref{eq:limiting-spde}.

Below, we equip the space of measure-valued stochastic processes with the following distance:
\begin{equ} \label{e:distance}
\distS(\mu, \nu) := \E\left[ \sup_{t \in [0,T]}  \W_2^2(\mu_t, \nu_t) \right]\,,
\end{equ}
where the expectation is taken over the probability space where the processes $\mu, \nu$ are jointly defined\footnote{This includes integration over the initial condition, in case it is random, unless specified otherwise (in which case we will write the conditional expectation as $\mathbb E[\,\cdot\,|\mathcal F_0]$).}.  Here and throughout, $ \W_p$ denotes the Wasserstein $p$-distance on probability measures on $\mathbb R^d$ with finite $p$-moments, defined as 
\begin{equ}\label{e:wasserstein}
\W_p(\mu, \nu) := \left(\inf_{\pi\in\Gamma(\mu,\nu)} \int  |x-y|^p \dd \pi(x,y)\right)^{1/p}\,,
\end{equ}
where $\Gamma(\mu,\nu)$ is the set of all joint probability measures on $\S^{d-1}\times\S^{d-1}$ with marginals $\mu$ and $\nu$.

The well-posedness of \eqref{eq:limiting-spde} and the subsequent propagation of chaos result, which connects it to the particle system \eqref{eq:dynamics_tokens}, are established by adapting \cite[Theorem 13 and Theorem 21]{coghi2016propagation} to the spherical setting:
\begin{proposition}
\label{thm:main}
Let $T> 0$ and $\nu_0$ be $\mathcal F_0^B$-measurable, then there exists a unique measure-valued solution $(\nu_t)_{t\in [0,T]}$ to \eqref{eq:limiting-spde}.  Furthermore, let $X_0^i$ be sampled \iid from $\mu_0 \in \mathcal P_1(\mathbb S^{d-1})$, then there exist a positive constant $\mathsf c_T > 0$ such that
\begin{align}
\distS(\mu^N, \mu)  \leq \mathsf c_T  \begin{cases}
            N^{-1/2}& d<4\\
            N^{-1/2}\log(1+N)& d=4\\
            N^{-2/d} & d>4 
        \end{cases}.
\end{align}
\end{proposition}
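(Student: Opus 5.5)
The plan is to follow the Lagrangian strategy of \cite{coghi2016propagation}. We encode the SPDE \eqref{eq:limiting-spde} through its associated conditional McKean--Vlasov SDE with common noise,
\begin{equ}
dY_t = \Big(P_{Y_t}\att{\nu_t}{Y_t} - \kiso\tfrac{d-1}{2}Y_t\Big)dt + \sum_{k} \sigma_k(Y_t)P_{Y_t}\,dB^k_t, \qquad \nu_t = \mathrm{Law}(Y_t\,|\,\mathcal F^B_t),
\end{equ}
where $Y_0\sim\nu_0$ is independent of $B$ given $\mathcal F_0^B$.

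Existence and uniqueness come from a fixed point argument. Take a $\mathcal F^B$-adapted, $\mathcal P(\mathbb S^{d-1})$-valued process $\rho$ and freeze the drift at $\att{\rho_t}{\cdot}$. Since $x\mapsto P_x\att{\rho}{x}$ is Lipschitz uniformly in $\rho$, and $\rho\mapsto\att{\rho}{x}$ is Lipschitz in $\W_1\le\W_2$ (the softmax weights are smooth and bounded away from zero on the compact sphere), the linear SDE has a unique strong solution. The diffusion coefficients $\sigma_kP_x$ are Lipschitz in Hilbert--Schmidt norm because $\sum_k|\sigma_k(x)-\sigma_k(y)|^2 = 2(\kiso-\kiso(\langle x,y\rangle))$, and this requires a Lipschitz bound $\kiso(1)-\kiso(s)\lesssim 1-s$. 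We expect that bound to follow from $\act$ being Lipschitz and the Gaussian representation \eqref{eq:zonal-kernel}. As noted in the text, the radial drift keeps the solution on the sphere.

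Next define $\Phi(\rho)_t = \mathrm{Law}(Y^\rho_t|\mathcal F^B_t)$. Run two solutions driven by the same $B$ and the same $Y_0$, and apply It\^o, BDG and Gronwall to $\E[\sup_t|Y^\rho_t-Y^{\rho'}_t|^2\,|\,\mathcal F^B_T]$. The key point is that the noise is common, so the stochastic integral difference is still controlled conditionally. This gives $\distS(\Phi\rho,\Phi\rho')\le C\int_0^T\distS_t(\rho,\rho')\,dt$, so a Picard iteration or a short-time contraction yields a unique fixed point. We then verify that its conditional law solves \eqref{eq:limiting-spde} by applying It\^o's formula to $\varphi(Y_t)$ and taking conditional expectation given $\mathcal F^B$. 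Uniqueness at the SPDE level follows by the superposition and linearization argument of \cite[Thm.~13]{coghi2016propagation}: given a solution $\nu$, the linear SPDE with drift frozen at $\nu$ has a unique solution, namely the conditional law of the frozen SDE, so $\nu$ is a fixed point of $\Phi$.

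For propagation of chaos we use synchronous coupling. Let $Y^i$ be i.i.d.\ copies of the McKean--Vlasov SDE conditionally on $B$, with $Y^i_0=X^i_0$, and let $\bar X^i$ solve \eqref{eq:dynamics_tokens} with the same $B$. Since the noise coefficient does not depend on the measure, Gronwall gives
\begin{equ}
\E\sup_t\frac1N\sum_i|\bar X^i_t-Y^i_t|^2 \le C_T\int_0^T \E\,\W_2^2\Big(\tfrac1N\textstyle\sum_i\delta_{Y^i_s},\mu_s\Big)ds .
\end{equ}
Then $\W_2^2(\mu^N_t,\mu_t)\le 2\frac1N\sum_i|\bar X^i_t-Y^i_t|^2+2\W_2^2(\frac1N\sum\delta_{Y^i_t},\mu_t)$.

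The main obstacle is the remaining term, $\E\sup_t\W_2^2$ of the empirical measure of conditionally i.i.d.\ samples. Conditionally on $B$, the $Y^i_t$ are i.i.d.\ with law $\mu_t$ on a compact $(d-1)$-dimensional set. At a fixed time, Fournier--Guillin bounds on $\W_2^2\le 2\W_1$ (compact support) give the stated rates, and these are uniform since all moments are bounded. The difficulty is the supremum inside the expectation. We handle it by discretizing $[0,T]$ on a grid of mesh $\delta$ and using $\W_2^2(\cdot)\le 2\W_1$ together with the H\"older-$1/2$ path regularity of $Y^i$ and $\mu_t$ in $L^2$. A union bound then costs at most logarithmic factors, which a suitable choice of $\delta$ together with concentration (bounded differences on the compact sphere) absorbs. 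If this turns out to be too lossy, we would instead follow \cite[Thm.~21]{coghi2016propagation} verbatim, since its proof only uses compactness of the support and the Lipschitz structure, both of which are available here.
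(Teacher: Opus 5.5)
\textbf{There is a gap.} Your well-posedness argument is essentially what the paper gets by invoking \cite[Theorem 13]{coghi2016propagation}: a fixed point for the conditional McKean--Vlasov equation, with the Lipschitz bounds for $\sigma_k$ coming from Lemma~\ref{lem:unif_gl}. Your synchronous-coupling estimate for $\bar X^i-Y^i$ is also correct.

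The gap is in the step you flag yourself, the control of $\E\sup_t\W_2^2(\frac1N\sum_i\delta_{Y^i_t},\mu_t)$. As proposed, it does not deliver the stated rates, for two reasons.
\begin{enumerate}
\item If you bound $\W_2^2\le 2\W_1$ and apply \cite{fournier2015rate} to $\W_1$, you only get $N^{-1/2}\log N$ for $d=2$ and $N^{-1/d}$ for $d\ge 3$. This is worse than the claimed rate in every dimension, so Fournier--Guillin has to be applied with $p=2$ directly.
\item More seriously, a union bound over a time grid with bounded-differences concentration gives $\E\max_{j\le J}\W_2^2\le\max_j\E\W_2^2+\mathsf c\sqrt{\log J/N}$. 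Keeping the oscillation between grid points (of order $\delta\log(1/\delta)$ in expectation) at level $N^{-1/2}$ forces $J$ to grow polynomially in $N$. For $d<4$ you therefore end up with $N^{-1/2}\sqrt{\log N}$, and no choice of $\delta$ removes that factor.
\end{enumerate}

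The missing idea is structural. The noise is purely common and its coefficient does not depend on the measure, so there is no idiosyncratic fluctuation to control at positive times. Indeed, $Y^i_t=\Phi_{0,t}(X^i_0)$ for a single $\mathcal F^B$-measurable random flow $\Phi$, namely the flow of the SDE with drift frozen at $\mu_t$. Hence $\frac1N\sum_i\delta_{Y^i_t}=(\Phi_{0,t})_\#\mu_0^N$ and $\mu_t=(\Phi_{0,t})_\#\mu_0$ are pushforwards under the same map. Take an optimal coupling $\pi_0$ of $(\mu_0^N,\mu_0)$, which is independent of $B$. BDG and Gr\"onwall then give $\E\sup_t\W_2^2(\frac1N\sum_i\delta_{Y^i_t},\mu_t)\le\E\int\sup_t|\Phi_{0,t}(x)-\Phi_{0,t}(y)|^2\,\dd\pi_0(x,y)\le\mathsf c_T\,\E\W_2^2(\mu_0^N,\mu_0)$. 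So the supremum costs nothing, and Fournier--Guillin is only needed at $t=0$.

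This is the paper's argument in another guise. The empirical measure $\mu^N$ of \eqref{eq:dynamics_tokens} is itself the solution of \eqref{eq:limiting-spde} started from $\mu_0^N$. Propagation of chaos therefore reduces to the stability estimate of \cite[Theorem 21]{coghi2016propagation} with respect to the initial datum: the Lagrangian representations of the two solutions are coupled through an optimal initial coupling and the same $B$. The paper upgrades that estimate to a supremum inside the expectation by inserting BDG, and then bounds $\E\W_2^2(\mu_0^N,\mu_0)$ at the stated rate.

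Your fallback cites the right theorem, but you should state explicitly that it is a stability-in-initial-data result applied to $\mu^N$ as an SPDE solution, and that the supremum-inside version needs the BDG modification. Once you use this, the Sznitman coupling with the $Y^i$ and the time discretization become unnecessary.
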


The above result can be combined with the uniform-in-$N$ convergence in $m$ and $L$ discussed in Remark~\ref{r:uniform}, yielding the following:

\begin{corollary}
\label{cor:main}
Under the assumptions of Theorem~\ref{p:main} and Proposition~\ref{thm:main},
\begin{equ}
\lim_{N \to \infty} \, \lim_{L \to \infty} \, \lim_{m \to \infty} \, \distS(\mu^{N,L,m}, \mu) = 0\,.
\end{equ}
Moreover, if the activation function $\act \in C^\infty$, then the three limits above commute.
\end{corollary}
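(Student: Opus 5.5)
The plan is a triangle inequality for the pathwise Wasserstein distance $\distS$, splitting the error into a finite-$N$ part and a mean-field part. All processes are realized on one probability space. The initial tokens $X_0^i$ are i.i.d.\ from $\mu_0$. The MLP fields $(G_\ell^m)_\ell$ are coupled to the Brownian motions $(B^k)_k$ through the coupling $\Gamma_{m,L}$ of Theorem~\ref{p:main}, applied conditionally on $\mathcal F_0$, i.e.\ on the initial tokens. The same $(B^k)_k$ drive both the particle system \eqref{eq:dynamics_tokens} and the SPDE \eqref{eq:limiting-spde} started from $\mu_0$. Since $\W_2^2(\mu,\nu)\le 2\W_2^2(\mu,\rho)+2\W_2^2(\rho,\nu)$ holds pointwise in $t$, we get
\begin{equ}
\distS(\mu^{N,L,m},\mu)\le 2\,\distS(\mu^{N,L,m},\mu^{N}_{\bar{\mathbf X}})+2\,\distS(\mu^{N}_{\bar{\mathbf X}},\mu),
\end{equ}
where $\mu^N_{\bar{\mathbf X}}$ is the empirical measure of \eqref{eq:dynamics_tokens}. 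As noted before Proposition~\ref{thm:main}, $\mu^N_{\bar{\mathbf X}}$ coincides with the solution $\mu^N$ of \eqref{eq:limiting-spde} started from $\mu_0^N$.

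\textbf{First term.} Couple the two empirical measures through the identity matching $X^i_{\lfloor Lt/T\rfloor}\leftrightarrow \bar X^i_t$. This gives
\begin{equ}
\W_2^2\big(\mu_t^{N,L,m},\mu^N_{\bar{\mathbf X}_t}\big)\le \frac1N\sum_i |X^i_{\lfloor Lt/T\rfloor}-\bar X^i_t|^2 .
\end{equ}
Taking the supremum in $t$ inside and applying \eqref{e:main} conditionally on $\mathcal F_0$ bounds this term by $2\mathsf c_{N,T}(\log L/L+1/m)$. Two points need checking. First, $\mathsf c_{N,T}$ must be uniform over initial conditions in $(\mathbb S^{d-1})^N$. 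This is plausible because the sphere is compact and the Lipschitz constants of $\Attn$ are uniform in $\mu$, so I would verify it from the proof of Theorem~\ref{p:main}. Second, the statement \emph{for every initial condition} must integrate over the random initial condition, which is fine by conditioning. Hence, for fixed $N$, $\lim_{L}\lim_m$ of the first term vanishes; the two inner limits may be taken in either order.

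\textbf{Second term.} By Proposition~\ref{thm:main} it is at most $\mathsf c_T\,r_N$, with $r_N\to 0$ the dimension-dependent rate. It does not depend on $L$ or $m$. Letting $m\to\infty$, then $L\to\infty$, then $N\to\infty$ therefore gives $\limsup\le 2\mathsf c_T r_N\to0$, which proves the first claim.

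\textbf{Commutation for $\act\in C^\infty$.} This is the main obstacle. Commutation of all three limits needs the first term to be bounded by $\mathsf c_T\,\varepsilon(L,m)$ uniformly in $N$ with $\varepsilon\to0$. Then $\distS\le \mathsf c_T(\varepsilon(L,m)+r_N)$, and a bound by a sum of vanishing terms in each variable makes any ordering of the limits give zero. Following Remark~\ref{r:uniform}, I would do this in two steps.

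\begin{enumerate}[(i)]
\item \emph{Uniform-in-$N$ wide limit.} Replace the pointwise-in-inputs layer coupling of Proposition~\ref{prop:basteri_trevisan} by the functional coupling of \cite[Theorem 3.16]{favaro2025quantitative}. This couples $G^m_\ell$ and $G_\ell$ as random fields with $\E\|G^m_\ell-G_\ell\|_\infty^2\lesssim m^{-1/4}$ (up to exponent), which no longer depends on the number of inputs.
\item \emph{Uniform-in-$N$ deep limit.} Show that the projected scheme with Gaussian-field increments converges to \eqref{eq:dynamics_tokens} at a rate independent of $N$. The drift $x\mapsto P_x\Attn(\mu,x)$ is Lipschitz uniformly in $\mu$, and it is also $\W$-Lipschitz in $\mu$. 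The noise is a common field, identical for all particles, and $C^1$ in $x$ when $\act$ is smooth. Consequently the per-particle Euler-Maruyama error estimate only ever involves averages $\frac1N\sum_i$, which a Gronwall argument controls independently of $N$.
\end{enumerate}

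The delicate point is expanding $\LN$ to second order uniformly: the correction term $-\kiso\frac{d-1}{2}x$ must match, and the remainders must be bounded by constants depending only on $\sup$ norms of the field and its derivatives, not on $N$.
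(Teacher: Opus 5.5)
Your proposal is correct and follows the paper's proof. It uses the same index-matching coupling of atoms and the same split $\distS(\mu^{N,L,m},\mu)\le 2\distS(\mu^{N,L,m},\mu^N)+2\distS(\mu^N,\mu)$, bounded via Theorem~\ref{p:main} and Proposition~\ref{thm:main}, and for $\act\in C^\infty$ it reaches the same $N$-uniform bound $\mathsf c_T(\log L/L+m^{-1/4}+r_d(N))$. The only difference is that the uniform-in-$N$ estimate you flag as the main obstacle is already the second statement of Proposition~\ref{prop:quantitative_deep_limit}, obtained there exactly by your steps (i)--(ii) (the functional coupling in Proposition~\ref{prop:basteri_trevisan} plus the $N$-uniform Lemmas~\ref{lem:strong_error}, \ref{lem:increment_delta} and \ref{lem:strong_convergence_sde}), so the paper simply cites it.
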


The above result also directly implies convergence in weaker topologies. For instance, at the level of the marginals, for any $t > 0$ it holds that 
\begin{equ}
    \lim_{N \to \infty} \, \lim_{L \to \infty} \, \lim_{m \to \infty} \mu^{N,L,m}_{\lfloor Lt/T \rfloor} = \mu_t
    \end{equ}
    in distribution. Along the same lines, our result equally holds in probability over the (random) sequence of initial condition, interpreting the expectation in \eqref{e:distance} as conditional on such sequence.

\paragraph{Related works} Propagation of chaos in deterministic and stochastic systems is a topic of active research, originating in the mathematical physics literature (see, e.g., \cite{sznitman1991topics} and references therein). These results were applied to deterministic mean-field models of transformers in \cite{bruno2024emergence, bruno2025multiscale,geshkovski2023mathematical}.  In the common noise setting, analogous results were obtained in \cite{coghi2016propagation}, but to the best of the authors' knowledge, no such results exist in the literature for stochastic models of representation dynamics in deep neural network architectures, in particular when considering exchangeable and quantitative deep, wide, and mean-field limits.

\subsection{ Synchronization by noise } 
\label{sec:intro-sync-by-noise}
The second main result presented in this paper concerns the qualitative behavior of the limiting dynamics identified in the previous section. 

 A series of recent works has noticed that tokens in the residual stream of simple transformer models tend to organize into clusters and eventually collapse to a single point \cite{geshkovski2023emergence, bruno2024emergence, chen2025quantitative, geshkovski2024dynamic}. At the theoretical level, this effect has then been investigated, at increasing levels of generality, for certain variants of deep architectures under specific choices of model parameters $Q,K,V$. More specifically, it was shown in \cite{geshkovski2023emergence, geshkovski2023mathematical} that, for softmax attention with $Q^\top K = \beta\Id$, for a $\beta > 0$, $V = \Id$, the self-interaction energy
\begin{equ}\label{e:energy}
    \mathcal E_\beta(\mu) := \frac 1 2 \iint \exp(\beta) -  \exp(\beta \langle x, y\rangle)  \, \dd\mu(x) \, d \mu(y)     
\end{equ}
is a Lyapunov function for the self-attention dynamics,
\begin{equ}\label{e:sa}
\partial_t \mu_t = - \text{div}_{\mathbb S^{d-1}}(\mu_t P_{x} \Attn(\mu_t))\,,
\end{equ}
i.e., it satisfies
\begin{equ}\label{e:dissipation}
    \frac \dd{\dd t} \mathcal E_\beta(\mu_t) \leq 0\,.
\end{equ}
In particular, it was shown that the model being considered enjoys a (twisted) Wasserstein gradient flow structure for this choice of energy \cite{geshkovski2023mathematical,castin2025unified}, whose zeroes (global minimizers) are fully synchronized states given by a Dirac delta $\delta_{x^*}$ centered at a $x^* \in \mathbb S^{d-1}$ \cite{bilyk2019geodesic}.
However, the presence of metastable saddle points of the energy \eqref{e:energy} saturating the bound \eqref{e:dissipation} \cite{bruno2024emergence}, prevents us from obtaining strict dissipation estimates for the dynamics \eqref{e:sa}  that hold uniformly over the initial conditions.

\subsubsection{Synchronization estimates}

Below, we prove that the addition of noise to the system, as described in the previous section, destroys the invariance of those metastable states, allowing to establish exponential decay estimates for the energy \eqref{e:energy} provided that the noise structure satisfies the following condition:
\begin{assumption}
\label{ass:sync_dissipation}
 The function $x \mapsto G^m_\ell(x)$ is not odd almost surely in the choice of parameters. Furthermore, it holds that 
    \begin{equ}\label{e:barl}
        \bar \lambda := (d-1)\kiso'(1) - (d-3) \kiso(1) < 0,
    \end{equ}
where $\kiso$ is defined in Remark \ref{r:isotropic}.
\end{assumption}

In fact, such dissipation estimates can be shown to hold provided that the noiseless model \eqref{e:sa} (resulting from the choice of parameter matrices $Q, K, V$) does not counter the synchronization by increasing too strongly the energy \eqref{e:energy}. This condition is encoded in the following assumption:
\begin{assumption}\label{ass:sync_forcing}
 There exists $\epsilon \in \mathbb R$ such that, under \eqref{e:sa},
$
    \frac \dd{\dd t} \mathcal E_\beta(\mu_t) \leq \epsilon \mathcal E_\beta(\mu_t)
$ on $\mathcal P_1(\mathbb S^{d-1})$.
\end{assumption}

Under the above conditions, we establish Gr\"onwall-type exponential control on the evolution of the energy \eqref{e:energy}. When the rate of the exponential is negative (e.g., when noise is sufficiently strong, and the deterministic field is, comparatively, not too counteractive), 
we obtain exponential decay of the energy towards its global minimizer, i.e., a totally synchronized state.
\begin{theorem}\label{t:sync}
Let $\beta > 0$, 
Assumption~\ref{ass:sync_dissipation} and 
Assumption~\ref{ass:sync_forcing} hold, 
then there exists $\bar \lambda' < 0$ such that the solutions $\bar \mu_t$ of \eqref{eq:limiting-spde} satisfy
\begin{equ}
\mathbb E\left[\mathcal E_\beta(\bar \mu_t)\right] \leq \exp((\epsilon + \bar \lambda')t) \mathbb E[
\mathcal E_\beta(\bar \mu_0)
] \,.
\end{equ}
\end{theorem}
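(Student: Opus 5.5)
The plan is to apply Itô's formula to $\mathcal E_\beta(\bar\mu_t)$ along the SPDE \eqref{eq:limiting-spde} and split the resulting drift into a deterministic (attention) part and a noise part. First rewrite the energy as an expectation over pairs:
\[
\mathcal E_\beta(\bar\mu_t)=\tfrac12\iint \phi(\langle x,y\rangle)\,\dd\bar\mu_t(x)\,\dd\bar\mu_t(y),\qquad \phi(r):=e^\beta-e^{\beta r}.
\]
Because the noise is common, it is convenient to lift to two particles $x_t,y_t$ driven by the same Brownian motions $B^k$. For any $\bar\mu_0$ (through the McKean--Vlasov representation, or first for empirical data and then passing to the limit with Proposition~\ref{thm:main}), the quantity $\mathcal E_\beta(\bar\mu_t)$ is the average of $\tfrac12\phi(\langle x_t,y_t\rangle)$ over pairs started from $\bar\mu_0\otimes\bar\mu_0$, with the $\mathcal F^B$-conditional law. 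The deterministic drift $P_x\Attn(\bar\mu_t,x)$ then contributes exactly the term that appears in $\frac{\dd}{\dd t}\mathcal E_\beta$ under \eqref{e:sa}, frozen at $\bar\mu_t$. By Assumption~\ref{ass:sync_forcing} this term is bounded by $\epsilon\,\mathcal E_\beta(\bar\mu_t)\,\dd t$. The stochastic integrals are martingales, since all coefficients are bounded on the compact sphere, so they vanish in expectation.

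The core computation is the Itô generator of $r_t=\langle x_t,y_t\rangle$ under the common noise with the radial correction. The quadratic variations come from the covariance $\K(x,y)\Id=\kiso(\langle x,y\rangle)\Id$ through $\sum_k\sigma_k(x)\sigma_k(y)=\kiso(r)$. Projecting with $P_x$ and $P_y$ should give
\[
\dd r=\Big[\kiso(r)\big(\mathrm{tr}(P_xP_y)\big)-(d-1)\kiso(1)\,r\Big]\dd t+\dots,\qquad \mathrm{tr}(P_xP_y)=d-2+r^2,
\]
together with a quadratic variation $\dd\langle r\rangle=\big[2\kiso(1)(1-r^2)-2\kiso(r)(\ldots)\big]\dd t$. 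Both are explicit functions of $r$ alone, so the noise part of $\dd\,\mathbb E\phi(r_t)$ equals $\mathbb E[\mathcal L\phi(r_t)]$ for a one-dimensional generator $\mathcal L=b(r)\partial_r+\tfrac12 a(r)\partial_r^2$.

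The key claim, and the main obstacle, is a uniform bound $\mathcal L\phi(r)\le\bar\lambda'\,\phi(r)$ on $[-1,1]$ with some $\bar\lambda'<0$. Near $r=1$, set $\phi(r)\approx\beta e^\beta(1-r)$. Expanding $b$ and $a$ to first order in $1-r$ should give $\mathcal L\phi\approx c\,\bar\lambda\,(1-r)$ with $\bar\lambda$ as in \eqref{e:barl}, up to a positive factor. The constant in \eqref{e:barl} is presumably exactly the outcome of this computation, and its negativity gives local contraction. Away from $r=1$, the ratio $\mathcal L\phi/\phi$ is continuous, so negativity at interior points needs separate checking. At $r=-1$ (antipodal pairs), the degeneracy $\mathcal L\phi(-1)=0$ is excluded by the non-oddness part of Assumption~\ref{ass:sync_dissipation}. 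That condition ensures $\kiso(-1)<\kiso(1)$, so the noise separates antipodal points and $b(-1)>0$. I would first try to establish strict negativity of $\mathcal L\phi$ on $(-1,1)$ by compactness plus these two endpoint analyses. If a pointwise bound fails in the middle, I would fall back to taking $\bar\lambda'$ as the maximum of the continuous ratio $\mathcal L\phi/\phi$ over $[-1,1)$, which extends continuously to $r=1$ with a negative limit, and argue it is negative using positive-definiteness of $\kiso$, i.e.\ Schoenberg's representation with nonnegative Gegenbauer coefficients.

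Combining the two parts gives
\[
\frac{\dd}{\dd t}\mathbb E\,\mathcal E_\beta(\bar\mu_t)\le(\epsilon+\bar\lambda')\,\mathbb E\,\mathcal E_\beta(\bar\mu_t).
\]
Grönwall's inequality then yields the statement. A rigorous justification of Itô's formula for the measure-valued process can be obtained from the particle version via Proposition~\ref{thm:main}, since $\mathcal E_\beta$ is $\W_2$-continuous.
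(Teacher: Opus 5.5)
Your architecture is the paper's: the two-particle lift with common noise, Itô's formula, the attention contribution controlled by Assumption~\ref{ass:sync_forcing}, and the noise contribution reduced to a one-dimensional generator acting on $\phi(r)=e^\beta-e^{\beta r}$. Your coefficients are right: $b(r)=\kiso(r)(d-2+r^2)-(d-1)\kiso(1)r$ and $a(r)=2(1-r^2)\bigl(\kiso(1)-r\kiso(r)\bigr)$. Hence $\mathcal L\phi=-\beta e^{\beta r}\bigl(G_0(r)+\beta G_1(r)\bigr)$ with $G_0:=b$ and $G_1:=a/2$, which is exactly the expression in Lemma~\ref{l:sync_main}.

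The gap is the step you flag and then leave open: strict negativity of $\mathcal L\phi$ on the interior $(-1,1)$. Compactness plus the two endpoint analyses cannot give this, since a continuous function that is negative near both endpoints may still change sign in between. Your fallback (take $\bar\lambda'$ as the maximum of $\mathcal L\phi/\phi$ and ``argue it is negative'' via Schoenberg) restates the claim rather than proving it. Positive-definiteness in the form $|\kiso(u)|\le\kiso(1)$ does give $G_1\ge0$, but it does not help with $G_0$. For $u>0$ you need $\kiso(u)/\kiso(1)>(d-1)u/(d-2+u^2)$, a positive quantity tending to $1$ as $u\to1$, whereas $|\kiso(u)|\le\kiso(1)$ only gives $\kiso(u)\ge-\kiso(1)$.

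What is missing is a way to turn the first-order condition $\bar\lambda<0$ at $u=1$ into a lower bound for $\kiso$ on all of $[-1,1]$. The paper uses the tangent-line inequality for normalized Gegenbauer polynomials, $P_{n,d}(u)\ge1-P_{n,d}'(1)(1-u)$. Summing against the nonnegative Schoenberg coefficients gives $\kiso(u)\ge\kiso(1)-\kiso'(1)(1-u)$. With $\kiso'(1)<\frac{d-3}{d-1}\kiso(1)$ this yields $\kiso(u)>\kiso(1)\frac{2+(d-3)u}{d-1}$. The identity
\[
\frac{2+(d-3)u}{d-1}-\frac{(d-1)u}{d-2+u^2}=\frac{(1-u)^2\bigl((d-3)u+2d-4\bigr)}{(d-1)(d-2+u^2)}
\]
then shows $G_0>0$ on $[-1,1)$. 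The ratio $\mathcal L\phi/\phi$ extends continuously to $u=1$ with limit $\bar\lambda<0$, and compactness finishes.

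There is also a sign slip at $r=-1$. Since $b(-1)=(d-1)\bigl(\kiso(-1)+\kiso(1)\bigr)$, you need $\kiso(-1)>-\kiso(1)$, not $\kiso(-1)<\kiso(1)$. This already follows from the same tangent-line bound at $u=-1$, so the non-oddness hypothesis is not actually needed for this step.
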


\begin{figure}[t]
    \centering
\includegraphics[width=\linewidth]{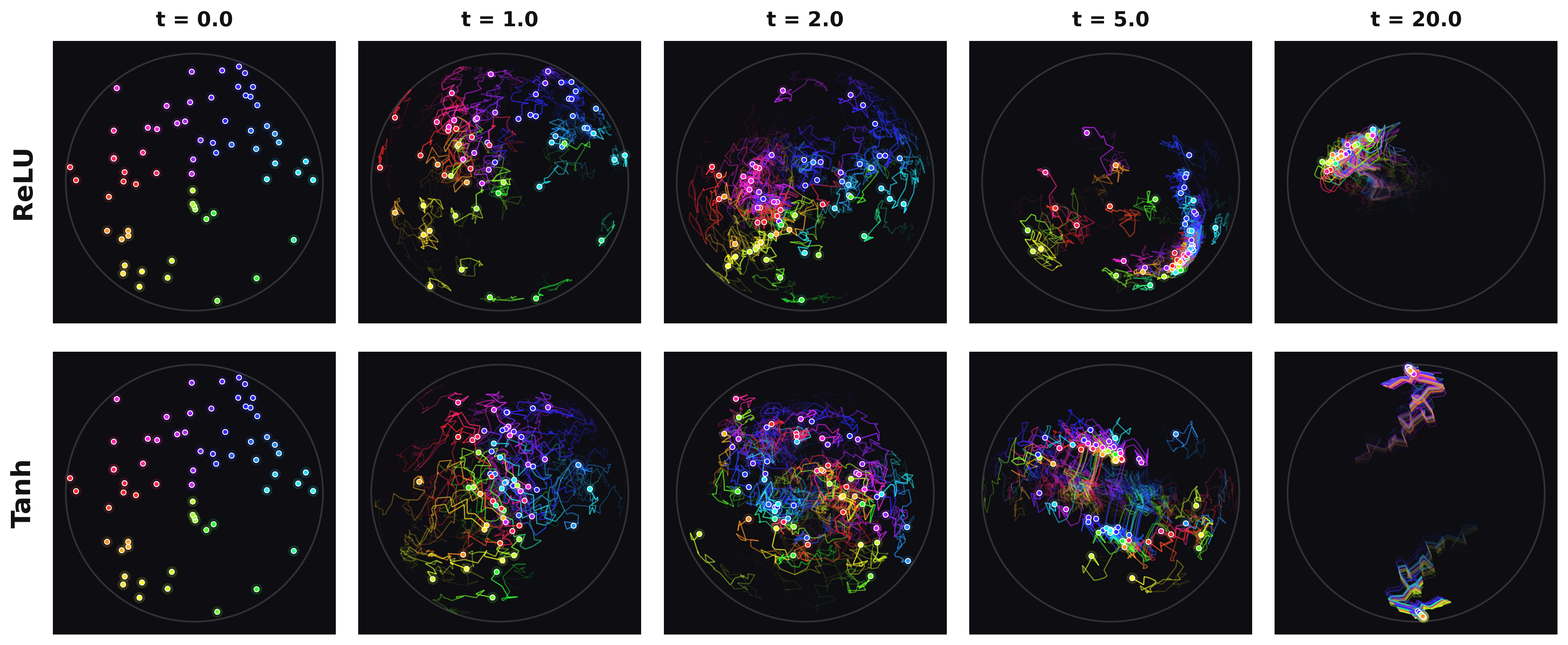}
    \caption{Noise-induced synchronization without attention. Evolution of the first two components of $N=64$ tokens on $\mathbb{S}^3$ (d = 4) over $L=2000$ layers ($T=20$), with attention disabled and MLP width $m=2048$ and variance parameters $\sigma_U^2 = \sigma_W^2 = 0$. For both ReLU and Tanh activations, an initially near-uniform token configuration evolves into localized clusters, illustrating the contractive effect of the common MLP noise discussed in Section \ref{sec:intro-sync-by-noise}. 
    We note that under the given choice of parameters, the activation $\act = \tanh$ yields an a.s. odd random field, violating Assumption~\ref{ass:sync_dissipation} and resulting in a 2-point random attractor. }
    \label{fig:sphere_clustering}
\end{figure}

The proof of this result is provided in Section~\ref{s:sync}, and numerical experiments illustrating its validity are provided in Figure~\ref{fig:sphere_clustering}   and in Appendix~\ref{sec:experiments}. 
Note that, since
\begin{equ}
|x-y|^2 \leq \frac 2{\sinh \beta} (\exp(\beta) - \exp(\beta \langle x,y\rangle) )
\end{equ}
the above result implies convergence in $L^2$, which in turn implies convergence in probability to a random point attractor. This establishes a quantitative version of the synchronization by noise result in \cite{cranston16}.

\begin{remark}[Relation to Lyapunov exponents] We note that the definition \eqref{e:barl} is closely related to the one of the top Lyapunov exponent $\lambda_{1}$ of the noisy dynamics,
\begin{equ}\label{e:purenoise}
dX^i_t = - \kiso(1)\frac{d-1}{2}X^i_t\,dt + P_{X^i_t} \sum_{k=1}^\infty \sigma_k(X_t^i) dB^k_t,\qquad i \in [N]
\end{equ}
which by \cite[Thm. 3.1]{raimond99} is defined as
\begin{equ}\label{e:lambda}
\lambda_{1} = \frac {d-3}2 \kiso'(1) - \frac{d-1}2 \kiso(1)\,.
\end{equ}
Having $\lambda_1 < 0$, in turn, is a sufficient condition for weak synchronization by isotropic noise on the sphere, i.e., for $\lim_{t \to \infty} d(X_i(t), X_j(t)) = 0$ for all $i,j \in [N]$ \cite[Thm. 3.1]{cranston16}. Note that $ \lambda_1 = \bar \lambda/2 -  \kiso(1) -  \kiso'(1)$, so that  $\bar \lambda < 0$ implies $\lambda_1 < 0$ (since $\kiso(1), \kiso'(1) \geq 0$ as shown in Section~\ref{s:sync}). On the other hand, our stronger condition $\bar \lambda < 0$ allows us to establish quantitative convergence estimates for the noisy system \eqref{e:purenoise}. Note that $\bar \lambda < 0$ is only possible for $d>3$.
\end{remark}

\begin{remark}[Assumptions of Theorem \ref{t:sync}]
The non-odd assumption of $G_\ell^m$ guarantees that the random discrete attractor of the dynamics is not composed of more than one point. 
Assumption~\ref{ass:sync_forcing} essentially controls the effect of the deterministic dynamics. If this term is not too strongly adverse to the energy dissipation estimate we obtain from the common noise term (that is, if $\epsilon + \bar \lambda'< 0$), then the energy is dissipated at an exponential rate. This is trivially the case when $Q^\top K = \beta \Id$, $V = \Id$, as in that case Assumption~\ref{ass:sync_forcing} holds with $\epsilon = 0$, but this result extends to the repulsive regime (e.g.,  $Q^\top K = \beta \Id$ and $V = - \delta \Id$ for $\delta>0$ small enough). We further note that, when considering a finite-$N$ system, by the closure of the space of $N$-particles empirical measures under \eqref{eq:limiting-spde}, it is sufficient to verify Assumption~\ref{ass:sync_forcing} on that space, as opposed to the whole $\mathcal P_1(\mathbb S^{d-1})$, to apply Theorem~\ref{t:sync}.
Finally, we note that, as the rates $\epsilon$ in Assumption~\ref{ass:sync_forcing} and $\bar \lambda'$ possibly depend on the choice of $\beta$, the sharpest dissipation estimate our bounds can provide is obtained by optimizing $\epsilon + \bar \lambda'$ over that parameter.
\end{remark}

\paragraph{Related works.} Synchronization and clustering in deterministic transformers have been active topics of research in recent years, with estimates both in the finite- \cite{markdahl2017almost, criscitiello2024synchronization, geshkovski2023mathematical} and infinite-dimensional settings \cite{chen2025quantitative}. 
Synchronization by common noise is a classical phenomenon in the theory of random dynamical systems and stochastic flows. General criteria for weak synchronization and random point attractors were established, e.g., in \cite{flandoli2017synchronization}. On the sphere, the computation of the Lyapunov spectrum for isotropic Brownian flows goes back to \cite{raimond99}, and weak synchronization in the same setting was studied in the noninteracting case by \cite{cranston16} and more recently by \cite{engel26}. Closer to the transformer setting, recent works discuss synchronization in common-noise systems on the sphere. In \cite{fedorov2026clustering}, random value matrices lead to a stochastic interacting system whose clustering behavior is precisely characterized in the two-particle setting, as opposed to the quantitative, arbitrary-$N$ estimates we establish. The homogenized analysis of \cite{koubbi2026homogenized} considers random attention parameters across layers and heads and identifies deterministic and stochastic scaling limits, together with regimes mitigating clustering, yielding an interacting common noise system that is complementary to the one treated in this work.

\subsubsection{Explicit formulas for \(\lambda_1\) and \(\bar\lambda\)}
Finally, we briefly discuss the explicit computation of synchronization rates $\bar \lambda$ and $\lambda_1$ (the top Lyapunov exponent) as a function of the model's parameters. In Section~\ref{s:sync2} we show that for generic activation $\act$ and dimension $d$:
$$
\kiso(1)=\E\left[\act\left(\frac{1}{\sqrt d}X+b_U\right)^2\right]+\varbW^2 \qquad 
\kiso'(1)=\frac{1}{d}\E\left[\act'\left(\frac{1}{\sqrt d}X+b_U\right)^2\right]\,,
$$
allowing to verify that $ \lambda_1 < 0$ and $\bar \lambda < 0$ hold for a typical choice of neural network activation for which these expressions acquire a particularly simple form:
\begin{proposition}\label{p:le}
Let $\act(y) = \max\{0,y\}$ be the ReLU activation, then 
\begin{equ}
    \lambda_1 = -\frac{1}{2d}-\frac{(\varbU^2+2\varbW^2)(d-1)}{4}<0\,,
\end{equ}
and 
\begin{equ}
    \bar \lambda = \frac 1 d - \frac {(\varbU^2+2\varbW^2)(d-3)}{2}\,.
\end{equ}
\end{proposition}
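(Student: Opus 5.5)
The plan is to use the two general expressions for $\kiso(1)$ and $\kiso'(1)$ stated just before the proposition (and established in Section~\ref{s:sync2}), specialize them to $\act(y)=\max\{0,y\}$, and then substitute into the definitions \eqref{e:lambda} of $\lambda_1$ and \eqref{e:barl} of $\bar\lambda$. Write $X\sim\mathcal N(0,1)$ and $b_U\sim\mathcal N(0,\varbU^2)$, independent. Set $Y:=X/\sqrt d+b_U$, which is a centered Gaussian with variance $s^2:=\frac1d+\varbU^2$.

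\textbf{Step 1: the two kernel values.} For the ReLU, $\act(Y)^2=Y^2\mathbbm 1_{\{Y>0\}}$. By symmetry of the centered Gaussian $Y$,
\begin{equ}
\E[\act(Y)^2]=\tfrac12\E[Y^2]=\tfrac12 s^2,
\end{equ}
so that $\kiso(1)=\frac12\bigl(\frac1d+\varbU^2\bigr)+\varbW^2$. Similarly $\act'(Y)^2=\mathbbm 1_{\{Y>0\}}$ almost surely, since $Y$ has no atom at $0$. Hence $\E[\act'(Y)^2]=\frac12$ and $\kiso'(1)=\frac1{2d}$.

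\textbf{Step 2: substitution.} Plugging into \eqref{e:lambda} gives
\begin{equ}
\lambda_1=\frac{d-3}{4d}-\frac{d-1}{4}\Bigl(\frac1d+\varbU^2+2\varbW^2\Bigr)=-\frac1{2d}-\frac{(\varbU^2+2\varbW^2)(d-1)}{4},
\end{equ}
which is strictly negative because $d\ge2$. Plugging into \eqref{e:barl} gives
\begin{equ}
\bar\lambda=\frac{d-1}{2d}-(d-3)\Bigl(\frac1{2d}+\frac{\varbU^2+2\varbW^2}{2}\Bigr)=\frac1d-\frac{(\varbU^2+2\varbW^2)(d-3)}{2}.
\end{equ}

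\textbf{Main obstacle: justifying the derivative formula for ReLU.} The only delicate point is that the formula for $\kiso'(1)$ involves $\act'$, while the ReLU is not differentiable at $0$. Moreover, $\kiso'(1)$ is a one-sided derivative at the endpoint of $[-1,1]$.

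I would justify it via Price's theorem (Gaussian integration by parts). Fix $x,y\in\S^{d-1}$ with $\langle x,y\rangle=\rho$. Then $A=U_1x/\sqrt d+b$ and $B=U_1y/\sqrt d+b$ are jointly Gaussian, each with variance $s^2$, and $\mathrm{Cov}(A,B)=\rho/d+\varbU^2$. Price's theorem then gives
\begin{equ}
\kiso'(\rho)=\frac1d\,\E[\act'(A)\act'(B)].
\end{equ}

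To apply it to the ReLU I would first prove it for mollified $\act_\varepsilon$ and then pass to the limit. The limit exchange is justified because $\act'_\varepsilon\to\mathbbm 1_{(0,\infty)}$ off a null set and is uniformly bounded. Letting $\rho\uparrow1$ forces $A=B$, which yields $\frac1d\E[\mathbbm 1_{\{Y>0\}}]=\frac1{2d}$.

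As a consistency check in the case $\varbU=0$, the first-order arc-cosine kernel is explicit:
\begin{equ}
\kiso(\rho)=\frac{1}{2\pi d}\Bigl(\sqrt{1-\rho^2}+(\pi-\arccos\rho)\rho\Bigr)+\varbW^2,
\end{equ}
and differentiating at $\rho=1$ directly gives $\kiso'(1)=\frac1{2d}$.
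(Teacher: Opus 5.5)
Your proposal is correct and follows essentially the same route as the paper: it evaluates $\kiso(1)=\tfrac12(\tfrac1d+\varbU^2)+\varbW^2$ and $\kiso'(1)=\tfrac1{2d}$ via the Gaussian covariance-derivative (Price) formula of Lemma~\ref{lem:alpha-prime}, then substitutes into \eqref{e:lambda} and \eqref{e:barl}. Your mollification step for the ReLU kink, your care about the one-sided derivative at the endpoint, and the arc-cosine cross-check add rigor that the paper's lemma leaves implicit, since it is stated under a loose ``a.e.\ twice differentiable'' hypothesis.
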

\noindent This result is proven in Section~\ref{s:sync2}.

\subsection{Structure of the paper and notation} 

The remainder of the paper is structured as follows. Theorem~\ref{p:main} and Proposition~\ref{thm:main} are proven in Sections~\ref{sec:deep_lim} and \ref{sec:MF_limit}, respectively proving estimates on the wide-and-deep limit (large $L$ and large $m$) and propagation of chaos in the long context limit (large $N$). The synchronization results Theorem~\ref{t:sync} and Proposition~\ref{p:le} are proven in Section~\ref{s:sync}.
Throughout, we use the letter $\mathsf c$ to denote some positive and finite constant that may depend on several parameters, e.g, $\alpha, \beta, \ldots$. To flag such dependence explicitly, we write $\mathsf c = \mathsf c_{\alpha, \beta, \ldots}$.
We also allow the value of $\mathsf c$ to change from line to line. Some of the more technical proofs in Section \ref{sec:deep_lim} are deferred to Appendix \ref{app:technical-proofs}.

\section{Deep and wide limit}
\label{sec:deep_lim}

In this section, we present the quantitative convergence of the two-step forward Euler (Lie-Trotter) iteration \eqref{eq:transformer-update} to the limiting particle system \eqref{eq:dynamics_tokens} for a fixed number $N$ of particles. 
While stated in the introduction for a specific choice of attention \eqref{e:attention}, our results apply to any Attention map $\attempty ~:~\mathcal P(\mathbb S^{d-1})  \times \mathbb R^{d}  \to \mathbb R^d$
satisfying the following assumptions:
\begin{assumption}[Self-Attention regularity]
\label{ass:att_reg} For any $R>0$ there exists a constant $\mathsf c>0$ such that for any $\mu,\nu\in \mathcal{P}_c(\R^d)$ with support in the Euclidean ball $B(0,R)$ of radius $R$ around $0$,
    \begin{align*}
        \|\att{\mu}{\cdot}\|_{L^\infty(\R^d,\R^d)}&\leq \mathsf c,\\
        \|\nabla_x \att{\mu}{\cdot}\|_{L^\infty(\R^d,\R^d\times\R^d)}&\leq \mathsf c,\\
        \|\att{\mu}{\cdot} -\att{\nu}{\cdot}\|_{L^\infty(B(0,R),\R^d)}&\leq \mathsf c  \W_2(\mu,\nu).
    \end{align*}
\end{assumption}
\begin{remark}[Examples of attentions satisfying Assumption~\ref{ass:att_reg}]
Assumption~\ref{ass:att_reg} is satisfied by many standard self-attention variants, as soon as the
weight matrices have bounded operator norm and we restrict to measures supported in a ball $B(0,R)$.
A convenient class is given by attention maps that can be written as a finite sum (to include multi-head attention) of terms of the form
\begin{equ}
\Gamma_\mu(x)\;:=\;\int_{\R^d} V y\,\mathcal K_\mu(x,y)\,\dd\mu(y),
\qquad x\in\R^d,
\end{equ}
for $V\in\R^{d\times d}$ and some nonnegative kernel $\mathcal K_\mu(x,\cdot)$ depending on $\mu$ (typically normalized so that
$\int \mathcal K_\mu(x,y)\,\dd\mu(y)=1$). 
For instance, in \cite[Section~2 and Appendix~B]{castin2025unified} the following choices are studied
(for fixed $Q,K\in\R^{k\times d}$):
\begin{itemize}
\item \textbf{Softmax self-attention}:
\begin{equ}
\Gamma_\mu^{(\mathrm{SM})}(x)
:=\frac{\int_{\R^d} V y\,e^{\,\langle Qx, Ky\rangle }\,\dd\mu(y)}{\int_{\R^d} e^{\,\langle Qx, Ky\rangle }\,\dd\mu(y)}.
\end{equ}
\item \textbf{$L^2$ self-attention}:
\begin{equ}
\Gamma_\mu^{(\mathrm{L2})}(x)
:=\frac{\int_{\R^d} V y\,e^{-|Qx-Ky|^2}\,\dd\mu(y)}{\int_{\R^d} e^{-|Qx-Ky|^2}\,\dd\mu(y)}.
\end{equ}
\item \textbf{Sinkhorn self-attention}:
\begin{equ}
\Gamma_{\mu,\varepsilon}^{(\mathrm{sink})}(x)
:=\frac1\varepsilon\int_{\R^d} V y\,\mathcal K_{\mu,\varepsilon}^\infty(x,y)\,\dd\mu(y),
\end{equ}
where $\mathcal K_{\mu,\varepsilon}^\infty$ is the bi-stochastic kernel obtained as the limit of the
Sinkhorn--Knopp iterations (see \cite[Eq.~(2.4)]{castin2025unified}).
\end{itemize}
If $\mathrm{supp}(\mu)\subset B(0,R)$, then the uniform boundedness and $x$--Lipschitz bounds follow from the
pointwise estimates on $\sup_x\|\Gamma_\mu(x)\|$ and $\sup_x\|D_x\Gamma_\mu(x)\|$ proved in
\cite[Appendix~B, Lemmas~B.2, B.3, B.6, B.7]{castin2025unified}.
Moreover, the same lemmas provide stability estimates of the form
$\sup_{x\in B(0,R)}|\Gamma_\mu(x)-\Gamma_\nu(x)|\le \mathsf c_R\,W_p(\mu,\nu)$ for $p\ge 1$; taking $p=2$ yields the third inequality in
Assumption~\ref{ass:att_reg}.
\end{remark}

We will further require some regularity assumptions on the MLP layer component of the architecture:
\begin{assumption}[Assumption on the MLP activation]
\label{ass1}
The activation function $\act~:~\mathbb R \to \mathbb R$ is $L_\act$-Lipschitz continuous for a $L_\act > 0$, non polynomial and non almost-everywhere constant.
\end{assumption}

We now proceed to state our first main result, an extension of Theorem~\ref{p:main}, in full generality:
\begin{proposition}
\label{prop:quantitative_deep_limit}
     Let Assumptions~\ref{ass:att_reg} and \ref{ass1} hold, let $X^{i}_{\ell}$ denote the discrete-time dynamics defined in \eqref{eq:transformer-update}, and let $\bar X_t^i$ solve \eqref{eq:dynamics_tokens}. Then for every $m, L \in \mathbb N$ there exists a filtered probability space $(\Omega,\mathcal F, \mathcal F_t, \mathbb P)$ where $G_\ell^m$ and $(B^k)_k$ are jointly defined such that
    $$
    \E\left[\sup_{0\leq t\leq T}\frac{1}{N}\sum_{i=1}^N |X^{i}_{\lfloor{Lt/T}\rfloor} - \bar X^i_t|^2\right] \leq  
    \mathsf c_N \exp({\mathsf c_N T^2} ) \left(\frac{\log L}{L} + \frac 1{ m}\right),
    $$
    where $\mathsf c_N$ is a constant depending on {$ \act, \att{\cdot}{\cdot}, d, N$, }but not on $m, L$ or $T$. Assuming further that $\act \in C^\infty(\R)$ there exists a $m, L, N$-independent constant $\mathsf c$ such that we have 
        $$
    \E\left[\sup_{0\leq t\leq T}\frac{1}{N}\sum_{i=1}^N |X^{i}_{\lfloor{Lt/T}\rfloor} - \bar X^i_t|^2\right] \leq 
    \mathsf c \exp({\mathsf c T^2} ) \left(\frac{\log L}{L} + \frac 1{ m^{1/4}}\right).
    $$
\end{proposition}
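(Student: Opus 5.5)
The plan is to split the error through an intermediate Gaussian (infinite-width) discrete scheme. Let $\tilde{\mathbf X}_\ell$ be the iteration \eqref{eq:transformer-update} with $G^m_{\ell+1}$ replaced by its Gaussian limit $G_{\ell+1}=\sum_k\sigma_k Z_k^{\ell+1}$. Here we set $Z_k^{\ell+1}:=\sqrt{L/T}\,(B^k_{(\ell+1)T/L}-B^k_{\ell T/L})$, so that $\tilde{\mathbf X}$ is driven by the same Brownian motions as $\bar{\mathbf X}$. We then bound separately:
\begin{itemize}
\item[(a)] the width error $\mathbb E\sup_\ell \frac1N\sum_i|X^i_\ell-\tilde X^i_\ell|^2\lesssim m^{-1}$;
\item[(b)] the depth error $\mathbb E\sup_t\frac1N\sum_i|\tilde X^i_{\lfloor Lt/T\rfloor}-\bar X^i_t|^2\lesssim \log L/L$.
\end{itemize}
The coupling is the layerwise conditional one of Remark~\ref{rem:global_coupling}. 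At step $\ell$, given $\mathcal F_{\ell T/L}$, we invoke Proposition~\ref{prop:basteri_trevisan} at the $N$ realized inputs $Y^i_\ell$. This gives $\mathbb E[\,|G^m_{\ell+1}(Y^i_\ell)-G_{\ell+1}(Y^i_\ell)|^2\mid\mathcal F_{\ell T/L}]\le \mathsf c_N/m$. For the $C^\infty$ case we instead use the functional bound of \cite[Thm.~3.16]{favaro2025quantitative}, with rate $m^{-1/4}$ uniform in $N$, and all remaining constants will be $N$-free.

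The core tool is a one-step expansion of $\LN$. For $x\in\mathbb S^{d-1}$ and $v$ small,
\begin{equation*}
\LN(x+v)=x+P_xv-\tfrac12|P_xv|^2x-\langle x,v\rangle P_xv+O(|v|^3).
\end{equation*}
Applying it with $v=\alpha_1\Attn+\alpha_2 G$ and composing the two half-steps, one Lie–Trotter layer equals
\begin{equation*}
X_{\ell+1}=X_\ell+\tfrac TL\bigl(P_{X_\ell}\Attn-\tfrac12|P_{X_\ell}G|^2X_\ell-\langle X_\ell,G\rangle P_{X_\ell}G\bigr)+\sqrt{\tfrac TL}\,P_{X_\ell}G+R_\ell,
\end{equation*}
with $\mathbb E|R_\ell|^2\lesssim L^{-3}$, using Gaussian moments of $G$. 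Because $|v|$ is unbounded, the Taylor remainder must be handled on the event $\{|v|\le1/2\}$ and the complement controlled by Gaussian tails. This is also where $\LN(v)$ with $v$ near $0$ has to be dealt with.

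Taking the conditional expectation of the quadratic terms gives
\begin{equation*}
\mathbb E\bigl[|P_xG|^2\bigr]=(d-1)\kiso(1),\qquad \mathbb E\bigl[\langle x,G\rangle P_xG\bigr]=0,
\end{equation*}
which recovers exactly the Itô drift $-\kiso\frac{d-1}2\bar X$ of \eqref{eq:dynamics_tokens}. The fluctuation of these quadratic terms around their means is a martingale increment of size $L^{-1}$. It therefore contributes $L\cdot L^{-2}=L^{-1}$ to the second moment by Doob's inequality.

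For (b), write $\bar X_{t_{\ell+1}}-\bar X_{t_\ell}$ through the SDE and subtract the expansion above. The differences split into three parts:
\begin{itemize}
\item drift differences, Lipschitz in $\tilde X-\bar X$ by Assumption~\ref{ass:att_reg}, where the $\W_2$ distance of empirical measures is bounded by the particle $\ell^2$ average;
\item the stochastic-integral difference $\int(\sigma_k(\tilde X_{t_\ell})P_{\tilde X_{t_\ell}}-\sigma_k(\bar X_s)P_{\bar X_s})dB^k_s$, controlled by BDG;
\item local consistency errors of strong order $1/2$, using $\sum_k|\sigma_k(x)-\sigma_k(y)|^2=2\kiso(1)-2\kiso(\langle x,y\rangle)\lesssim|x-y|^2$.
\end{itemize}
For the third part, $\kiso$ must be Lipschitz at $1$, which we derive from $\act$ Lipschitz via $\kiso'(1)=\frac1d\mathbb E[\act'^2]$. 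Gronwall then gives the bound at grid times, with $e^{\mathsf c T^2}$ arising from the $T$-dependence of the BDG/Cauchy–Schwarz constants. Passing to the continuous supremum adds $\mathbb E\sup_\ell\sup_{s\in[t_\ell,t_{\ell+1}]}|\bar X_s-\bar X_{t_\ell}|^2\lesssim \log L/L$, the Lévy-modulus maximum of $L$ Gaussian oscillations. This is the source of the logarithm.

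For (a), the two schemes share the drift structure. The difference propagates through $\LN$, which is Lipschitz away from $0$. The inputs at step $\ell$ are also different, $Y_\ell$ versus $\tilde Y_\ell$, and we handle this with the triangle inequality
\begin{equation*}
G^m(Y)-G(\tilde Y)=\bigl(G^m(Y)-G(Y)\bigr)+\bigl(G(Y)-G(\tilde Y)\bigr).
\end{equation*}
The second piece is controlled by the Lipschitz modulus in mean square from $\kiso$. This is a martingale-type recursion with increment variance $\frac TL(\mathsf c|e_\ell|^2+\mathsf c_N/m)$, and discrete Gronwall plus Doob gives $\lesssim m^{-1}$.

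The main obstacle is (a): the first-order term $\alpha_2(G^m-G)$ is of size $L^{-1/2}m^{-1/2}$ per step. We need the mean-zero and conditional-independence structure across layers so that these errors add in variance, not linearly. We also need the second-order $\LN$ terms built from $G^m$ to have conditional mean matching $(d-1)\kiso(1)$. The latter holds exactly because the covariance of $G^m$ is $m$-independent.
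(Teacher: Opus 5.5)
Your proposal is correct, but it routes the argument differently from the paper.

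\textbf{The paper's route.} It passes from the transformer $\mathbf X_\ell$ to an Euler--Maruyama scheme $\hat{\mathbf X}_\ell$ driven by $G_{\ell+1}$. Since $\hat{\mathbf X}_\ell$ leaves the sphere, this step needs the cutoff $\rho$, the map $T$, the ambient kernel and Lemma~\ref{lem:regularity_projected_coefficients}. It then passes to the continuous interpolation of $\hat{\mathbf X}_\ell$, which is the source of the $\log L/L$. Finally it reaches $\bar{\mathbf X}$ through the black-box strong Euler--Maruyama estimate of Proposition~\ref{prop:euler_maruyama}. The normalization is expanded only once (Lemma~\ref{lem:norm_res}) and compared with the Gaussian increments in Lemma~\ref{lem:strong_error}.

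\textbf{Your route.} You keep $\LN$ inside the intermediate object: $\tilde{\mathbf X}$ is the same Lie--Trotter scheme with $G^m$ replaced by $G$. This buys several simplifications.
\begin{itemize}
\item Every process stays on $\mathbb S^{d-1}$, so no regularized extension or ambient Karhunen--Lo\`eve expansion is needed.
\item Step (a) compares two schemes of identical structure. Coupling at $Y_\ell$ rather than at $X_\ell$ removes the paper's $\mathbf w-\mathbf w'$ correction.
\item Step (b) involves the Brownian filtration only.
\item The $\log L/L$ comes from the modulus of continuity of $\bar{\mathbf X}$ rather than of the interpolated scheme, via the same BDG-with-$p\sim\log L$ argument.
\end{itemize}
The price is that (b) is no longer a textbook Euler--Maruyama bound. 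You must redo a strong-order analysis of a projected, non-standard scheme directly against the SDE. That single Gr\"onwall argument has to combine the second-order expansion of $\LN$, the matching of the It\^o correction $-\kiso\frac{d-1}{2}x$, and the usual local-error terms. You also need the expansion for both schemes in (a).

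\textbf{Two points to tighten.}
\begin{itemize}
\item In (a), $\LN$ being Lipschitz away from $0$ is not what closes the recursion. Its local Lipschitz constant is $1+O(\sqrt{\Delta t})$ with a random part, and a $\min$-norm Lipschitz bound leaves an uncentered $O(\sqrt{\Delta t})$ factor that accumulates to an exponent of order $\sqrt L$ over $L$ steps. What closes it is the second-order expansion of both schemes. This splits $e_{\ell+1}-e_\ell$ into a predictable $O(\Delta t\,|e_\ell|)$ part, a martingale increment with conditional variance $\Delta t(|e_\ell|^2+\mathsf c_N/m)+O(\Delta t^2)$, and remainders. This is exactly the (A)/(B)/(C) split of Lemma~\ref{lem:strong_error}. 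It yields $m^{-1}+L^{-1}$ for (a) rather than $m^{-1}$, which is harmless for the final bound.
\item The bound $\kiso(1)-\kiso(u)\lesssim 1-u$ follows in one line from $\mathbb E|G(x)-G(y)|^2\le L_\act^2|x-y|^2$ (Lemma~\ref{lem:unif_gl}). You do not need the formula $\kiso'(1)=\frac1d\mathbb E[\act'^2]$, which the paper proves only under extra regularity of $\act$.
\end{itemize}
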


\subsection{Proof strategy}
\label{subsec:deep-wide-proof-strategy}

To compare the discrete transformer dynamics with the limiting SDE, we
introduce two intermediate objects. The first one is an auxiliary discrete-time
process driven by the limiting Gaussian field. More precisely, if
\((G_\ell)_{\ell\ge1}\) denotes the iid Gaussian field arising in the
infinite-width limit of the MLP and $\Delta t := T/L$, we define
\((\widehat {\mathbf X}_\ell)_{\ell=0,\dots,L}\) by
$
\widehat X_0^i=X_0^i,
$
for all $i\in[N]$ and
\begin{equ}\label{e:xhat}
\widehat X_{\ell+1}^i
=
\widehat X_\ell^i
+
\Delta t\,P_{\widehat X_\ell^i}
\Attn_{\widehat{\mathbf X}_\ell}(\widehat X_\ell^i)
+
\sqrt{\Delta t}\,
P_{\widehat X_\ell^i}G_{\ell+1}(\widehat X_\ell^i)
-
\Delta t\,\frac{d-1}{2}\kiso(1)\widehat X_\ell^i  \,.
\end{equ}
 Strictly speaking, this discrete scheme is defined using a suitably regularized
extension of the projection, the attention drift, and the radial correction to
a neighborhood of the sphere, as the latter is not preserved by \eqref{e:xhat} (more details on this extension in Section~\ref{sec:single-step}).

The second intermediate object is the continuous-time interpolation
\((\widehat {\mathbf X}_t)_{t\in[0,T]}\) of the auxiliary scheme. If
\(\ell_s:=\lfloor s/\Delta t\rfloor\), we define
\begin{equ}
\widehat X_t^i
=
X_0^i
+
\int_0^t
P_{\widehat X_{\ell_s}^i}
\Attn_{\widehat{\mathbf X}_{\ell_s}}(\widehat X_{\ell_s}^i)\,ds
-
\int_0^t
\frac{d-1}{2}\kiso(1)\widehat X_{\ell_s}^i\,ds
+
\sum_{k=1}^\infty
\int_0^t
P_{\widehat X_{\ell_s}^i}\sigma_k(\widehat X_{\ell_s}^i)\,dB_s^k .
\end{equ}
where the coefficients are again suitably extended to a neighborhood of the sphere and smoothly truncated outside.

The proof then proceeds via triangle inequality:
\begin{equ}
\frac 1 N \sum_{i=1}^N\vert X^{i}_{\lfloor t/\Delta t\rfloor}-\bar X_t^i\vert^2
\le
3\,\frac 1 N \sum_{i=1}^N\vert X^{i}_{\lfloor t/\Delta t\rfloor}-\widehat X^i_{\lfloor t/\Delta t\rfloor}\vert^2
+3\frac 1 N \sum_{i=1}^N\vert\widehat X^i_{\lfloor t/\Delta t\rfloor}-\widehat X^i_t\vert^2
+3\frac 1 N \sum_{i=1}^N\vert\widehat X^i_t-\bar X^i_t\vert^2.
\end{equ}
After taking the supremum over \(t\in[0,T]\) and expectation, the proof reduces
to estimating the three terms on the right-hand side separately.

\begin{enumerate}[(i)]
\item The terms  \(X^{m,L,i}_\ell-\widehat X^i_\ell\)
are estimated in  Sections~\ref{s:noise} and~\ref{sec:single-step}. More specifically, in Lemma~\ref{lem:strong_error} we show that there exists a constant
\(\mathsf c_N>0\), independent of \(m\), \(L\), and \(T\), such that
\begin{equ}\label{e:firstbound}
\mathbb E\left[\max_{0\le \ell\le L}
\frac 1 N \sum_{i=1}^N \vert X^{i}_\ell - \widehat X^i_\ell\vert^2\right]
\le
\mathsf c_N e^{\mathsf c_N T}
\left(\frac1m+\frac1L\right).
\end{equ}
The \(m^{-1}\) contribution comes from the quantitative coupling between the
finite-width MLP field \(G_\ell^m\) and its Gaussian limit \(G_\ell\), provided
by Proposition~\ref{prop:basteri_trevisan}. The \(L^{-1}\) contribution comes
from the second-order expansion of the two normalization maps. This remainder, denoted by $\mathcal R_\ell^i$, is computed in 
Lemma~\ref{lem:norm_res}  and controlled in 
Lemma~\ref{lem:squared_reminder}. These one-step bounds are
propagated over all layers by a discrete Gr\"onwall argument in
Lemma~\ref{lem:strong_error}. Bounding \eqref{e:firstbound} constitutes one of the main technical challenges of the proof.

\item The terms \(\widehat X^i_{\lfloor t/\Delta t\rfloor}-\widehat X^i_t\) are treated in Section~\ref{sec:strong-single-step}. In Lemma~\ref{lem:increment_delta} we show that there exists a
constant \(\mathsf c_T>0\), independent of \(N\), \(m\), and \(L\), such that
\begin{equ}
\mathbb E\left[\sup_{0\le t\le T}
\frac 1 N \sum_{i=1}^N\vert\widehat X^i_{\lfloor t/\Delta t\rfloor}-\widehat X^i_t\vert^2\right]
\le
\mathsf c_T\,\frac{\log L}L .
\end{equ}
This bound holds because the interpolated scheme has Euler-size increments on each time interval of length \(\Delta t\). This is a classical result extended to hold uniformly in $N$.

\item The terms  \(\widehat X_t^i-\bar X_t^i\) are also  controlled in Section~\ref{sec:strong-single-step}. Lemma~\ref{lem:strong_convergence_sde} shows that there exists a constant
\(\mathsf c_T>0\), independent of \(N\), \(m\), and \(L\), such that
\begin{equ}
\mathbb E\left[\sup_{0\le t\le T}
\frac 1 N \sum_{i=1}^N\vert\widehat X_t^i-\bar X_t^i\vert^2\right]
\le
\mathsf c_T\,\frac1L .
\end{equ}
Here, $\widehat{\mathbf{X}}_t = (\widehat X^i_t)_i$ is viewed as the Euler-Maruyama scheme
associated with the limiting SDE, and the estimate results from a classical Euler-Maruyama convergence bound in a compact subset of $\mathbb R^d$ (using regularity of the coefficients from Lemma
\ref{lem:regularity_projected_coefficients}) adapted to hold uniformly in $N$.
\end{enumerate}

Combining the three estimates above, there exists a constant \(\mathsf c_N>0\),
independent of \(m\), \(L\), and \(T\), such that
\begin{equ}
\mathbb E\left[\sup_{0\le t\le T}
\frac 1 N \sum_{i=1}^N \vert X^{i}_{\lfloor t/\Delta t\rfloor}-\bar X^i_t\vert^2\right]
\le
\mathsf c_N e^{\mathsf c_N T^2}
\left(\frac1m+\frac{\log L}L\right).
\end{equ}
This is the estimate of Proposition~\ref{prop:quantitative_deep_limit}.
Moreover, under the stronger functional-level coupling estimate discussed in
Remark~\ref{r:uniform}, the dependence on \(N\) of the constant from \eqref{e:firstbound}  can be
removed, at the price of replacing the width rate \(m^{-1}\) by the slower
rate \(m^{-1/4}\) in \eqref{e:firstbound} and, consequently, in the above formula.

\subsection{Noise structure and wide limit computations}
\label{s:noise}

The first step in our analysis is to identify the covariance structure of the
random MLP block.
Fix \(x,y\in\mathbb R^d\). Writing \(U_r^\ell x\) for the product of the
\(r\)-th row of \(U^\ell\) with \(x\), we have, for \(j,k\le d\),
\begin{equ}
G_\ell^m(x)_j
=
\frac{1}{\sqrt m}\sum_{r=1}^m
W_{jr}^\ell
\act\left(d^{-1/2}U_r^\ell x+\biasU{\ell}_r\right)
+
\biasW{\ell}_j .
\end{equ}
Using the independence and centering of the entries of \(W^\ell\), together with the
independence of \(W^\ell\), \(U^\ell\), \(\biasU{\ell}\), and
\(\biasW{\ell}\), we compute
\begin{align*}
\E\left[G_\ell^m(x)_jG_\ell^m(y)_k\right]
&=
\frac{1}{m}
\sum_{r,s=1}^m
\E\left[
W_{jr}^\ell W_{ks}^\ell
\act\left(d^{-1/2}U_r^\ell x+\biasU{\ell}_r\right)
\act\left(d^{-1/2}U_s^\ell y+\biasU{\ell}_s\right)
\right] \\
&\qquad
+
\E\left[\biasW{\ell}_j\biasW{\ell}_k\right] \\
&=
\delta_{jk}
\E\left[
\act\left(d^{-1/2}U_1^\ell x+\biasU{\ell}_1\right)
\act\left(d^{-1/2}U_1^\ell y+\biasU{\ell}_1\right)
\right]
+
\delta_{jk}\varbW^2 .
\end{align*}
The pre-activation pair
\begin{equ}
\left(
d^{-1/2}U_1^\ell x+\biasU{\ell}_1,
d^{-1/2}U_1^\ell y+\biasU{\ell}_1
\right)
\end{equ}
is a centered $2$-dimensional Gaussian with covariance matrix
\begin{equ}
    \label{eq:covariance}
\Sigma(x,y)
=
\begin{pmatrix}
d^{-1}|x|^2+\varbU^2
&
d^{-1}\langle x,y\rangle+\varbU^2
\\
d^{-1}\langle x,y\rangle+\varbU^2
&
d^{-1}|y|^2+\varbU^2
\end{pmatrix}.
\end{equ}
We therefore define the neural-network covariance kernel in the ambient space $\mathbb R^d$ as 
\begin{equ}
\K_{\mathrm{amb}}(x,y)
:=
\E\left[\act(V_1)\act(V_2)\right]+\varbW^2,
\end{equ}
where \((V_1,V_2)\) is a centered Gaussian vector with covariance matrix
\(\Sigma(x,y)\). With this notation,
\begin{equ}
\label{eq:kernel-Rd}
\E\left[G_\ell^m(x)G_\ell^m(y)^\top\right]
=
\K_{\mathrm{amb}}(x,y)\Id .
\end{equ}
In particular, the covariance of \(G_\ell^m\) is independent of \(m\).

We define \(G_\ell\) to be the centered \(\mathbb R^d\)-valued Gaussian field
with the same covariance:
\begin{equ}
\E\left[G_\ell(x)G_\ell(y)^\top\right]
=
\K_{\mathrm{amb}}(x,y)\Id,
\qquad x,y\in\mathbb R^d.
\end{equ}
Thus \(G_\ell\) is the canonical ambient infinite-width Gaussian field
associated with the given MLP initialization.

Since \(\K_{\mathrm{amb}}\) is continuous and the components of the Gaussian field $G_\ell$ are independent, that field admits
local Karhunen-Lo\`eve decompositions on compact subsets of \(\mathbb R^d\).
In particular, by \cite{Adler2007Chapter3}, on every fixed ball \(B(0,R)\subset\mathbb R^d\), there exist
scalar-valued coefficients \((\sigma_k)_{k\ge1}\) such that on $B(0,R)$
\begin{equ}
G_\ell(x)=\sum_{k=1}^{\infty}\sigma_k(x)Z_k^\ell,
\end{equ}
where the \(Z_k^\ell\) are independent, $d$-dimensional standard Gaussian vectors, and for $x,y\in B(0,R)$
\begin{equ}
\sum_{k=1}^{\infty}\sigma_k(x)\sigma_k(y)
=
\K_{\mathrm{amb}}(x,y).
\end{equ}

We now explain how this ambient construction relates to the Gaussian field
introduced in Section~\ref{sec:intro}. If \(x,y\in\mathbb S^{d-1}\), then
\(|x|=|y|=1\), so the covariance matrix \(\Sigma(x,y)\) depends only on
\(\langle x,y\rangle\). Therefore the restriction $K$ of \(\K_{\mathrm{amb}}\) to
\(\mathbb S^{d-1}\) is zonal: for $x,y\in\mathbb S^{d-1}$
\begin{equ}
\K(x,y)=\kiso(\langle x,y\rangle)
\end{equ}
yielding the structure appearing in
\eqref{eq:zonal-kernel}. Consequently,  the Gaussian field introduced in
Section~\ref{sec:intro} is simply the restriction of the ambient field
\(G_\ell\) to the sphere.

In this case, the Karhunen-Loève expansion becomes explicit: as the zonal kernel \(\kiso(\langle x,y\rangle)\) admits a
Sch\"onberg-Gegenbauer expansion
\begin{equ}
\label{eq:gegenbauer_coefficient}
\kiso(\langle x,y\rangle)
=
\sum_{n=0}^\infty
\frac{C_n N_{n,d}}{\omega_{d-1}}
P_{n,d}(\langle x,y\rangle),
\end{equ}
where \(N_{n,d}\) is the dimension of the space of spherical harmonics of
degree \(n\) in dimension \(d\),
\begin{equ}
N_{n,d}
=
\binom{n+d-1}{n}
-
\binom{n+d-3}{n-2},
\end{equ}
\(C_n\) is the \(n\)-th Sch\"onberg-Gegenbauer coefficient of $\kiso$,
\(\omega_{d-1}\) is the surface volume of \(\mathbb S^{d-1}\), and \(P_{n,d}\)
is the normalized Gegenbauer polynomial (i.e., so that $P_{n,d}(1) = 1$). By the addition theorem for spherical harmonics $Y_{n,i}$, we finally have
\begin{equ}
\kiso(\langle x,y\rangle)
=
\sum_{n=0}^\infty\sum_{i = 1}^{N_{n,d}}
C_n Y_{n,i}(x)Y_{n,i}(y),
\end{equ}
yielding an explicit expression for the coefficients $\sigma_k$ in \eqref{eq:KL_expansion}.

\begin{remark}
    \label{r:schonberg}
    We note that the zonal covariance kernel $\kiso~:~[-1,1]\to \R$ is positive definite in the sense of \cite{schoenberg}\footnote{i.e., its Gram matrix $\kiso(\langle x_i, x_j\rangle)_{i,j}$ is positive semidefinite} so that the coefficients $C_n$ in, e.g., \eqref{eq:gegenbauer_coefficient} are all nonnegative.
\end{remark}

The following lemma collects the estimates on \(G_\ell^m\), \(G_\ell\), and
the coefficients \((\sigma_k)_{k\ge1}\) that will be used later.

\begin{lemma}
\label{lem:unif_gl}
 For every \(p\ge1\) there
exists a constant
\(\mathsf c>0\) such that for all \(m\in\mathbb N\), all \(\ell\), and all
\(x,y\in B(0,3)\),
\begin{equ}
\E\left[|G_\ell(x)|^p\right]
+
\E\left[|G_\ell^m(x)|^p\right]
\le
\mathsf c,
\end{equ}
\begin{equ}
\E\left[|G_\ell^m(x)-G_\ell^m(y)|^2\right]
+
\E\left[|G_\ell(x)-G_\ell(y)|^2\right]
\le
\mathsf c |x-y|^2,
\end{equ}
and, recalling \eqref{e:sigmaka}, for every $\alpha \in [d]$
\begin{equs}
&  \sum_{k=1}^{\infty}|\sigma_k(x)|^2 = \sum_{k=1}^{\infty}|\sigma_{k\alpha}(x)|^2 
\le
\mathsf c,\\
& \sum_{k=1}^{\infty}|\sigma_k(x)-\sigma_k(y)|^2 = \sum_{k=1}^{\infty}|\sigma_{k\alpha}(x)-\sigma_{k\alpha}(y)|^2
\le
\mathsf c |x-y|^2.
\end{equs}
\end{lemma}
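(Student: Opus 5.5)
The plan is to reduce all four estimates to properties of the kernel $\K_{\mathrm{amb}}$ on $B(0,3)$, using that $\act$ is $L_\act$-Lipschitz (Assumption~\ref{ass1}), so $|\act(z)|\le |\act(0)|+L_\act|z|$.

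\emph{Moments.} The Gaussian field comes first. $G_\ell(x)$ is a centered Gaussian vector with covariance $\K_{\mathrm{amb}}(x,x)\Id$, so $\E|G_\ell(x)|^p\le \mathsf c_{p,d}\,\K_{\mathrm{amb}}(x,x)^{p/2}$. Moreover $\K_{\mathrm{amb}}(x,x)=\E[\act(V_1)^2]+\varbW^2\le 2\act(0)^2+2L_\act^2(|x|^2/d+\varbU^2)+\varbW^2$, which is bounded on $B(0,3)$.

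For the finite-width field, I would condition on $(U^\ell,\biasU{\ell})$. Conditionally, each coordinate $G^m_\ell(x)_j$ is Gaussian with variance $\frac1m\sum_r\act(d^{-1/2}U^\ell_rx+\biasU{\ell}_r)^2+\varbW^2$. Hence
\[
\E|G^m_\ell(x)|^p\le \mathsf c\,\E\Big[\Big(\tfrac1m\textstyle\sum_r \act(\cdot)^2\Big)^{p/2}\Big]+\mathsf c .
\]
By Jensen (for $p\ge2$, convexity of $s\mapsto s^{p/2}$ applied to the average), the first term is at most $\mathsf c\,\E|\act(d^{-1/2}U_1x+b)|^p$, which is uniformly bounded in $m$ and in $x\in B(0,3)$ by the Lipschitz growth bound and Gaussian moments. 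The case $p<2$ then follows from Hölder.

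\emph{Increments.} Again I use conditional Gaussianity:
\[
\E|G_\ell^m(x)-G_\ell^m(y)|^2 = d\,\E\big[(\act(d^{-1/2}U_1x+b)-\act(d^{-1/2}U_1y+b))^2\big],
\]
since the bias $\biasW{\ell}$ cancels and the cross terms between distinct $r$ vanish. This is at most $dL_\act^2 d^{-1}\E|U_1(x-y)|^2 = L_\act^2 d\,|x-y|^2$. The limiting field $G_\ell$ has the same covariance, hence the same second moment of increments, so the same bound holds for it.

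\emph{Karhunen--Loève coefficients.} By the expansion $\sum_k\sigma_k(x)\sigma_k(y)=\K_{\mathrm{amb}}(x,y)$ on $B(0,3)$ (choosing $R=3$ in the local decomposition), we get
\[
\sum_k\sigma_k(x)^2=\K_{\mathrm{amb}}(x,x)\le\mathsf c,
\]
and
\[
\sum_k(\sigma_k(x)-\sigma_k(y))^2=\K_{\mathrm{amb}}(x,x)-2\K_{\mathrm{amb}}(x,y)+\K_{\mathrm{amb}}(y,y)=\tfrac1d\E|G_\ell(x)-G_\ell(y)|^2\le \mathsf c|x-y|^2 .
\]
Since $\sigma_{k\alpha}=\sigma_k e_\alpha$, we have $|\sigma_{k\alpha}|=|\sigma_k|$, which gives the stated equalities.

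The only delicate point is justifying the pointwise (not merely $L^2$) identity $\sum_k\sigma_k(x)\sigma_k(y)=\K_{\mathrm{amb}}(x,y)$. This follows from Mercer's theorem, since $\K_{\mathrm{amb}}$ is continuous on the compact set $\overline{B(0,3)}$. Continuity of $\K_{\mathrm{amb}}$ itself follows from the Lipschitz increment bound above.
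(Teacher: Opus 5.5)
Your proposal is correct and follows essentially the same route as the paper. You use conditional Gaussianity given the first-layer weights and bias plus Jensen for the moments, the Lipschitz bound on $\act$ for the increments (transferred to $G_\ell$ via equality of covariances), and the Karhunen--Lo\`eve identity $\sum_k\sigma_k(x)\sigma_k(y)=\K_{\mathrm{amb}}(x,y)$ for the coefficient bounds. One small slip: since $\E|U_1(x-y)|^2=|x-y|^2$, the increment bound is $L_\act^2|x-y|^2$ rather than $L_\act^2 d\,|x-y|^2$, though your larger constant is still a valid upper bound.
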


\begin{proof}
Since \(x\in B(0,3)\), the pre-activation \(d^{-1/2}U_1^\ell x+\biasU{\ell}_1\)
is Gaussian with variance bounded by \(9d^{-1}+\varbU^2\). Because \(\act\) is
Lipschitz, it has at most linear growth, hence all moments of the corresponding
activation are uniformly bounded on \(B(0,3)\).

For the finite-width field, conditioning on the pre-activation vector
\begin{equ}
a(x)=\act\left(d^{-1/2}U^\ell x+\biasU{\ell}\right)\in\mathbb R^m,
\end{equ}
the vector \(G_\ell^m(x)\) is Gaussian with independent coordinates and
variance \(m^{-1}|a(x)|^2+\varbW^2\). Hence, for $p\geq 2$
\begin{equ}
\E\bigl[\E\bigl[|G_\ell^m(x)|^p\,\big|\,U^\ell,\biasU{\ell}\bigr]\bigl]
 \le
 \mathsf c_{p,d} \E\bigl[\bigl(m^{-1}|a|^2+\varbW^2\bigr)^{p/2}\bigr]
\le
\mathsf c_{p,d}\left(m^{-1}\sum_{j = 1}^m\E[|a(x)_j|^p] + \varbW^p\right),
\end{equ}
while the case $p \in [1,2)$ can be reduced to $p = 2$ by H\"older inequality. 
The uniform (in $x$ and $j$) moment bounds on the entries
of \(a(x)\) therefore give
\begin{equ}
\sup_{m\ge1}\sup_{x\in B(0,3)}\E\left[|G_\ell^m(x)|^p\right]\le \mathsf c_p.
\end{equ}
The same bound for \(G_\ell(x)\) follows from the boundedness of
\(\K_{\mathrm{amb}}(x,x)\) on compact sets.

For the increment estimate,
by independence and centering of \(W^\ell\), all cross terms vanish and
\begin{equs}
\E\left[|G_\ell^m(x)-G_\ell^m(y)|^2\right]
& = 
d\,\E\left[\left(\act\left(d^{-1/2}U_1^\ell x+\biasU{\ell}_1\right)
-
\act\left(d^{-1/2}U_1^\ell y+\biasU{\ell}_1\right)\right)^2\right]\\
& \leq d L_\act^2\E\left[\left(d^{-1/2}U_1^\ell(x-y)\right)^2\right]\\
&= L_\act^2 |x-y|^2.
\end{equs}
where in the second line we used the Lipschitz continuity of \(\act\).

Since \(G_\ell\) has the same covariance as \(G_\ell^m\), the same identity
gives
\begin{equ}
\E\left[|G_\ell(x)-G_\ell(y)|^2\right]
=
\E\left[|G_\ell^m(x)-G_\ell^m(y)|^2\right]
\le
L_\act^2|x-y|^2.
\end{equ}

Finally, on \(B(0,3)\), the coefficient estimates follow from the local
Karhunen-Lo\`eve representation. For every fixed \(\alpha\in[d]\),
\begin{equ}
G_\ell(x)_\alpha=\sum_{k=1}^{\infty}\sigma_k(x)Z_{k,\alpha}^\ell,
\end{equ}
where the \(Z_{k,\alpha}^\ell\) are independent standard Gaussians. Therefore,
\begin{equ}
\sum_{k=1}^{\infty}|\sigma_k(x)|^2
=
\E\left[|G_\ell(x)_\alpha|^2\right] = \frac 1 d\E\left[|G_\ell(x)|^2\right] \leq \mathsf c ,
\end{equ}
and 
\begin{equ}
\sum_{k=1}^{\infty}|\sigma_k(x)-\sigma_k(y)|^2
=
\E\left[|G_\ell(x)_\alpha-G_\ell(y)_\alpha|^2\right] = \frac 1 d \E\left[|G_\ell(x)-G_\ell(y)|^2\right]\leq \mathsf c |x-y|^2. 
\end{equ}
Since $|\sigma_{k\alpha}(x)| = |\sigma_{k}(x)|$ and $|\sigma_{k\alpha}(x) - \sigma_{k\alpha}(y)| = |\sigma_{k}(x)-\sigma_{k}(y)|$  this proves the lemma.
\end{proof}

The following proposition builds on the main result from \cite{basteri_trevisan} and provides a bound on the difference between the MLP $G^m$ and the limiting field $G$ which is uniform on the input $x \in \S^{d-1}$.

\begin{proposition}
\label{prop:basteri_trevisan}
For any $N \in \mathbb N$ there exists a constant $\mathsf c_{N}> 0$ 
%not depending on $m,L$ 
such that for all $\mathbf x = (x_1, \dots, x_N) \in (\S^{d-1})^{N}$ and all $m,L \in \mathbb N$ there exists a coupling $\Gamma$ between $G_\ell^m$ and $G_\ell$ such that we have: 
\begin{equ}
 \sup_{i \in [N]} \mathbb{E}_\Gamma \left[|G^m_\ell(x_i)-G_\ell(x_i) |^2\right] \leq \mathsf c_{N} \frac 1 m\,.
\end{equ} 
Assuming further that $\act \in C^\infty(\R)$ we have (with a potentially different coupling $\Gamma$):
\begin{equ} \sup_{i \in [N]} \mathbb{E}_\Gamma \left[|G^m_\ell(x_i)-G_\ell(x_i) |^2\right] \leq \mathsf c \frac 1 {m^{1/4}}\,.
\end{equ}
\end{proposition}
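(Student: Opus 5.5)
The plan is to reduce the statement to a finite-dimensional quantitative central limit theorem. Fix $\mathbf x=(x_1,\dots,x_N)$ and consider the random vector
\begin{equ}
\mathcal G^m := \bigl(G_\ell^m(x_1),\dots,G_\ell^m(x_N)\bigr)\in\R^{dN},
\end{equ}
together with its Gaussian counterpart $\mathcal G:=(G_\ell(x_i))_{i\le N}$. Both vectors have the same covariance matrix $\mathsf K:=(\K(x_i,x_j))_{i,j}\otimes \Id$. Since the $L^2$ coupling quantity $\sup_i\E_\Gamma|G_\ell^m(x_i)-G_\ell(x_i)|^2$ is bounded by $\E_\Gamma|\mathcal G^m-\mathcal G|^2$, it suffices to show $\W_2^2(\mathrm{Law}(\mathcal G^m),\mathcal N(0,\mathsf K))\le \mathsf c_N/m$. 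An optimal coupling of the two laws can then be extended, by gluing, to a coupling of the whole fields $G_\ell^m$ and $G_\ell$; take the regular conditional law of each field given its values at $\mathbf x$.

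\textbf{Step 1: i.i.d.\ sum structure.} Write $\mathcal G^m = \mathcal B + m^{-1/2}\sum_{r=1}^m \xi_r$. Here $\mathcal B=(\biasW{\ell})_{i\le N}$ is an exact Gaussian, independent of everything else, and it can be coupled identically on both sides. The summands are
\begin{equ}
\xi_r=\bigl(W^\ell_{\cdot r}\,\act(d^{-1/2}U^\ell_r x_i+\biasU{\ell}_r)\bigr)_{i\le N},
\end{equ}
which are i.i.d., centered, and have covariance $\mathsf K-\varbW^2\mathbf 1\mathbf 1^\top\otimes\Id$. Their moments of every order are bounded uniformly on the sphere, since $\act$ is Lipschitz and hence of linear growth, exactly as in Lemma~\ref{lem:unif_gl}.

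\textbf{Step 2: quantitative CLT.} Next I would apply the result of \cite{basteri_trevisan}, namely a $\W_2$ CLT for one-hidden-layer networks at finitely many inputs. Equivalently, one can use a Bonis/Zhai-type $\W_2$ CLT for normalized i.i.d.\ sums with finite fourth moments. This gives a rate of order $m^{-1/2}$ in $\W_2$, hence $m^{-1}$ in $\W_2^2$, with a constant depending on the dimension $dN$ and on the moments of $\xi_1$.

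\textbf{Main obstacle: uniformity over $\mathbf x$.} Those bounds typically degrade when the covariance is degenerate, or they involve $\|\mathsf K^{-1}\|$, and $\mathsf K$ becomes singular as points coalesce. I would first try a dimension-free-in-conditioning version of the CLT: Bonis' $\W_2$ bound requires only finite fourth moments and gives $\mathsf c\,\sqrt{dN}\,\E|\xi|^4$-type control without inverting the covariance. If that fails, I would project onto the range of $\mathsf K$. Non-polynomiality of $\act$ (Assumption~\ref{ass1}) guarantees that $\mathsf K$ is nondegenerate for distinct, non-antipodal points. The residual component is handled by the increment bound of Lemma~\ref{lem:unif_gl}, which controls nearly coincident points by $|x_i-x_j|^2$. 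Compactness of $(\S^{d-1})^N$ then yields a uniform $\mathsf c_N$.

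\textbf{Smooth case.} For $\act\in C^\infty$ I would instead invoke the functional quantitative CLT \cite[Theorem 3.16]{favaro2025quantitative}. It couples $G_\ell^m$ and $G_\ell$ as random elements of a Sobolev space embedded in $C(\S^{d-1})$, with $\E\|G^m_\ell-G_\ell\|_\infty^2\le \mathsf c\, m^{-1/4}$. Restricting this sup-norm bound to the points $x_i$ gives the second estimate, with $\mathsf c$ independent of $N$ and of $\mathbf x$.
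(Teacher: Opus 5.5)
\textbf{Comparison with the paper's route.} Your route differs from the paper's. The paper does not use a general i.i.d.\ CLT. Instead it conditions on the hidden activations $a_i=\act(d^{-1/2}U^\ell x_i+\biasU{\ell})$. Given $a$, the vector $(G^m_\ell(x_i)-\biasW{\ell})_i$ is exactly Gaussian with covariance $\hat K\otimes\Id$, where $\hat K=(\frac1m\langle a_i,a_j\rangle)_{ij}$. The paper then combines $\W_2^2(\mathcal N(0,\Sigma_1),\mathcal N(0,\Sigma_2))\le\|\Sigma_1^{1/2}-\Sigma_2^{1/2}\|_F^2$ with the Basteri--Trevisan estimate comparing $\hat K$ to $\mathbf K_0$. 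This exploits the Gaussian output weights and needs only moment bounds on $a$; your i.i.d.-sum CLT would instead extend to non-Gaussian $W^\ell$. Your smooth case is the paper's argument.

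\textbf{The gap: uniformity in $\mathbf x$.} You flag this issue but do not close it.
\begin{itemize}
\item Bonis-type $\W_2$ bounds are formulated for isotropic summands. For covariance $\mathsf K$ you must whiten, and the whitened fourth moments, hence the constant, in general blow up as $\lambda_{\min}(\mathsf K)\to0$, i.e.\ as points coalesce.
\item The compactness fallback fails for the same reason. The map $\mathbf x\mapsto\mathsf c(\mathbf x)$ is finite off the degenerate set but unbounded near it, so compactness of $(\S^{d-1})^N$ gives nothing. The constant obtained by projecting onto the range at an exactly degenerate configuration controls nothing nearby.
\item The increment bound of Lemma~\ref{lem:unif_gl} merges $x_i,x_j$ at cost $\mathsf c|x_i-x_j|^2$. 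This is $O(1/m)$ only if $|x_i-x_j|\lesssim m^{-1/2}$. Configurations with separations between $m^{-1/2}$ and $O(1)$ remain, and there the CLT constant grows like a negative power of $\lambda_{\min}(\mathsf K)$, i.e.\ like a positive power of $m$.
\end{itemize}
As written, you obtain $\mathsf c(\mathbf x)/m$, not $\mathsf c_N/m$.

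\textbf{The paper does not close this gap either.} It infers uniformity from the uniform moments of the pre-activations. However, any deterministic bound $\|\hat K^{1/2}-\mathbf K_0^{1/2}\|_F\le C\|\hat K-\mathbf K_0\|_F$ must have $C\gtrsim\lambda_{\min}(\mathbf K_0)^{-1/2}$; to see this, perturb along the bottom eigenvector. So moment bounds alone cannot give an $\mathbf x$-independent constant at rate $1/m$.

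What the conditional-Gaussian route does give uniformly is a slower rate. By the Powers--St{\o}rmer inequality, for positive semidefinite $N\times N$ matrices $A,B$ (with $\|\cdot\|_1$ the trace norm),
\[
\|A^{1/2}-B^{1/2}\|_F^2\le\|A-B\|_1\le\sqrt N\,\|A-B\|_F .
\]
This yields $\sup_i\E_\Gamma|G^m_\ell(x_i)-G_\ell(x_i)|^2\le\mathsf c_N m^{-1/2}$, with $\mathsf c_N$ independent of $\mathbf x\in(\S^{d-1})^N$. Reaching a uniform $1/m$ would require a finer spectral perturbation argument. You would need to show that the fluctuations of $\hat K$ along the near-null directions of $\mathbf K_0$ are correspondingly small.
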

\begin{proof}It is enough to prove the bounds with the infimum. Indeed, if
\begin{equ}
\inf_\Gamma \sup_{i \in [N]} \E_\Gamma \left[|G^m_\ell(x_i)-G_\ell(x_i)|^2\right]
\leq \mathsf c_N \frac1m,
\end{equ}
then there exists a coupling \(\Gamma\) such that
\begin{equ}
\sup_{i \in [N]} \E_\Gamma \left[|G^m_\ell(x_i)-G_\ell(x_i)|^2\right]
\leq 2\mathsf c_N \frac1m.
\end{equ}
Renaming the constant, this yields the desired statement (also for the $N$-independent constant).

We start with the first bound. For readability, we omit the dependence on \(\ell\). For \(i\in[N]\), let
\begin{equ}
g(x_i):=\frac{1}{\sqrt m}W\act\left(\frac{1}{\sqrt d}Ux_i+b_U\right),
\qquad
G'(x_i):=G(x_i)-b_W,
\end{equ}
so that
\begin{equ}
G^m(x_i)=g(x_i)+b_W,
\qquad
G(x_i)=G'(x_i)+b_W,
\end{equ}
with the same independent Gaussian bias \(b_W\sim\mathcal N(0,\varbW^2\Id)\).
Thus, it is enough to couple the vectors
\begin{equ}
\mathbf g:=\bigl(g(x_1),\dots,g(x_N)\bigr)\in(\mathbb R^d)^N,
\qquad
\mathbf G':=\bigl(G'(x_1),\dots,G'(x_N)\bigr)\in(\mathbb R^d)^N.
\end{equ}
To this end, set
\begin{equ}
a_i:=\act\left(\frac{1}{\sqrt d}Ux_i+b_U\right)\in\mathbb R^m,
\qquad i\in[N].
\end{equ}
Conditioned on \((a_1,\dots,a_N)\), the vector \(\mathbf g\) is centered Gaussian
in \((\mathbb R^d)^N\) with block covariance
\begin{equ}
\Sigma_{\mathbf g\mid a}
=
\left(\frac1m\langle a_i,a_j\rangle \Id\right)_{i,j=1}^N,
\end{equ}
while \(\mathbf G'\) is centered Gaussian with covariance
\begin{equ}
\Sigma_{\mathbf G'}
=
\left((\mathbf K_0)_{ij} \Id\right)_{i,j=1}^N \qquad \text{where}\qquad (\mathbf K_0)_{ij} := \E[\act(U_1 x_i/\sqrt d + b)\act(U_1 x_j/\sqrt d + b)].
\end{equ}
for $U_1 \sim \mathcal N(0, \Id)$ and $b \sim \mathcal N(0,\sigma_U^2)$.

Recall from \cite{villani_transport, trevisan} that for Gaussian variables \(Z_1 \sim \mathcal N(\mu_1,\Sigma_1)\) and
\(Z_2 \sim \mathcal N(\mu_2,\Sigma_2)\) in \(\mathbb R^M\),
\begin{equ}
\label{eqaux_gaussian}
    \W_2(Z_1,Z_2) \le \|\mu_1-\mu_2\| + \|\sqrt{\Sigma_1}-\sqrt{\Sigma_2}\|_F\,,
\end{equ}
where $\|\cdot\|_F$ denotes the Frobenius norm.
 Applying \eqref{eqaux_gaussian} with
\(M=dN\) and \(\mu_1=\mu_2=0\), we obtain
\begin{equ}
\label{eqaux_bt_N}
\W_2^2(\Law(\mathbf g\mid a),\Law(\mathbf G'))
\le
\bigl\|\Sigma_{\mathbf g\mid a}^{1/2}-\Sigma_{\mathbf G'}^{1/2}\bigr\|_F^2 .
\end{equ}

We now check that the right-hand side of \eqref{eqaux_bt_N} is uniformly bounded for
\(x_1,\dots,x_N\in\S^{d-1}\). 
Since each \(x_i\in\S^{d-1}\), all pre-activations have the same variance
\(d^{-1}+\varbU^2\), so all moments entering the estimate are uniform in the
choice of the points. In particular, the dependence of the constant is only on
\(N\), not on the positions of the \(x_i\) on the sphere. Applying
\cite[Lemma 3.4]{basteri_trevisan} to the empirical Gram matrix
\begin{equ}
\left(\frac1m\langle a_i,a_j\rangle\right)_{i,j=1}^N
\end{equ}
and its limit \(\mathbf K_0\), integrating on the activations we obtain
\begin{equ}
\label{eqaux_bt_3}
\E\!\left[\bigl\|\Sigma_{\mathbf g\mid a}^{1/2}-\Sigma_{\mathbf G'}^{1/2}\bigr\|_F^2\right]
\le \frac{\mathsf c_N}{m},
\end{equ}
for a constant \(\mathsf c_N>0\) depending only on \(N\), \(d\), \(\act\),
\(\varbU\), and \(\varbW\) (and not on $\{x_i\}$).

Regarding the second statement, we recall the definition of the $C^0(\mathbb{ U})$ norm from \cite[Equation 3.17]{favaro2025quantitative}: for a vector-valued function $f=(f_1,\dots,f_d)$,
$$\|f\|_{C^0( \mathbb{U})}=\max_{i\in[d]}\max_{x\in\mathbb{\bar U}}|f_i(x)|$$ 
so that, setting $\mathbb{U} = \S^{d-1}$\,,
\begin{equs}
\inf_\Gamma \sup_{i\in[N]}\E_\Gamma[|G_\ell^m(x_i)-G_\ell(x_i)|^2] & \leq \inf_\Gamma \sup_{x \in \S^{d-1}}\E_\Gamma[|G_\ell^m(x)-G_\ell(x)|^2]\\
& \leq d \inf_\Gamma \E_\Gamma[ \|G_\ell^m-G_\ell\|_{C^0(\mathbb{S}^{d-1})}^2].
\end{equs}
Since the activation function $\act\in C^\infty(\R)$ by assumption, by defining the metric
$$W_{\infty,k}(X,Y)=\left(\inf_\Gamma \E_\Gamma[\|X-Y\|_{C^k(\mathbb{U})}^2]\right)^{1/2}\,,$$
we bound  the RHS of the above as
$$\inf_\Gamma \sup_{i\in[N]}\E_\Gamma[|G_\ell^m(x_i)-G_\ell(x_i)|^2] \leq d W_{\infty,0}(G_\ell^m,G_\ell)^2 \leq \mathsf c \frac1{m^{1/4}},$$
where  the last inequality follows directly from \cite[Theorem 3.16]{favaro2025quantitative}, proving our claim.
\end{proof}

With the above result at hand, we are finally ready to couple the stochastic processes appearing in our analysis.
\begin{remark}
\label{rem:adapted_coupling_convention}
From now on all estimates involving the finite-width fields \(G_\ell^m\), the limiting Gaussian field \(G_\ell\)
and the Brownian motions \(B^k_t\) are understood under the following adapted
construction.

Fix \(m,L,N\in\mathbb N\), set \(\Delta t=T/L\), and let
\(t_\ell=\ell\Delta t\). We first choose independent \(\mathbb R^d\)-valued
Brownian motions \((B^k)_{k\ge1}\). For every layer \(\ell=0,\dots,L-1\), we
define the limiting Gaussian field associated with the Brownian increment by
\begin{equ}
G_{\ell+1}(x)
:=
\sum_{k=1}^{\infty}\sigma_k(x)
\frac{B^k_{t_{\ell+1}}-B^k_{t_\ell}}{\sqrt{\Delta t}},
\qquad x\in \mathbb S^{d-1}.
\end{equ}
Then \(G_{\ell+1}\) has the same law as the infinite-width Gaussian field
introduced in Section~\ref{s:noise}. 

We now construct the fields \(G_{\ell}^m\) and the natural filtration $(\mathcal F_\ell)_\ell$ recursively over
\(\ell \in \{1,\dots,L\}\). We start by defining
\(\mathcal F_0 := \sigma(\mathbf X_0)\). Assume that, up to layer \(\ell\),
the fields \((G_r,G_r^m)_{1\le r\le \ell}\) and the token configuration
\(\mathbf X_\ell\) have been constructed, and set
\begin{equ}
\mathcal F_\ell
:=
\sigma\!\left(
\mathbf X_0,\,(G_r,G_r^m)_{1\le r\le \ell}
\right).
\end{equ}
so that  \(\mathbf X_\ell\) is \(\mathcal F_\ell\)-measurable. 
The finite-width field \(G_{\ell+1}^m\) is then chosen conditionally on $\mathcal F_\ell$ and on \(G_{\ell+1}\) using the 
coupling of Proposition~\ref{prop:basteri_trevisan}, applied to the (conditionally)
deterministic tuple \(\mathbf x = \mathbf X_\ell\). Equivalently, given such a coupling and disintegrating with respect to the second marginal, we sample
\(G_{\ell+1}^m\) conditionally on \(G_{\ell+1}\) and \(\mathbf X_\ell\).
We then define \(\mathbf X_{\ell+1}\) by the transformer update and enlarge the
filtration by setting
\begin{equ}
\mathcal F_{\ell+1}
:=
\sigma\!\left(
\mathcal F_\ell,G_{\ell+1},G_{\ell+1}^m
\right).
\end{equ}

Thus, with respect to the natural layer filtration \((\mathcal F_\ell)_\ell\),
the fields have the prescribed marginals and satisfy, for every
\(\ell=0,\dots,L-1\),
\begin{equ}
\mathbb E\left[
\frac1N\sum_{i=1}^N
\left|
G_{\ell+1}^m(X_\ell^i)-G_{\ell+1}(X_\ell^i)
\right|^2
\,\middle|\,\mathcal F_\ell
\right]
\le
\frac{\mathsf c_N}{m}
\qquad\text{a.s.}
\end{equ}
If \(\act\in C^\infty(\mathbb R)\), the same convention is used with the
functional-level coupling estimate, giving instead
\begin{equ}
\mathbb E\left[
\frac1N\sum_{i=1}^N
\left|
G_{\ell+1}^m(X_\ell^i)-G_{\ell+1}(X_\ell^i)
\right|^2
\,\middle|\,\mathcal F_\ell
\right]
\le
\frac{\mathsf c}{m^{1/4}}
\qquad\text{a.s.},
\end{equ}
with \(\mathsf c\) independent of \(N\). All expectations below are taken with
respect to this joint adapted realization.
\end{remark}

\subsection{Reduction to single-step dynamics}
\label{sec:single-step}

In this section we reduce the dynamics in \eqref{eq:transformer-update} to a single-step update, specifically a classical Euler-Maruyama scheme as in \eqref{e:xhat}. As the resulting discrete dynamics does not preserve $\mathbb S^{d-1}$ almost surely, even if $\hat {\mathbf X}_0 \in (\S^{d-1})^N$, the first step in this process is to define suitable, sufficiently regular extensions to $\mathbb R^d$ of the operators defining our original dynamics.

We introduce the cutoff function $\rho(x) \coloneqq \varphi(|x|)$, where $\varphi:\R_{\ge 0}\to\R$ is a smooth mollifier satisfying:
\begin{equation*}
\varphi(r) \coloneqq
\begin{cases}
1 & \text{if } r \leq 3/2,\\
0 & \text{if } r \geq 2.
\end{cases}
\end{equation*}
We also define a smooth interpolation function $T:\R^d\to \R^d$ such that:
\begin{equation*}
T(x) = 
\begin{cases}
    x & \text{if } x\in B(0,2),\\
    3x/|x| & \text{if } x\in B(0,3)^c,
\end{cases}
\end{equation*}
and, by abuse of notation, we write $T(x_1,\dots,x_N) \coloneqq (T(x_1),\dots,T(x_N))$. We then replace the empirical vector field $\attN{}{x}$ with its regularized counterpart: 
\begin{equation*}
\rattN{}{x} \coloneqq \attN{}{T(x)}.
\end{equation*}

Finally,  to ensure notational consistency across the particle dynamics, we distinguish between the true drift $b$ (defined on $\mathbb{S}^{d-1}$) and the regularized drift $b^\rho$ (defined on $\mathbb{R}^d$) evaluated for a particle $x^i$ interacting with the system state $\mathbf x \in (\mathbb{R}^d)^N$:
\begin{align}
  \rproj{x^i} &\coloneqq \rho(x^i)\proj{x^i}, \\
    \driftN{}{x}&\coloneqq \proj{x^i} \attN{}{x} - \kiso \frac{d-1}{2} x^i, \\
    \rdriftN{}{x} &\coloneqq \rproj{x^i} \rattN{}{x} - \rho(x^i)^2 K_{\text{amb}}(x^i,x^i) \frac{d-1}{2} x^i.
\end{align}

We now proceed to ensure that both the drift and diffusion terms in our idealized dynamics remain well-behaved everywhere. 
\begin{lemma}
\label{lem:regularity_projected_coefficients}
For every $N \in \mathbb N$ there exists a constant\footnote{depending only on \(d\) and the
constants in Assumption~\ref{ass:att_reg}} \(\mathsf c>0\) such that for every
\(\mathbf x,\mathbf y\in(\mathbb R^d)^N\),
\begin{equ}
\frac1N\sum_{i=1}^N
|\rdriftN{}{\mathbf x}-\rdriftN{}{\mathbf y}|^2
\le
\mathsf c\,
\frac1N\sum_{i=1}^N |x^i-y^i|^2,
\end{equ}
and for every $x,y \in \mathbb R^d$
\begin{equ}
\sum_{k=1}^{\infty} |\rproj{x}\sigma_k(x)|^2 \le \mathsf c,
\qquad
\sum_{k=1}^{\infty} |\rproj{x}\sigma_k(x)-\rproj{y}\sigma_k(y)|^2
\le
\mathsf c |x-y|^2.
\end{equ}
\end{lemma}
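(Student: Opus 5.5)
The plan is to reduce everything to products of bounded, Lipschitz functions supported in the ball $B(0,2)$, where $\rho$ vanishes outside. Throughout I use the elementary fact that if $f,g$ are bounded by $M$ and $\mathsf c$-Lipschitz on $\mathbb R^d$, then $fg$ is bounded by $M^2$ and $2M\mathsf c$-Lipschitz. I apply this coordinatewise, or in operator norm for matrix-valued factors.

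\textbf{Diffusion coefficients.} Since $\rho$ is smooth and supported in $B(0,2)$, the map $x\mapsto \rproj{x}=\rho(x)(\Id - xx^\top)$ is smooth, compactly supported, bounded and Lipschitz on $\mathbb R^d$ in operator norm. For $x\in B(0,2)$, Lemma~\ref{lem:unif_gl} gives $\sum_k|\sigma_k(x)|^2\le\mathsf c$, which yields the first bound. For the increment, I split
\[
\rproj{x}\sigma_k(x)-\rproj{y}\sigma_k(y)=\rproj{x}\bigl(\sigma_k(x)-\sigma_k(y)\bigr)+\bigl(\rproj{x}-\rproj{y}\bigr)\sigma_k(y).
\]
This requires some care with the supports. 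If both $x,y\in B(0,3)$, summing the squares over $k$ and using the two bounds of Lemma~\ref{lem:unif_gl} (with $\sigma_k$ defined on $B(0,3)$ via the local Karhunen--Lo\`eve expansion) gives $\mathsf c|x-y|^2$. If $x\in B(0,2)$ and $y\notin B(0,3)$, or vice versa, then $|x-y|\ge1$ and the left side is bounded by $4\mathsf c\le 4\mathsf c|x-y|^2$. If both lie outside $B(0,2)$, the expression vanishes. In this last case $\sigma_k$ need not even be defined, since it only appears multiplied by zero.

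\textbf{Drift, radial correction.} The term $\rho(x)^2\K_{\mathrm{amb}}(x,x)\frac{d-1}2x$ depends on one particle only. I show that $x\mapsto \K_{\mathrm{amb}}(x,x)$ is Lipschitz on $B(0,2)$ via
\[
|\K_{\mathrm{amb}}(x,x)-\K_{\mathrm{amb}}(y,y)|=\bigl|\E|G(x)_1|^2-\E|G(y)_1|^2\bigr|\le \bigl(\E|G(x)_1-G(y)_1|^2\bigr)^{1/2}\bigl(\E|G(x)_1+G(y)_1|^2\bigr)^{1/2},
\]
which is at most $\mathsf c|x-y|$ by Lemma~\ref{lem:unif_gl}. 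Multiplying by the smooth, compactly supported factor $\rho^2 x$ then gives a globally Lipschitz function.

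\textbf{Drift, attention.} I write $\rattN{}{x}=\Attn(\mu_{T(\mathbf x)},T(x^i))$, where $\mu_{T(\mathbf x)}$ is the empirical measure of $T(\mathbf x)$, supported in $B(0,3)$. Since $T$ is smooth and equal to $3x/|x|$ outside $B(0,3)$, it is globally Lipschitz. Assumption~\ref{ass:att_reg} with $R=3$ then gives
\[
|\rattN{}{x}-\rattN{}{y}|\le \mathsf c|T(x^i)-T(y^i)|+\mathsf c\,\W_2(\mu_{T(\mathbf x)},\mu_{T(\mathbf y)}).
\]
Using the diagonal coupling, $\W_2^2(\mu_{T(\mathbf x)},\mu_{T(\mathbf y)})\le\frac1N\sum_j|T(x^j)-T(y^j)|^2\le \mathsf c\frac1N\sum_j|x^j-y^j|^2$. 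The attention field is also uniformly bounded by Assumption~\ref{ass:att_reg}. Combining with the boundedness and Lipschitz property of $\rproj{\cdot}$, I get
\[
|\rdriftN{}{\mathbf x}-\rdriftN{}{\mathbf y}|^2\le \mathsf c\Bigl(|x^i-y^i|^2+\frac1N\sum_j|x^j-y^j|^2\Bigr).
\]
Averaging over $i$ yields the claim, with a constant independent of $N$.

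The main obstacle is the bookkeeping of domains. The coefficients $\sigma_k$ and $\K_{\mathrm{amb}}$ are only controlled on $B(0,3)$, so every estimate must use the cutoff $\rho$ to avoid needing them elsewhere. This is handled by the case split above.
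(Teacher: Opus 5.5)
Your proposal is correct and follows essentially the same route as the paper's proof in Appendix~\ref{s:pl24}: triangle-inequality splits using the boundedness and Lipschitz continuity of $\rho$, $T$ and $x\mapsto\rho(x)P_x$, Assumption~\ref{ass:att_reg} together with the diagonal coupling for the attention term, and Lemma~\ref{lem:unif_gl} on $B(0,3)$ with a case split on supports for the diffusion coefficients. Your version is slightly tidier in two places. Using that the cutoff already vanishes outside $B(0,2)$ gives the gap $|x-y|\ge 1$ directly. You also prove the Lipschitz continuity of $x\mapsto K_{\mathrm{amb}}(x,x)$ via Cauchy--Schwarz, which the paper only asserts.
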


\begin{proof}
The proof combines triangle inequality with the boundedness and Lipschitz continuity of $\rho, T$ and $P$ (in operator norm) to the regularity in $B(0,3)$ of the underlying
drift and diffusion operators derived in Lemma~\ref{lem:unif_gl} and required in Assumption~\ref{ass:att_reg}. To improve readability, this technical proof is deferred to Appendix~\ref{s:pl24}.
\end{proof}

We now compare the true discrete transformer update \eqref{eq:transformer-update} with the idealized single-step Euler-Maruyama dynamics $\hat{\mathbf{X}}$ given by:
\begin{equ}
\label{eq:dynamics_hat}
\begin{cases}
    \hat{X}^i_{\ell+1} = \hat{X}^i_{\ell} + \dt\, \rdriftN{\ell}{\hat{X}} + \sqrt{\dt}\, \rproj{\hat{X}^i_\ell} G_{\ell+1}(\hat{X}^i_\ell), \\
    \hat{X}^{i}_0 = X^{i}_0.
\end{cases}
\end{equ}

The remainder of this subsection is devoted to the proof of the following result. Recall that all expectations are taken on the coupling defined in  Remark~\ref{rem:adapted_coupling_convention}.

\begin{lemma}
\label{lem:strong_error}
    Under Assumptions \ref{ass:att_reg} and \ref{ass1}, for every $N\in \mathbb{N}$, there exists a constant $\cN > 0$ such that for every $m, L \in \mathbb N$ and $T>0$:
    \begin{equ}
        \E\left[\sup_{0\leq k\leq L} \frac{1}{N}\sum_{i=1}^N |{X}_k^i-\hat{{X}}^i_k|^2\right] \leq \cN \left(\frac{1}{L}+\frac{1}{m}\right) e^{\cN T}.
    \end{equ}
Assuming further that $\act \in C^\infty(\R)$ there exists a $m, L, N$-independent constant $\mathsf c$ such that 
        \begin{equ}
       \E\left[\sup_{0\leq k\leq L} \frac{1}{N}\sum_{i=1}^N |{X}_k^i-\hat{{X}}^i_k|^2\right] \leq \mathsf c \left(\frac{1}{L}+\frac{1}{m^{1/4}}\right) e^{\mathsf c T}.
    \end{equ}
\end{lemma}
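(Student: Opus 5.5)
We write the transformer update as a perturbed version of the Euler--Maruyama scheme \eqref{eq:dynamics_hat} and then run a discrete Gr\"onwall argument on the error $e_\ell^i := X_\ell^i-\hat X_\ell^i$.

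\textbf{Step 1: expand the normalizations.} Write $\Delta t = T/L$. For $x\in\S^{d-1}$ and a small tangent-plus-normal perturbation $h$, Taylor's formula gives
\begin{equ}
\LN(x+h) = x + P_x h + \tfrac12\big(-|P_xh|^2 x - 2\langle x,h\rangle P_x h\big) + O(|h|^3).
\end{equ}
We apply this to the attention step with $h = \Delta t\,\Attn$, which is of size $O(\Delta t)$. We apply it to the MLP step with $h=\sqrt{\Delta t}\,G^m_{\ell+1}(Y^i_\ell)$, which is of size $O(\sqrt{\Delta t})$ in all moments by Lemma~\ref{lem:unif_gl}. This yields
\begin{equ}
X^i_{\ell+1} = X^i_\ell + \Delta t\, P_{X^i_\ell}\Attn_{\vX_\ell}(X^i_\ell) + \sqrt{\Delta t}\,P_{X^i_\ell}G^m_{\ell+1}(X^i_\ell) - \tfrac{\Delta t}{2}\,|P_{X^i_\ell}G^m_{\ell+1}(X^i_\ell)|^2 X^i_\ell + \mathcal R^i_\ell .
\end{equ}
We control $\mathcal R_\ell^i$ as follows:
\begin{itemize}
\item Replacing $Y^i_\ell$ by $X^i_\ell$ inside $G^m$ and $P$ costs $\sqrt{\Delta t}\,|Y-X|\lesssim \Delta t^{3/2}$, using the $L^2$-Lipschitz bound of Lemma~\ref{lem:unif_gl}. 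Since $G^m$ is not pathwise Lipschitz, we use instead that $G^m$ is Lipschitz conditionally on $U^\ell$, with random constant $\|U^\ell\|/\sqrt{dm}$, which has uniformly bounded moments.
\item The cubic terms are of order $\Delta t^{3/2}$ by the moment bounds.
\item A centering issue arises because $G^m_{\ell+1}$ is independent of $\mathcal F_\ell$ but evaluated at $Y_\ell$, which is $\mathcal F_\ell$-measurable. Hence $\E[G^m_{\ell+1}(Y^i_\ell)\mid\mathcal F_\ell]=0$, and the martingale parts are genuinely centered.
\end{itemize}
The Itô-type correction term $\tfrac{\Delta t}{2}|P_xG^m|^2x$ is then split as
\begin{equ}
\tfrac{\Delta t}{2}\Big(|P_xG^m|^2 - \E\big[|P_xG^m|^2\big]\Big)x + \tfrac{\Delta t}{2}(d-1)\kiso(1)\,x ,
\end{equ}
since $\E|P_xG(x)|^2=(d-1)\kiso(1)$ on the sphere. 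The first part is a centered fluctuation of size $\Delta t$, and therefore a martingale increment of variance $\Delta t^2$. I would collect all these contributions into Lemma~\ref{lem:norm_res} and Lemma~\ref{lem:squared_reminder}, in the form $\mathcal R_\ell^i = M_\ell^i + D_\ell^i$ with
\begin{equ}
\E[M_\ell^i\mid\mathcal F_\ell]=0,\qquad \E|M_\ell^i|^2\le \mathsf c\,\Delta t^2,\qquad \E|D_\ell^i|^2\le \mathsf c\,\Delta t^3 .
\end{equ}
On the sphere $\rho=1$, $T=\mathrm{id}$ and $K_{\rm amb}(x,x)=\kiso(1)$, so the remaining terms match $b^\rho$ exactly.

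\textbf{Step 2: error recursion.} Subtracting \eqref{eq:dynamics_hat} gives
\begin{equ}
e_{\ell+1}^i = e_\ell^i + \Delta t\,\big(b^\rho(X_\ell)-b^\rho(\hat X_\ell)\big)^i + \sqrt{\Delta t}\,\big[P^\rho_{X}G^m(X)-P^\rho_{\hat X}G(\hat X)\big] + \mathcal R^i_\ell .
\end{equ}
We split the bracket as
\begin{equ}
P_X\big(G^m(X)-G(X)\big) + \big(P_XG(X)-P^\rho_{\hat X}G(\hat X)\big).
\end{equ}
The first piece is controlled by Remark~\ref{rem:adapted_coupling_convention} and has conditional second moment $\le \mathsf c_N/m$ (or $\mathsf c\,m^{-1/4}$ in the $C^\infty$ case). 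The second piece has conditional second moment $\le \mathsf c\,|e_\ell|^2$: writing $G$ through the Brownian increments, this follows from Lemma~\ref{lem:regularity_projected_coefficients}. Both pieces are conditionally centered.

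Summing the recursion, the error $e_k$ decomposes into a drift sum, a martingale sum, and $\sum D$. We then apply Doob's $L^2$ maximal inequality to the martingale part and Cauchy--Schwarz to the drift and $D$ sums. Using the Lipschitz bound of Lemma~\ref{lem:regularity_projected_coefficients} and $\E|D|^2\lesssim\Delta t^3$, this gives
\begin{equ}
\E\Big[\sup_{j\le k}\tfrac1N\textstyle\sum_i|e^i_j|^2\Big] \le \mathsf c\,(T+1)\Delta t\sum_{j<k}\E\Big[\sup_{r\le j}\tfrac1N\textstyle\sum_i|e^i_r|^2\Big] + \mathsf c\,L\Big(\tfrac{\Delta t}{m}+\Delta t^2\Big) + \mathsf c\, T^2 L\,\Delta t^2 .
\end{equ}
The right-hand side is at most $\mathsf c_T\big(\tfrac1m+\tfrac1L\big)$. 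The discrete Gr\"onwall lemma then yields the claim with factor $e^{\mathsf c_N T}$, up to absorbing polynomial factors in $T$ into the exponential. The $N$-dependence of the constant enters only through the coupling constant, so the $C^\infty$ version follows identically.

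\textbf{Main obstacle.} The hardest part is Step 1, namely proving that the normalization remainders have the martingale-plus-$O(\Delta t^{3/2})$ structure uniformly in all moments. Three difficulties combine here:
\begin{itemize}
\item $|v|$ may be small, so the expansion of $\LN$ is not uniformly valid.
\item $G^m$ is only Lipschitz in $L^2$, not pathwise.
\item The Gaussian tails must be handled in the region where $|\sqrt{\Delta t}\,G^m|$ is not small.
\end{itemize}
I would first handle the event $\{\sqrt{\Delta t}\,|G^m_{\ell+1}(Y^i_\ell)|>1/2\}$ separately. Since both $X_\ell^i$ and $\hat X_\ell^i$ stay bounded (the latter by the cutoff), the error on this event is $O(1)$, and the event has probability $O(\Delta t^p)$ for every $p$ by Markov's inequality and the moment bounds. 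Off this event, the Taylor expansion of $\LN$ is valid with explicit bounds on $|v|^{-1}$.
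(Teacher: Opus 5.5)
Your argument is sound and is essentially the paper's. The shared steps are: a Taylor expansion of both normalizations with an $O(\Delta t^{3/2})$ remainder in $L^2$ (conditioning on $U^{\ell+1}$ to handle $G^m_{\ell+1}(Y^i_\ell)-G^m_{\ell+1}(X^i_\ell)$, and treating the large-noise event separately), replacement of $G^m$ by $G$ through the adapted coupling, a drift/martingale split controlled by Cauchy--Schwarz and Doob, and discrete Gr\"onwall. The one difference is bookkeeping: you fold the centered second-order fluctuations and the cross term $\Delta t\,\langle X,G^m\rangle P_XG^m$ into a martingale remainder of variance $O(\Delta t^2)$, whereas the paper keeps them explicit via the auxiliary increment $\Delta\tilde X$. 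Exactly as in the paper's own proof, your Cauchy--Schwarz bound on the drift sum gives a Gr\"onwall coefficient of order $T\Delta t$, so the argument really yields $e^{\mathsf c T^2}$ rather than $e^{\mathsf c T}$ (this is not a polynomial factor you can absorb). That weaker bound is still all that Proposition~\ref{prop:quantitative_deep_limit} uses.
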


In preparation for the proof of the above result, we summarize in Lemma~\ref{lem:norm_res} below a convenient decomposition of the 2-step update of the original transformer dynamics \eqref{eq:transformer-update} as a single-step dynamics plus a controlled remainder term. 

\begin{lemma}
\label{lem:norm_res}
Under the previous assumptions, if $\dt \leq (2\mathsf c)^{-1}$:
\begin{align*}
X^i_{\ell+1} &= X^i_{\ell} + \dt P_{X^{i}_\ell}\attN{\ell}{X} + {\sqrt{\dt}} P_{X^i_{\ell}} G_{\ell+1}^m(X^i_\ell) \\
&\quad - \frac{{\dt}}{2}\left| P_{X^i_{\ell}} G_{\ell+1}^m(X^i_\ell) \right|^2 X^i_{\ell} - {\dt} ((X^i_{\ell})^\top G_{\ell+1}^m(X^i_\ell)) P_{X^i_{\ell}} G_{\ell+1}^m(X^i_\ell) + \mathcal{R}_\ell^i,
\end{align*}
where the total remainder $\mathcal{R}_\ell^i$ is bounded by:
\begin{align}
\label{eq:bound_Ri}
|\mathcal{R}^i_\ell| \leq   12\mathsf c\dt^2 + \mathsf c|\mathbf{w}|^3  + 2|\mathbf{w}| \mathsf c \dt + \frac{9}{2}|\mathbf{w}|^2 \mathsf c \dt +  (1 + 3|\mathbf{w}|)|\mathbf{w} - \mathbf{w}'| + \frac{3}{2}|\mathbf{w} - \mathbf{w}'|^2,
\end{align}
with the noise increments defined as $\mathbf{w} := \sqrt{\dt}G_{\ell+1}^m (Y^i_{\ell})$, $\mathbf{w}' := \sqrt{\dt}G_{\ell+1}^m (X^i_{\ell})$
\end{lemma}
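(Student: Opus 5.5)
The plan is a purely deterministic, pathwise computation: expand each of the two normalization maps in \eqref{eq:transformer-update} to the needed order, then re-centre all terms at $X^i_\ell$. Throughout, write $x:=X^i_\ell\in\S^{d-1}$ and $a:=\attN{\ell}{X}$. By Assumption~\ref{ass:att_reg} we have $|a|\le\mathsf c$.

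\emph{Step 1 (attention half-step).} Since $|x|=1$, we have $|x+\dt a|^2=1+2\dt\langle x,a\rangle+\dt^2|a|^2$. The condition $\dt\le(2\mathsf c)^{-1}$ keeps this quantity in $[1/4,9/4]$, so a first-order Taylor expansion of $s\mapsto(1+s)^{-1/2}$ with explicit remainder applies. It gives
\[
Y^i_\ell=x+\dt P_x a+r_1,\qquad |r_1|\le \mathsf c\,\dt^2 .
\]
Here $\dt P_x a=\dt(a-\langle a,x\rangle x)$ comes from the linear term, and everything else is $O(\dt^2)$ with a constant such as $12\mathsf c$ after bounding crudely. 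A consequence we keep is $|Y^i_\ell-x|\le 2\mathsf c\,\dt$.

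\emph{Step 2 (MLP half-step).} Write $y:=Y^i_\ell\in\S^{d-1}$ and $\mathbf w=\sqrt{\dt}\,G^m_{\ell+1}(y)$. We have
\[
\LN(y+\mathbf w)=(y+\mathbf w)(1+s)^{-1/2},\qquad s=2\langle y,\mathbf w\rangle+|\mathbf w|^2 .
\]
The goal is the second-order identity
\[
\LN(y+\mathbf w)=y+P_y\mathbf w-\tfrac12|P_y\mathbf w|^2y-\langle y,\mathbf w\rangle P_y\mathbf w+r_2,\qquad |r_2|\le\mathsf c|\mathbf w|^3 .
\]
This comes from expanding $(1+s)^{-1/2}=1-\tfrac s2+\tfrac38 s^2+O(s^3)$, multiplying by $y+\mathbf w$, and regrouping with $|\mathbf w|^2=|P_y\mathbf w|^2+\langle y,\mathbf w\rangle^2$.

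I expect the main obstacle to be that $\mathbf w$ is Gaussian and hence unbounded, so the Taylor bound only holds for $|\mathbf w|\le\tfrac14$, say. I would handle $|\mathbf w|>\tfrac14$ separately. There, $|\LN(y+\mathbf w)|=1$, and the explicit polynomial terms are bounded by $1+|\mathbf w|+\tfrac32|\mathbf w|^2$. Since $1\le 64|\mathbf w|^3$ and similarly for the lower powers, the whole difference is bounded by $\mathsf c|\mathbf w|^3$ after enlarging $\mathsf c$. This yields a single global cubic bound, and no cutoff is needed.

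\emph{Step 3 (re-centring at $x$).} Substituting Step 1 into Step 2, I will replace $y$ by $x$ and $\mathbf w$ by $\mathbf w'=\sqrt{\dt}\,G^m_{\ell+1}(x)$ in every term.
\begin{itemize}
\item The leading $y$ gives $x+\dt P_x a+r_1$.
\item In $P_y\mathbf w-P_x\mathbf w'$, use $\|P_y-P_x\|\le 2|y-x|\le 4\mathsf c\dt$ and $\|P_x\|\le1$. This produces terms of size $2|\mathbf w|\mathsf c\dt$ (with the appropriate constant) plus $|\mathbf w-\mathbf w'|$.
\item For the quadratic terms, write each as a bilinear form evaluated at $(y,\mathbf w)$ versus $(x,\mathbf w')$ and telescope. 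The change of base point costs $\mathsf c\dt|\mathbf w|^2$ (giving the $\tfrac92$ coefficient). The change of argument costs $3|\mathbf w||\mathbf w-\mathbf w'|+\tfrac32|\mathbf w-\mathbf w'|^2$, using $|\mathbf w'|\le|\mathbf w|+|\mathbf w-\mathbf w'|$.
\end{itemize}
Collecting $r_1$, $r_2$ and these differences into $\mathcal R^i_\ell$ gives \eqref{eq:bound_Ri}, up to bookkeeping of the numerical constants. The structure of the estimate does not depend on those particular values.
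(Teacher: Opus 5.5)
Your proposal is correct and follows essentially the paper's route. You expand the attention normalization to first order and the MLP normalization to second order. You use the same split $|\mathbf w|\le 1/4$ versus $|\mathbf w|>1/4$ to obtain a global cubic remainder. You then change the base point $Y^i_\ell\to X^i_\ell$ and the noise $\mathbf w\to\mathbf w'$ with the same telescoping bounds. The only difference is cosmetic: you obtain the Taylor remainders from the scalar expansion of $(1+s)^{-1/2}$, whereas the paper bounds the second and third derivatives of the normalization map directly.
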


\begin{proof}[Proof of Lemma~\ref{lem:norm_res}]
    This proof, resulting from a careful Taylor expansion of the original layer update, is deferred to Appendix~\ref{app:taylor}.
\end{proof}

We proceed to prove the main estimate of this section:

\begin{proof}[Proof of Lemma~\ref{lem:strong_error}]
To prove the main result of this section, we analyze the total error by tracking the one-step increments $\Delta X_j^i = X_{j+1}^i - X_j^i$ and $\Delta \hat{X}_j^i = \hat{X}_{j+1}^i - \hat{X}_j^i$. By applying Lemma \ref{lem:norm_res} to expand the layer normalization, the increment of the true particles is given by:
\begin{align*}
    \Delta X_j^i &= \dt \proj{X^i_j}\attN{j}{X} + \sqrt{\dt} \proj{X^i_j} G_{j+1}^m (X^i_j) \\
    &\quad - \frac{\dt}{2}\left| \proj{X^i_j} G_{j+1}^m (X^i_j) \right|^2 X^i_j - \dt \left((X^i_j)^\top G_{j+1}^m (X^i_j)\right) \proj{X^i_j} G_{j+1}^m (X^i_j) + \mathcal{R}^i_j.
\end{align*}
To bridge the gap between this discrete update and the idealized continuous dynamics, we define an auxiliary increment $\Delta \tilde{X}_j^i$ that replaces the finite-width neural network noise $G^m$ with the exact isotropic Gaussian process $G$:
\begin{align*}
    \Delta \tilde{X}_j^i &:= \dt \proj{X^i_j}\attN{j}{X} + \sqrt{\dt} \proj{X^i_j} G_{j+1} (X^i_j) \\
    &\quad - \frac{\dt}{2}\left| \proj{X^i_j} G_{j+1} (X^i_j) \right|^2 X^i_j - \dt \left((X^i_j)^\top G_{j+1} (X^i_j)\right) \proj{X^i_j} G_{j+1} (X^i_j).
\end{align*}

We recall from Remark~\ref{rem:adapted_coupling_convention} the definition of the filtration \((\mathcal F_j)_{j=0,\dots,L}\), namely
\begin{equ}
\mathcal F_j:=\sigma\!\left(\mathbf{X}_0,\,(G_r^m,G_r)_{1\le r\le j}\right),
\end{equ}
and that \(\mathbf X_j\) is \(\mathcal F_j\)-measurable. We further recall that, conditionally on $\mathcal F_j$, with $x=X_j^i$, the variables $G_{j+1}^m(x)$ and $G_{j+1}(x)$ are centered and have the same covariance $\K(x,x)\Id$.

Using the inequality $(a+b+c)^2 \le 3a^2 + 3b^2 + 3c^2$, we decompose the cumulative expected supremum $Z_{\ell+1}$ into three terms:
\begin{align}
    Z_{\ell+1} &:= \E\left[\sup_{0\leq k\leq \ell+1} \frac{1}{N}\sum_{i=1}^N \left|\sum_{j=0}^{k-1} (\Delta X_j^i - \Delta \hat{X}_j^i)\right|^2\right] \nonumber \\
    &\leq 3\underbrace{\E\left[\sup_{0\leq k\leq \ell+1} \frac{1}{N}\sum_{i=1}^N \left|\sum_{j=0}^{k-1} (\Delta \tilde{X}_j^i - \Delta \hat{X}_j^i)\right|^2 \right]}_{\text{(A) }} \nonumber \\
    &\quad + 3\underbrace{\E\left[\sup_{0\leq k\leq \ell+1} \frac{1}{N}\sum_{i=1}^N \left|\sum_{j=0}^{k-1} (\Delta X_j^i - \Delta \tilde{X}_j^i - \mathcal{R}_j^i)\right|^2 \right]}_{\text{(B) }} \nonumber \\
    &\quad + 3\underbrace{\E\left[\sup_{0\leq k\leq \ell+1} \frac{1}{N}\sum_{i=1}^N \left|\sum_{j=0}^{k-1} \mathcal{R}_j^i\right|^2 \right]}_{\text{(C) }}.
\end{align}
We now proceed to bound the three terms on the RHS separately. \\[10pt]
\textbf{Bounding (C):} Using the Cauchy-Schwarz inequality, we can bound the cumulative Taylor remainder as follows
\begin{equation*}
    (C) \leq L \sum_{j=0}^L \frac{1}{N}\sum_{i=1}^N \E\left[|\mathcal{R}_j^i|^2\right] \leq L (L \mathsf c \dt^3) \leq \mathsf c T^2 \dt,
\end{equation*}
where the second inequality is justified in Lemma \ref{lem:squared_reminder} below to improve readability.\\[10pt]
\noindent \textbf{Bounding (B):} 
By definition of the filtration \((\mathcal F_j)_{j=0,\dots,L}\), 
we note that, since $G_{j+1}^m(x)$ and $G_{j+1}(x)$ are both centered with the same covariance, for any $x \in \S^{d-1}$ we have:
\begin{equ}
\E[\proj{x}(G_{j+1}^m(x)-G_{j+1}(x))\vert \mathcal F_j]=0,\qquad
\E\left[| \proj{x}G_{j+1}^m(x)|^2-| \proj{x}G_{j+1}(x)|^2 \,\vert \mathcal F_j\right]=0,
\end{equ}
and
\begin{equ}
\E\!\left[(x^\top G_{j+1}^m(x))\proj{x}G_{j+1}^m(x) \,\vert \mathcal F_j\right]
=\proj{x}\E[G_{j+1}^m(x)(G_{j+1}^m(x))^\top\vert \mathcal F_j]x
=\K(x,x)\proj{x}x=0,
\end{equ}
with the same identity for $G_{j+1}(x)$. Therefore, defining  $W_j^i := \Delta X_j^i - \Delta \tilde{X}_j^i - \mathcal{R}_j^i$, we get $\E[W_j^i\mid \mathcal F_j]=0$, so $(W_j^i)_j$ is a martingale difference sequence with respect to the layer filtration $\mathcal F_j$. By Doob's maximal inequality:
\begin{align*}
    (B) &\leq \frac{4}{N}\sum_{i=1}^N \E\left[ \left|\sum_{j=0}^{\ell} W_j^i\right|^2 \right] = \frac{4}{N}\sum_{i=1}^N \sum_{j=0}^{\ell} \E\left[|W_j^i|^2\right] \\
    &\leq \mathsf c \dt \sum_{j=0}^{\ell} \frac{1}{N}\sum_{i=1}^N \E\left[\E\left[|G_{j+1}^m(X_j^i) - G_{j+1}(X_j^i)|^2|\mathcal F_j\right]\right] + O(\dt^2) \\
    &\leq \mathsf c_N L \dt \left(\frac{1}{m} + \dt\right) \leq \mathsf c_N T \left(\frac{1}{m} + \dt\right),
\end{align*}
where we utilized the coupling between $G$ and $G^m$ from Proposition \ref{prop:basteri_trevisan},
yielding a constant $\mathsf c_N$ depending on $N$.

Under the further assumption that $\act$ is of class $C^\infty$ on its domain, the second statement in Proposition \ref{prop:basteri_trevisan} can be used to obtain a bound uniform in $N$:
\begin{equ}
\label{eqaux_uniform_B}
    (B) \leq\mathsf c T \left(\frac{1}{m^{1/4}} + \dt\right).
\end{equ}
\textbf{Bounding (A):} We further split $\Delta \tilde{X}_j^i - \Delta \hat{X}_j^i$ into a predictable conditional drift $D_j^i$ and a martingale difference $M_j^i$:
\begin{equation*}
    D_j^i := \E[\Delta \tilde{X}_j^i - \Delta \hat{X}_j^i \mid \mathcal{F}_j] \quad \text{and} \quad M_j^i := \Delta \tilde{X}_j^i - \Delta \hat{X}_j^i - D_j^i.
\end{equation*}
For the exact continuous dynamics, $\E[G_{j+1}(x) ] = 0$. Furthermore, utilizing the isotropic property of the kernel ($\K(x,x) = \kiso \Id$), we evaluate the conditional expectations of the correction terms:
\begin{align*}
    \E\left[| \proj{x} G_{j+1}(x) |^2 \vert \mathcal F_j\right] &= \Tr(\proj{x}\K(x,x)) = \kiso \Tr(I - x x^\top) = \kiso(d-1), \\
    \E\left[(x^\top G_{j+1}(x)) \proj{x} G_{j+1}(x)\vert \mathcal F_j \right] &= \proj{x}\K(x,x) x = \kiso(I - x x^\top) x = 0.
\end{align*}
Substituting these into the expectation of $\Delta \tilde{X}_j^i$, the correction precisely yields the drift term $-\kiso (d-1) X_j^i/2$. Since the true particles $X_j^i$ lie strictly on the sphere $\S^{d-1}$, the cutoff function evaluates to $\rho(X_j^i) = 1$, implying the true drift and regularized drift coincide ($\driftNinline{j}{X} = \rdriftNinline{j}{X}$). Thus:
\begin{equation*}
    D_j^i = \dt\left( \rdriftN{j}{X} - \rdriftN{j}{\hat{X}} \right).
\end{equation*}
Using Cauchy-Schwarz and the uniform Lipschitz property of the regularized drift (Lemma \ref{lem:regularity_projected_coefficients}):
\begin{align*}
    \E\left[\sup_{0\leq k\leq \ell+1} \frac{1}{N}\sum_{i=1}^N \left|\sum_{j=0}^{k-1} D_j^i\right|^2\right] &\leq L \sum_{j=0}^{\ell} \frac{1}{N}\sum_{i=1}^N \E\left[|D_j^i|^2\right] \\
    &\leq L \dt^2 \mathsf c \sum_{j=0}^{\ell} \E\left[\frac{1}{N}\sum_{i=1}^N |X_j^i - \hat{X}_j^i|^2\right] \leq \mathsf c T \dt \sum_{j=0}^{\ell} Z_j.
\end{align*}
Since $M_j^i$ is a martingale difference sequence by construction ($\E[M_j^i \mid \mathcal{F}_j] = 0$), Doob's maximal inequality yields:
\begin{align*}
    \E\left[\sup_{0\leq k\leq \ell+1} \frac{1}{N}\sum_{i=1}^N \left|\sum_{j=0}^{k-1} M_j^i\right|^2\right] &\leq 4 \sum_{j=0}^{\ell} \frac{1}{N}\sum_{i=1}^N \E\left[|M_j^i|^2\right] \\
    &\leq \mathsf c \dt \sum_{j=0}^{\ell} \E\left[\frac{1}{N}\sum_{i=1}^N \left|\proj{X_j^i} G_{j+1}(X_j^i) - \rproj{\hat{X}_j^i} G_{j+1}(\hat{X}_j^i)\right|^2\right] + \mathsf c T \dt \\
    &\leq \mathsf c \dt \sum_{j=0}^{\ell} Z_j + \mathsf c T \dt,
\end{align*}
where the final inequality follows from the uniform boundedness of the projections together with the Lipschitz bound $\sum_{k\geq 1}|\rproj{x}\sigma_k(x)-\rproj{y}\sigma_k(y)|^2\leq \mathsf c |x-y|^2$ for the limiting coefficient family, proved in Lemma \ref{lem:regularity_projected_coefficients}.\\[10pt]
\textbf{Conclusion:} Combining the bounds for (A), (B), and (C), we obtain a recursive bound on the cumulative error:
\begin{equation*}
    Z_{\ell+1} \leq \mathsf c_N\left( \dt + \frac{1}{m} \right) + \mathsf c \dt \sum_{j=0}^{\ell} Z_j.
\end{equation*}
Because $Z_0 = 0$ by the matched initial conditions, applying the discrete Gr\"onwall lemma yields the final bound:
\begin{equation*}
    Z_L \leq \mathsf c_N\left(\dt + \frac{1}{m}\right) e^{\mathsf c_NT} = \mathsf c_N\left(\frac{T}{L} + \frac{1}{m}\right) e^{\mathsf c_NT} \leq \mathsf c_N\left(\frac{1}{L} + \frac{1}{m}\right) e^{\mathsf c_NT}.
\end{equation*}

    Under the assumption that the activation $\act$ is in $C^\infty(\R)$, we leverage our uniform in $N$ bound on (B) \eqref{eqaux_uniform_B} to obtain the second claim in this result:
    \begin{equ}
    Z_L \leq \mathsf c\left(\dt + \frac{1}{m^{1/4}}\right) e^{\mathsf cT} = \mathsf c\left(\frac{T}{L} + \frac{1}{m^{1/4}}\right) e^{\mathsf cT} \leq \mathsf c\left(\frac{1}{L} + \frac{1}{m^{1/4}}\right) e^{\mathsf cT}.
\end{equ}
\end{proof}

A central challenge in this expansion is the singularity of the map $v \mapsto v/|v|$ at the origin. We now provide a set of general geometric results to bound the remainders of the normalization map, ensuring that the noise injected by the MLP layers does not push the particles into unstable regions.

\begin{lemma}
\label{lem:squared_reminder}
    For the remainder $\mathcal{R}_\ell^i$ defined in Lemma \ref{lem:norm_res}, there exists a constant $\mathsf c > 0$ such that for all $\ell \in \{1,  \dots, L\}$ and $i \in \{1, \dots, N\}$ and all $m \in \mathbb N$:
    \begin{equ}
        \E\left[|\mathcal{R}_\ell^i|^2\right] \leq \mathsf c\dt^3.
    \end{equ}
\end{lemma}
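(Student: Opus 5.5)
The plan is to start from the pointwise bound \eqref{eq:bound_Ri} of Lemma~\ref{lem:norm_res}, square it using $(\sum_{r=1}^6 a_r)^2\le 6\sum_r a_r^2$, and take expectations term by term. Everything then reduces to moment bounds on the increments $\mathbf w=\sqrt{\dt}\,G^m_{\ell+1}(Y^i_\ell)$ and $\mathbf w'=\sqrt{\dt}\,G^m_{\ell+1}(X^i_\ell)$, and on their difference. The target is that each of the six squared terms is $O(\dt^3)$.

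\emph{Moments of $\mathbf w$.} Since $Y^i_\ell$ is a function of $\mathbf X_\ell$ only (it is computed from the attention step), it is $\mathcal F_\ell$-measurable and lies on $\S^{d-1}\subset B(0,3)$. The field $G^m_{\ell+1}$ is independent of $\mathcal F_\ell$: even under the adapted coupling of Remark~\ref{rem:adapted_coupling_convention} its marginal law is the prescribed one, conditionally on $\mathcal F_\ell$. Conditioning on $\mathcal F_\ell$ and applying Lemma~\ref{lem:unif_gl} then gives $\E|\mathbf w|^p\le \mathsf c_p\,\dt^{p/2}$ for every $p$, uniformly in $m$, $\ell$ and $i$. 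With this, the terms follow directly:
\begin{itemize}
\item $\dt^4\le \dt^3$ for $\dt\le 1$;
\item $\E|\mathbf w|^6\lesssim \dt^3$;
\item $\dt^2\,\E|\mathbf w|^2\lesssim \dt^3$;
\item $\dt^2\,\E|\mathbf w|^4\lesssim \dt^4$.
\end{itemize}

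\emph{Difference terms.} These are the genuine content of the lemma. Write $\mathbf w-\mathbf w'=\sqrt{\dt}\,\bigl(G^m_{\ell+1}(Y^i_\ell)-G^m_{\ell+1}(X^i_\ell)\bigr)$. Since $Y^i_\ell$ is the normalization of $X^i_\ell+\dt\,\Attn$, boundedness of the attention (Assumption~\ref{ass:att_reg}) together with the Lipschitz property of $\LN$ near the sphere gives $|Y^i_\ell-X^i_\ell|\le \mathsf c\,\dt$ deterministically, given $\mathcal F_\ell$. Conditioning on $\mathcal F_\ell$ and using the increment estimate of Lemma~\ref{lem:unif_gl} yields
\[
\E|\mathbf w-\mathbf w'|^2\le \mathsf c\,\dt\cdot\dt^2=\mathsf c\,\dt^3 ,
\]
which handles the term $(1\cdot)|\mathbf w-\mathbf w'|$ after squaring.

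\emph{Main obstacle: higher moments of the increment.} The remaining terms $|\mathbf w|^2|\mathbf w-\mathbf w'|^2$ and $|\mathbf w-\mathbf w'|^4$ need more than the second moment of the increment, while Lemma~\ref{lem:unif_gl} only states an $L^2$ increment bound. I would upgrade it to an $L^p$ bound as follows. Conditionally on $(U,b_U)$, the difference $G^m(y)-G^m(x)=m^{-1/2}W\bigl(a(y)-a(x)\bigr)$ is Gaussian with variance $m^{-1}|a(y)-a(x)|^2$. Hence its $p$-th moment is bounded by
\[
\mathsf c_p\,\E\bigl[(m^{-1}|a(y)-a(x)|^2)^{p/2}\bigr]\le \mathsf c_p\, m^{-1}\textstyle\sum_r \E|a_r(y)-a_r(x)|^p \le \mathsf c_p L_\act^p |x-y|^p ,
\]
using Jensen's inequality and the Lipschitz continuity of $\act$, exactly as in the proof of Lemma~\ref{lem:unif_gl}. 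This gives $\E|\mathbf w-\mathbf w'|^4\lesssim \dt^2\dt^4$. For the mixed term, Cauchy--Schwarz gives
\[
\E\bigl[|\mathbf w|^2|\mathbf w-\mathbf w'|^2\bigr]\le (\E|\mathbf w|^4)^{1/2}(\E|\mathbf w-\mathbf w'|^4)^{1/2}\lesssim \dt\cdot \dt^{3},
\]
which is again $O(\dt^3)$.

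Summing the six contributions gives $\E|\mathcal R^i_\ell|^2\le \mathsf c\,\dt^3$, with $\mathsf c$ independent of $m$, $\ell$ and $i$. The condition $\dt\le(2\mathsf c)^{-1}$ is inherited from Lemma~\ref{lem:norm_res}. For larger $\dt$ the claim holds trivially, because $\mathcal R$ is controlled by the bounded increments and $\dt^3$ is then bounded below.
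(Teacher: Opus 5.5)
Your proposal is correct and takes essentially the same route as the paper. Both arguments square the pointwise bound \eqref{eq:bound_Ri}, control the moments of $\mathbf w$ through Lemma~\ref{lem:unif_gl}, and bound the second and fourth moments of $\mathbf w-\mathbf w'$ using $|Y^i_\ell-X^i_\ell|\le \mathsf c\,\Delta t$, the conditional Gaussianity of $m^{-1/2}W\bigl(a(y)-a(x)\bigr)$ given the hidden-layer activations, Jensen's inequality, and the Lipschitz continuity of the activation. Where you differ, you are slightly more explicit than the paper: you spell out the conditioning on $\mathcal F_\ell$ under the adapted coupling, the Cauchy--Schwarz step for the mixed term $|\mathbf w|^2|\mathbf w-\mathbf w'|^2$, and the regime $\Delta t>(2\mathsf c)^{-1}$.
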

\begin{proof}
    The bound follows directly from the estimates in Equation \eqref{eq:bound_Ri}. By applying the elementary inequality $(a_1 + \dots + a_k)^2 \leq k(a_1^2 + \dots + a_k^2)$, we can bound the second moment of the total remainder by bounding the second moment of each constituent term.
    
    Using the boundedness of the moments of the Gaussian process $G_{\ell+1}^m$ (Lemma \ref{lem:unif_gl}) and recalling that $\mathbf{w} = \sqrt{\dt} G_{\ell+1}^m(Y_{\ell}^i)$ implies $\E[|\mathbf{w}|^p] \leq \mathsf c_p \dt^{p/2}$, we have:
    \begin{align*}
        \mathsf c \E\left[|\mathbf{w}|^6\right] \leq \mathsf c\dt^3 \,,
\qquad         \E\left[ \left( 2|\mathbf{w}| \mathsf c\dt + \frac{9}{2}|\mathbf{w}|^2 \mathsf c \dt \right)^2 \right] \leq \mathsf c\dt^3 \,.
    \end{align*}
    
    For the noise substitution error involving $\delta := \sqrt{\dt}(G_{\ell+1}^m (Y^i_{\ell}) - G_{\ell+1}^m (X^i_{\ell}))$, we rely on the uniform Lipschitz property of $G_{\ell+1}^m$ and the fact that $|Y^i_{\ell}- X^i_{\ell}| \leq \mathsf c\dt$ (from Eq. \eqref{eq:Y_X_E_ATT}). This yields:
    \begin{equ}
        \E[|\delta|^2] = \dt \E\left[|G_{\ell+1}^m (Y^i_{\ell}) - G_{\ell+1}^m (X^i_{\ell})|^2\right] \leq \dt \E\left[ L^2 |Y^i_{\ell} - X^i_{\ell}|^2 \right] \leq \mathsf c \dt^3.
    \end{equ}
    Moreover, if we set
    \begin{equ}
        b := \act\left(\frac{1}{\sqrt d}U^{\ell+1}Y^i_{\ell} + \biasU{\ell+1}\right)
        - \act\left(\frac{1}{\sqrt d}U^{\ell+1}X^i_\ell + \biasU{\ell+1}\right),
    \end{equ}
    then conditionally on $b$ the difference $G_{\ell+1}^m(Y^i_{\ell})-G_{\ell+1}^m(X^i_\ell)$ is centered Gaussian with covariance $\frac1m |b|^2 \Id$, since the output bias cancels. Hence
    \begin{equ}
        \E[|\delta|^4 \mid b]
        = \mathsf c_d \dt^2 \left(\frac1m |b|^2\right)^2.
    \end{equ}
    Using Jensen's inequality and the Lipschitz continuity of $\act$,
    \begin{equ}
        \E\left[\left(\frac1m |b|^2\right)^2\right]
        = \E\left[\left(\frac1m \sum_{j=1}^m b_j^2\right)^2\right]
        \leq \frac1m \sum_{j=1}^m \E[|b_j|^4]
        \leq \mathsf c |Y^i_{\ell}-X^i_\ell|^4
        \leq \mathsf c \dt^4.
    \end{equ}
    Therefore $\E[|\delta|^4] \leq \mathsf c\dt^6$. Combining these bounds for the final term yields:
    \begin{equ}
        \E\left[ \left( (1 + 3|\mathbf{w}|)|\delta| + \frac{3}{2}|\delta|^2 \right)^2 \right] \leq \mathsf c\dt^3.
    \end{equ}
    Summing these individual bounds concludes the proof.
\end{proof}

\subsection{Strong convergence of the single step dynamics}
\label{sec:strong-single-step}

We now recall the three processes entering the final comparison: the discrete approximation scheme $(\hat X_\ell)_{\ell=0}^L$ defined in \eqref{eq:dynamics_hat}, its continuous-time interpolation $(\hat X_t)_{t\in[0,T]}$, and the limiting diffusion $(\bar X_t)_{t\in[0,T]}$ from \eqref{eq:dynamics_tokens}. For $t \in [0,T]$, let $\ell_t$ denote the index of the time interval such that $t_{\ell_t} \le t < t_{\ell_t+1}$, where $t_k = k \dt$. The interpolated process is defined by
\begin{equ}
\label{eq:dynamics_hat_interp}
\hat{X}^{i}_{t} = X^i_0 + \int_0^t \rdriftN{\ell_s}{\hat{X}} \, ds + \sum_{k=1}^\infty \int_0^t \rproj{\hat{X}^i_{\ell_s}} \sigma_k(\hat{X}^i_{\ell_s}) \, dB^k_s.
\end{equ}
At the discrete grid points $t_\ell$, this process coincides exactly with the discrete updates $\hat X_\ell^i$.

Since $\bar X_t^i\in \S^{d-1}$ almost surely for all $t$ (see lemma below), and the regularized coefficients coincide with the original ones on the sphere, we may equivalently view $\bar X$ as solving the regularized equation
\begin{equ}
\label{eq:dynamics_true}
\begin{cases}
    d{\bar X}^i_{t} = \rdriftN{t}{\bar X} \, dt + \sum_{k=1}^\infty \rproj{\bar X^i_t} \sigma_k({\bar X}^i_t) \, dB^k_t, \\
    {\bar X}^{i}_0 = X^{i}_0.
\end{cases}
\end{equ}

Before comparing these dynamics, we record the relevant a priori bounds.
\begin{lemma}
\label{lem:uniform_bounds}
    There exists a constant $\mathsf c > 0$, depending on $T$ but independent of $L$ and $N$, such that for all $i$:
    \begin{equ}
        \E\left[\sup_{0\le t\le T} |\bar X_t^i|^2\right] = 1 \quad \text{and} \quad \E\left[\sup_{0\le t\le T} |\hat{X}_t^i|^2\right] \le \mathsf c.
    \end{equ}
\end{lemma}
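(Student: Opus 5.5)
The lemma has two parts: $\bar X^i_t$ stays on the sphere, and the interpolated scheme $\hat X^i_t$ has bounded second moments. I will treat them separately.

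\textbf{First claim.} The identity $\E[\sup_t|\bar X^i_t|^2]=1$ reduces to showing that $|\bar X^i_t|=1$ for all $t$ almost surely. I will work with the regularized equation \eqref{eq:dynamics_true}, whose coefficients are globally Lipschitz by Lemma~\ref{lem:regularity_projected_coefficients}. This gives a unique strong solution on $(\mathbb R^d)^N$. Apply It\^o's formula to $f(\bar X^i_t)=|\bar X^i_t|^2$ up to the exit time $\tau$ of $\bar X^i$ from the shell $\{|x|\le 3/2\}$. Inside that shell $\rho\equiv1$ and $T(x)=x$. The drift contributes
\begin{equ}
2\langle x, P_x\Attn\rangle - \K_{\mathrm{amb}}(x,x)(d-1)|x|^2 .
\end{equ}
The It\^o correction contributes $\sum_{k,\alpha}|P_x\sigma_{k\alpha}(x)|^2 = \K_{\mathrm{amb}}(x,x)\,\Tr(P_x^\top P_x)$. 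The martingale part is $2\sum_k\sigma_k(x)\langle x, P_x dB^k\rangle$.

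With $P_x y = y - \langle y,x\rangle x$ extended off the sphere, one computes $\langle x, P_x y\rangle = (1-|x|^2)\langle x,y\rangle$ and $\Tr(P_x^\top P_x)=d-2|x|^2+|x|^4$. Therefore $u_t:=|\bar X^i_t|^2-1$ satisfies a linear SDE
\begin{equ}
du_t = u_t\,\big(a_t\,dt + \textstyle\sum_k \gamma^k_t\, dB^k_t\big),
\end{equ}
with bounded progressively measurable coefficients $a_t,\gamma^k_t$. The constant term vanishes exactly when $|x|=1$, using $\Tr=d-1$ on the sphere, which is precisely how the radial correction was chosen. Since $u_0=0$, uniqueness for this linear equation, or equivalently a Gr\"onwall bound on $\E[u_{t\wedge\tau}^2]$, gives $u\equiv0$ on $[0,\tau]$. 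Then $\tau=\infty$ almost surely, so $|\bar X^i_t|=1$ for all $t$ and the first identity follows. The only delicate point is to check this algebra carefully with the extended projection, so that the drift and It\^o correction really produce a factor $u_t$.

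\textbf{Second claim.} For the bound on $\hat X^i_t$, I will use the interpolation \eqref{eq:dynamics_hat_interp} together with two facts: $b^\rho$ is globally bounded and $\sum_k|P^\rho\sigma_k|^2\le\mathsf c$. Boundedness of $b^\rho$ holds because $\rho$ is supported in $B(0,2)$, the attention is bounded (Assumption~\ref{ass:att_reg}), and $K_{\mathrm{amb}}$ is bounded on $B(0,2)$.

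Expanding the square, we get
\begin{equ}
\sup_t|\hat X^i_t|^2\le 3|X_0^i|^2+3T\int_0^T|b^\rho|^2ds+3\sup_t\Big|\textstyle\sum_k\int_0^t P^\rho\sigma_k\,dB^k_s\Big|^2 .
\end{equ}
The first term equals $3$. The second is at most $\mathsf c T^2$. For the third, the Burkholder--Davis--Gundy (or Doob) inequality gives at most $4\,\E\int_0^T\sum_k|P^\rho\sigma_k|^2ds\le \mathsf c T$. The resulting constant depends only on $T$ and on the constants of Lemma~\ref{lem:regularity_projected_coefficients} and Assumption~\ref{ass:att_reg}, hence not on $L$ or $N$.

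The main obstacle is the first part: verifying the exact cancellation that makes $|x|^2-1$ a factor of the It\^o differential.
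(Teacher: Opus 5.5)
Your proposal is correct and follows essentially the paper's route. For $\bar X$ you apply It\^o's formula to $|x|^2$, where the radial drift exactly cancels the It\^o correction. For $\hat X$ you use Cauchy--Schwarz on the drift and Doob's inequality on the stochastic integral, with the global bounds of Lemma~\ref{lem:regularity_projected_coefficients}; you are right that the paper's extra Gr\"onwall step is not needed there.

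Your first part is in fact more careful than the paper's. The paper uses $\bar X_t\cdot P_{\bar X_t}v=0$ and $\mathrm{Tr}(P_{\bar X_t})=d-1$, i.e.\ $|\bar X_t|=1$, inside the very computation meant to prove it. Your off-sphere computation avoids this: the radial terms factor as $K_{\mathrm{amb}}(x,x)(|x|^2-1)(|x|^2-d)$, so $u_t=|\bar X^i_t|^2-1$ solves a linear SDE with bounded coefficients. Together with your stopping-time uniqueness argument, this removes the circularity.
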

\begin{proof}
The first bound holds identically. Indeed, applying It\^o's formula to \(f(x)=|x|^2\) along trajectories of \eqref{eq:dynamics_tokens}
we obtain
\begin{equ}
\begin{aligned}
\dd|\bar X_t^i|^2
&=2\bar X_t^i\cdot d\bar X_t^i+\sum_{k=1}^{\infty}\Tr\!\left((\sigma_k(\bar X_t^i)P_{\bar X_t^i})(\sigma_k(\bar X_t^i)P_{\bar X_t^i})^\top\right)dt \\
&=-\kiso(d-1)|\bar X_t^i|^2dt+\sum_{k=1}^{\infty}\sigma_k(\bar X_t^i)^2\Tr(P_{\bar X_t^i})\,dt,
\end{aligned}
\end{equ}
since \(\bar X_t^i\cdot P_{\bar X_t^i}v=0\) for every \(v\in\R^d\). As \(|\bar X_t^i|=1\) implies \(\Tr(P_{\bar X_t^i})=d-1\) and \(\sum_{k=1}^{\infty}\sigma_k(\bar X_t^i)^2=\kiso\) on \(\S^{d-1}\), the two drift terms cancel and so
$
\dd|\bar X_t^i|^2=0.
$
Hence \(|\bar X_t^i|=|X_0^i|=1\) for all \(t\in[0,T]\) almost surely.
    For the interpolated process $\hat{X}_t^i$, the regularized drift and diffusion coefficients are globally bounded by Lemma~\ref{lem:regularity_projected_coefficients}. The uniform bound on the expected supremum of the squared norm then follows by a standard argument based on Cauchy-Schwarz for the drift integral, Doob's maximal inequality for the stochastic integral, and the continuous Gr\"onwall lemma.
\end{proof}

We first control the distance between the discrete scheme and its interpolation.

\begin{lemma}
\label{lem:increment_delta}
    There exists a constant $\mathsf c_T>0$, independent of $N$, $m$, and $L$, such that
    \begin{equ}
    \E\left[\sup_{0\le t\le T}\frac1N\sum_{i=1}^N |\hat X_{\ell_t}^i-\hat X_t^i|^2\right]
    \le \mathsf c_T\,\frac{\log(2L)}{L}.
    \end{equ}
\end{lemma}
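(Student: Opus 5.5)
Since $\hat X$ at grid points coincides with the interpolation, for $t\in[t_\ell,t_{\ell+1})$ we have
\begin{equ}
\hat X_t^i-\hat X_{\ell_t}^i = (t-t_\ell)\,\rdriftN{\ell}{\hat X} + \sum_{k=1}^\infty \rproj{\hat X^i_{\ell}}\sigma_k(\hat X^i_\ell)\,(B^k_t-B^k_{t_\ell}).
\end{equ}
The plan is to bound the two contributions separately, using $|a+b|^2\le 2|a|^2+2|b|^2$ and taking the supremum over $t$ as a maximum over $\ell\in\{0,\dots,L-1\}$ of suprema over each subinterval.

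\textbf{Drift term.} By Lemma~\ref{lem:regularity_projected_coefficients}, the regularized drift is globally bounded. Indeed, $\rho$ cuts off outside $B(0,2)$, the attention is evaluated at $T(x)\in B(0,3)$ and is bounded by Assumption~\ref{ass:att_reg}, and $\K_{\mathrm{amb}}$ is bounded on $B(0,2)$. Hence this term is deterministically at most $\mathsf c\,\dt^2=\mathsf c T^2/L^2$, uniformly in $N$.

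\textbf{Martingale term.} Fix $\ell$ and condition on $\mathcal F_{t_\ell}$. The process
\begin{equ}
M^{i,\ell}_s := \sum_k \rproj{\hat X^i_\ell}\sigma_k(\hat X^i_\ell)(B^k_{t_\ell+s}-B^k_{t_\ell}), \qquad s\in[0,\dt],
\end{equ}
is a continuous Gaussian martingale with frozen coefficients. Its componentwise quadratic variation is at most $\mathsf c\, s$, since $\sum_k|\rproj{x}\sigma_k(x)|^2\le\mathsf c$ by Lemma~\ref{lem:regularity_projected_coefficients}. So each coordinate is a time-changed Brownian motion with variance rate at most $\mathsf c$.

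The naive route of bounding $\max_\ell$ by $\sum_\ell$ gives only $O(1)$, so the $\log L$ must come from a Gaussian maximal inequality. The key step, and the main obstacle, is to control
\begin{equ}
\E\Big[\max_{\ell}\frac1N\sum_i\sup_{s\le\dt}|M^{i,\ell}_s|^2\Big],
\end{equ}
where the maximum sits outside the average over $i$. Setting $Y_\ell:=\frac1N\sum_i\sup_{s\le\dt}|M^{i,\ell}_s|^2$, I would prove a conditional exponential moment bound
\begin{equ}
\E\big[e^{\lambda Y_\ell/\dt}\mid\mathcal F_{t_\ell}\big]\le C
\end{equ}
for some $\lambda>0$ independent of $N$ and $\ell$. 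This follows from Jensen's inequality applied to the convex map $y\mapsto e^{\lambda y}$ over the average in $i$:
\begin{equ}
e^{\lambda Y_\ell/\dt}\le \frac1N\sum_i e^{\lambda\sup_s|M^{i,\ell}_s|^2/\dt}.
\end{equ}
Each summand has bounded conditional expectation for $\lambda$ small. This uses the sub-Gaussian tail of the running supremum of a continuous martingale with bounded quadratic-variation rate, obtained via the reflection principle or the exponential martingale inequality $\mathbb P(\sup_{s\le\dt}|M_s|>r)\le 2d\,e^{-r^2/(2d\mathsf c\dt)}$.

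Given this bound, take expectations and apply Jensen to the logarithm:
\begin{equ}
\lambda\,\E[\max_\ell Y_\ell]/\dt\le \log\E\big[e^{\lambda\max_\ell Y_\ell/\dt}\big]\le\log\Big(\sum_\ell \E\big[e^{\lambda Y_\ell/\dt}\big]\Big)\le\log(CL).
\end{equ}
Hence $\E[\max_\ell Y_\ell]\le \mathsf c\,\dt\log(CL)\le\mathsf c_T\log(2L)/L$. Combined with the $O(L^{-2})$ drift bound, this gives the claim with a constant independent of $N$, $m$ and $L$.
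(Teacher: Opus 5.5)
Your proposal is correct, and its skeleton matches the paper's proof. Both split $\hat X^i_t-\hat X^i_{\ell_t}$ into a drift part, bounded deterministically by $\mathsf c\,\Delta t^2$, and a frozen-coefficient stochastic integral. Both use convexity over the average in $i$ to reduce to single-particle bounds, which is what makes the constant independent of $N$. Both extract the $\log L$ from a maximal inequality over the $L$ blocks rather than from bounding the maximum by the sum.

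The difference is the tool used to pay for the maximum. The paper works with $p$-th moments for $p=2\max\{1,\log(2L)\}$:
\begin{itemize}
\item it applies Jensen to $x\mapsto x^{p/2}$;
\item it invokes the Burkholder--Davis--Gundy inequality with its $(\mathsf c p)^{p/2}$ dependence on $p$;
\item it concludes via $\E[\max_\ell Y_\ell]\le\bigl(\sum_\ell\E[Y_\ell^{p/2}]\bigr)^{2/p}\le \mathsf c\,p\,L^{2/p}\Delta t$, with $L^{2/p}\le e$.
\end{itemize}
You instead prove a uniform exponential moment bound $\E[e^{\lambda Y_\ell/\Delta t}]\le C$ from the exponential martingale inequality, and then use the log-sum-exp bound.

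The two routes are equivalent. Sub-Gaussian tails of $\sup_s|M_s|$ amount to $L^p$ norms growing like $\sqrt p$, and $p\approx\log L$ is exactly where the polynomial version reproduces your bound. Your route is somewhat more self-contained: it needs only the elementary Bernstein-type inequality $\mathbb P(\sup_{s\le\Delta t}|M^\alpha_s|>r)\le 2e^{-r^2/(2\mathsf c\Delta t)}$ for a continuous martingale with a deterministic bound on its quadratic-variation rate. It also gives explicit constants (any $\lambda<1/(2d\mathsf c)$). The paper's route instead relies on the less commonly quoted fact that the BDG constant grows like $\sqrt p$.

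As a side remark, your conditioning on $\mathcal F_{t_\ell}$ and the Gaussian structure of the frozen-coefficient increment are not actually needed. The quadratic-variation bound $\sum_k|\rho(x)P_x\sigma_k(x)|^2\le\mathsf c$ holds for every $x$, so the exponential tail bound applies unconditionally.
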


\begin{proof}
For every $\ell\in\{0,\dots,L-1\}$, every $t\in[t_\ell,t_{\ell+1})$, and every $i\in[N]$, by \eqref{eq:dynamics_hat_interp} we have
\begin{equ}
\hat X_t^i-\hat X_\ell^i
=
(t-t_\ell)\rdriftN{\ell}{\hat X}
+
\sum_{k=1}^\infty \int_{t_\ell}^t \rproj{\hat X_\ell^i}\sigma_k(\hat X_\ell^i)\,dB_s^k.
\end{equ}
Hence
\begin{equ}
\frac1N\sum_{i=1}^N |\hat X_t^i-\hat X_\ell^i|^2
\le
2\dt^2\,\frac1N\sum_{i=1}^N |\rdriftN{\ell}{\hat X}|^2
+
2\frac1N\sum_{i=1}^N
\left|
\sum_{k=1}^\infty \int_{t_\ell}^t \rproj{\hat X_\ell^i}\sigma_k(\hat X_\ell^i)\,dB_s^k
\right|^2.
\end{equ}
By Lemma~\ref{lem:regularity_projected_coefficients},
\begin{equ}
\frac1N\sum_{i=1}^N |\rdriftN{\ell}{\hat X}|^2\le \mathsf c,
\qquad
\sum_{k=1}^\infty |\rproj{\hat X_\ell^i}\sigma_k(\hat X_\ell^i)|^2\le \mathsf c
\quad\text{for every }i.
\end{equ}

Let \(p=2\max\{1,\log(2L)\}\). Since \(x\mapsto x^{p/2}\) is convex, for every \(\ell\),
\begin{align*}
&\E\Bigg[\Bigg(
\sup_{t_\ell\le t<t_{\ell+1}}
\frac1N\sum_{i=1}^N
\left|
\sum_{k=1}^\infty \int_{t_\ell}^t \rproj{\hat X_\ell^i}\sigma_k(\hat X_\ell^i)\,dB_s^k
\right|^2
\Bigg)^{p/2}\Biggm|\mathcal F_{t_\ell}\Bigg] \\
&\qquad\le
\frac1N\sum_{i=1}^N
\E\Bigg[
\sup_{t_\ell\le t<t_{\ell+1}}
\left|
\sum_{k=1}^\infty \int_{t_\ell}^t \rproj{\hat X_\ell^i}\sigma_k(\hat X_\ell^i)\,dB_s^k
\right|^p
\Biggm|\mathcal F_{t_\ell}\Bigg] \\
&\qquad\le
\frac1N\sum_{i=1}^N
(\mathsf c p)^{p/2}
\left(
\int_{t_\ell}^{t_{\ell+1}}
\sum_{k=1}^\infty |\rproj{\hat X_\ell^i}\sigma_k(\hat X_\ell^i)|^2\,ds
\right)^{p/2}
\le
(\mathsf c p\dt)^{p/2},
\end{align*}
where in the third row we applied the Burkholder--Davis--Gundy inequality to each $\R^d$-valued martingale.
Therefore
\begin{align*}
&\E\left[\sup_{0\le t\le T}\frac1N\sum_{i=1}^N |\hat X_{\ell_t}^i-\hat X_t^i|^2\right] \\
&\qquad\le
\mathsf c\,\dt^2
+
\mathsf c\,\E\Bigg[
\max_{0\le \ell\le L-1}
\sup_{t_\ell\le t<t_{\ell+1}}
\frac1N\sum_{i=1}^N
\left|
\sum_{k=1}^\infty \int_{t_\ell}^t \rproj{\hat X_\ell^i}\sigma_k(\hat X_\ell^i)\,dB_s^k
\right|^2
\Bigg] \\
&\qquad\le
\mathsf c\,\dt^2
+
\mathsf c\left(
\sum_{\ell=0}^{L-1}
\E\Bigg[\Bigg(
\sup_{t_\ell\le t<t_{\ell+1}}
\frac1N\sum_{i=1}^N
\left|
\sum_{k=1}^\infty \int_{t_\ell}^t \rproj{\hat X_\ell^i}\sigma_k(\hat X_\ell^i)\,dB_s^k
\right|^2
\Bigg)^{p/2}\Bigg]
\right)^{2/p} \\
&\qquad\le
\mathsf c\,\dt^2+\mathsf c\,p\,L^{2/p}\dt.
\end{align*}
Since \(L^{2/p}\le e\) and \(\dt=T/L\), this gives
\begin{equ}
\E\left[\sup_{0\le t\le T}\frac1N\sum_{i=1}^N |\hat X_{\ell_t}^i-\hat X_t^i|^2\right]
\le
\mathsf c_T\,\frac{\log(2L)}{L}.
\end{equ}
\end{proof}

We next control the distance between $\hat X$ and $\bar X$. For this we use a general Euler-Maruyama estimate for systems of $N$ interacting particles driven by common noise, which is stated in the appendix as Proposition \ref{prop:euler_maruyama}. Although the result is classical, we reprove it there because we need a version which is uniform in $N$, allows infinitely many driving noises, and is formulated directly under the global Lipschitz and boundedness assumptions verified in our setting. 

\begin{lemma}
\label{lem:strong_convergence_sde}
    Under Assumption \ref{ass:att_reg}, the time-marginal error between the interpolated scheme $\hat{X}$ and the true continuous dynamics $\bar X$ satisfies:
    \begin{equ}
        \E\left[\sup_{0\leq t\leq T}\frac{1}{N}\sum_{i=1}^N |\hat{X}^i_t - \bar X^i_t|^2\right] \leq \frac{\mathsf c}{L} (4T+T^2) e^{(4+T)\mathsf cT},
    \end{equ}
    where $\mathsf c > 0$ is a constant independent of $N$, $L$, and $m$.
\end{lemma}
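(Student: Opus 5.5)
We will prove Lemma~\ref{lem:strong_convergence_sde} by the classical strong-error argument for the Euler--Maruyama scheme, carried out on the regularized system. Set $E_t^i := \hat X_t^i - \bar X_t^i$. Subtracting \eqref{eq:dynamics_true} from \eqref{eq:dynamics_hat_interp} gives
\begin{equ}
E_t^i = \int_0^t \bigl(\rdriftN{\ell_s}{\hat X} - \rdriftN{s}{\bar X}\bigr)\,ds + \sum_{k=1}^\infty \int_0^t \bigl(\rproj{\hat X^i_{\ell_s}}\sigma_k(\hat X^i_{\ell_s}) - \rproj{\bar X^i_s}\sigma_k(\bar X^i_s)\bigr)\,dB^k_s .
\end{equ}
Here the first drift is evaluated at the frozen configuration $\hat{\mathbf X}_{t_{\ell_s}}$. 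In each integrand we insert and subtract the coefficient evaluated at $\hat{\mathbf X}_s$. This splits every difference into a \emph{discretization part}, frozen versus running $\hat{\mathbf X}$, and a \emph{coupling part}, running $\hat{\mathbf X}$ versus $\bar{\mathbf X}$. Because the Lipschitz bounds of Lemma~\ref{lem:regularity_projected_coefficients} are stated in the averaged form $\frac1N\sum_i$ with an $N$-independent constant, and the diffusion bounds are per particle, all estimates below will be carried out on the empirical average $\frac1N\sum_i |E^i_t|^2$. This keeps the constants uniform in $N$.

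Let $\phi(t) := \E[\sup_{s\le t}\frac1N\sum_i|E_s^i|^2]$. For the drift integral we use Cauchy--Schwarz in time, which produces a factor $t\le T$. For the martingale term we use Doob's (or BDG's) inequality with constant $4$, applied to the square-summable family indexed by $k$; this reduces its contribution to $\E\int_0^t \frac1N\sum_i\sum_k|\cdots|^2\,ds$. Lemma~\ref{lem:regularity_projected_coefficients} then bounds the coupling parts by $\mathsf c(T+4)\int_0^t\phi(s)\,ds$. It bounds the discretization parts by
\begin{equ}
\mathsf c(T+4)\int_0^T \E\Bigl[\frac1N\sum_i|\hat X^i_s-\hat X^i_{t_{\ell_s}}|^2\Bigr]ds .
\end{equ}

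The key input is the \emph{fixed-time} one-step estimate $\E[\frac1N\sum_i|\hat X^i_s-\hat X^i_{t_{\ell_s}}|^2]\le \mathsf c\,\dt$. This follows as in the proof of Lemma~\ref{lem:increment_delta}, but without taking a supremum: the drift contributes $\mathsf c\,\dt^2$, and the It\^o isometry together with $\sum_k|\rproj{x}\sigma_k(x)|^2\le\mathsf c$ gives $\mathsf c\,\dt$ for the noise. Because no supremum over many intervals is taken, no logarithm appears, and integrating over $[0,T]$ gives $\mathsf cT\dt$. Putting the pieces together,
\begin{equ}
\phi(t)\le \mathsf c(4+T)T\dt + \mathsf c(4+T)\int_0^t\phi(s)\,ds ,
\end{equ}
and Gr\"onwall's lemma yields $\phi(T)\le \mathsf c(4T+T^2)\dt\,e^{\mathsf c(4+T)T}$. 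Substituting $\dt=T/L$ and absorbing the factor $T$ into the constant (or equivalently keeping it in the displayed polynomial) gives the stated bound.

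The main obstacle is not the Gr\"onwall scheme but ensuring that the coupling-part bound for the drift really holds uniformly in $N$. The difference $\rdriftN{}{\hat{\mathbf X}_s}-\rdriftN{}{\bar{\mathbf X}_s}$ depends on the whole configuration through the attention field. It must therefore be controlled by $\frac1N\sum_j|\hat X^j_s-\bar X^j_s|^2$, via the $\W_2$-Lipschitz bound in Assumption~\ref{ass:att_reg} and the estimate $\W_2^2(\mu^N_{\mathbf x},\mu^N_{\mathbf y})\le\frac1N\sum_j|x^j-y^j|^2$; this is exactly the averaged form recorded in Lemma~\ref{lem:regularity_projected_coefficients}. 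A second point needs care: the suprema must sit inside the expectation of the empirical average, not be taken particle by particle, since the latter would make the maximal inequality cost a factor $N$. We handle this by applying Doob's inequality to each particle's martingale and then averaging, which uses linearity and $\sup\sum\le\sum\sup$. This is precisely the content of Proposition~\ref{prop:euler_maruyama}, which we may invoke directly once its Lipschitz and boundedness hypotheses are checked via Lemma~\ref{lem:regularity_projected_coefficients} and Lemma~\ref{lem:uniform_bounds}.
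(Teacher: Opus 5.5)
Your proposal is correct and is essentially the paper's proof. The paper simply invokes Proposition~\ref{prop:euler_maruyama} with $a(x^i,\mathbf x)=b^\rho(x^i,\mu^N_{\mathbf x})$ and $\tilde\sigma_k(x)=\rho(x)P_x\sigma_k(x)$, checks the hypotheses via Lemma~\ref{lem:regularity_projected_coefficients}, and uses Lemma~\ref{lem:uniform_bounds} to identify $\bar X$ with the solution of the regularized equation. The only immaterial difference is in your inline sketch: you insert the running scheme $\hat{\mathbf X}_s$ and use the scheme's one-step increment bound, whereas the appendix proof inserts the exact solution frozen at grid points and uses Lemma~\ref{lem:increment_delta_appendix}; both rely only on bounded coefficients and lead to the same Gr\"onwall inequality.
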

\begin{proof}
    This bound is a direct consequence of Proposition~\ref{prop:euler_maruyama} applied to the regularized equation \eqref{eq:dynamics_true}, with
    \begin{equ}
    a(x^i,\mathbf x)=b^\rho(x^i,\mu_{\mathbf x}^N),
    \qquad
    \tilde \sigma_k(x)=\rproj{x}\sigma_k(x).
    \end{equ}
    Lemma~\ref{lem:regularity_projected_coefficients} gives the global Lipschitz and boundedness estimates required in the proposition, while Lemma~\ref{lem:uniform_bounds} ensures that the regularized dynamics coincide with the original limiting dynamics along the trajectory of $\bar X$. This yields the claim.
\end{proof}

We can now conclude the proof of Proposition~\ref{prop:quantitative_deep_limit}.
\begin{proof}[Proof of Proposition \ref{prop:quantitative_deep_limit}]
    Using $(a+b+c)^2 \le 3a^2+3b^2+3c^2$, we obtain
    \begin{equs}
        \E\left[\sup_{0 \le t \le T} \frac{1}{N}\sum_{i=1}^N |X_{\ell_t}^i - \bar X_{t}^i|^2\right]
        &\leq 3\E\left[\max_{0 \le k \le L} \frac{1}{N}\sum_{i=1}^N |X_k^i - \hat{X}_{k}^i|^2\right] \\
        &\label{e:secondterm} \quad + 3\E\left[\sup_{0 \le t \le T} \frac{1}{N}\sum_{i=1}^N |\hat{X}_{\ell_t}^i - \hat{X}_t^i|^2\right] \\
        &\quad + 3\E\left[\sup_{0 \le t \le T} \frac{1}{N}\sum_{i=1}^N |\hat{X}_t^i - \bar X_t^i|^2\right].
    \end{equs}
The three terms above are bounded by Lemmas~\ref{lem:strong_error}, \ref{lem:increment_delta}, and \ref{lem:strong_convergence_sde}. 
    Collecting the three bounds gives
    \begin{equs}
    \E\left[\sup_{0\leq t\leq T}\frac{1}{N}\sum_{i=1}^N |{X^{i}}_{\lfloor{Lt/T}\rfloor} - \bar X^i_t|^2\right]
        &\leq \mathsf c_N \exp({\mathsf c_N T^2}) \left(\frac{\log L}{L} + \frac{1}{m}\right),\\
        \E\left[\sup_{0\leq t\leq T}\frac{1}{N}\sum_{i=1}^N |{X^{i}}_{\lfloor{Lt/T}\rfloor} - \bar X^i_t|^2\right]
        &\leq \mathsf c \exp({\mathsf c T^2}) \left(\frac{\log L}{L} + \frac{1}{m^{1/4}}\right),
    \end{equs}
    which concludes the proof.
\end{proof}

\begin{remark}
    \label{r:droplog}
    If the $\sup$ in the above formulas is taken over the subset $t \in \{T\ell/L\}_{\ell \in \{0, \dots L\}}$ (where $t_\ell = t$) the second term in \eqref{e:secondterm}, carrying the $\log L/L$ rate, vanishes, improving the rate to $1/L$.
\end{remark}

\section{Mean-Field Limit}
\label{sec:MF_limit}

From Section \ref{sec:deep_lim}, we know that the limiting SDE of each token is described by Equation \eqref{eq:dynamics_tokens}:
\begin{equation*}
\dd {\bar X}^i_{t} =\left(P_{\bar X^i_t}\attN{t}{\bar X}-\kiso\frac{d-1}{2}\bar X^i_t\right)\dd t + \sum_{k=1}^\infty  \sigma_k({\bar X}^i_t) P_{\bar X^i_t}\dd B^k_t,
\end{equation*}
with the initial condition $\bar X_0^i = X_0^i\in\S^{d-1}$ for all $i \in [N]$ and $P_x:=\Id-xx^\top$.
Recalling the definition of the fields $\sigma_{k\alpha}~:~\S^{d-1}\to \mathbb R^d$ from \eqref{e:sigmaka}, we now proceed as in \cite{coghi2016propagation} to derive the corresponding Eulerian formulation, by applying It\^o's formula to $\dd\langle \mu_t^N,\varphi\rangle$ for the empirical measure 
\begin{equ}
\mu_t^N:=\frac1N\sum_{i=1}^N \delta_{\bar X_t^i}.
\end{equ}

For any test function \(\varphi\in C_c^\infty(\R^d)\) (the space of compactly supported, smooth test functions), It\^o's formula yields:
\begin{equ}
\begin{aligned}
\dd\langle \mu_t^N,\varphi\rangle
&=
\frac1N\sum_{i=1}^N
\nabla\varphi(\bar X_t^i)\cdot P_{\bar X^i_t}\attN{t}{\bar X}^i\,\dd t
-\kiso\frac{d-1}{2}\,\langle \mu_t^N, x\cdot \nabla\varphi(x)\rangle\,\dd t \\
&\quad
+\frac12
\left\langle \mu_t^N,\sum_{k=1}^\infty\sum_{\alpha=1}^d
(P_x\sigma_{k\alpha}(x))^\top D^2\varphi(x)(P_x\sigma_{k\alpha}(x))\right\rangle \dd t \\
&\quad
+\sum_{k=1}^\infty\sum_{\alpha=1}^d
\left\langle \mu_t^N,\nabla\varphi(x)\cdot P_x\sigma_{k\alpha}(x)\right\rangle \dd B_t^{k,\alpha} .
\end{aligned}
\end{equ}
From Section \ref{s:noise} we know that for \(x\in\mathbb S^{d-1}\) the noise is isotropic, i.e.
\begin{equ}
\sum_{k=1}^\infty\sum_{\alpha=1}^d \big(P_x\sigma_{k\alpha}(x)\big) \big(P_x\sigma_{k\alpha}(x)\big)^\top
=
\kiso\,P_x .
\end{equ}
Indeed, by \eqref{e:sigmaka},
\begin{equ}
\sum_{k=1}^\infty\sum_{\alpha=1}^d \big(P_x\sigma_{k\alpha}(x)\big) \big(P_x\sigma_{k\alpha}(x)\big)^\top
=
\sum_{k=1}^\infty \sigma_k(x)^2 \sum_{\alpha=1}^d (P_xe_\alpha)(P_xe_\alpha)^\top
=
\sum_{k=1}^\infty \sigma_k(x)^2 P_x
=
\kiso\,P_x,
\end{equ}
since \(\sum_{\alpha=1}^d (P_xe_\alpha)(P_xe_\alpha)^\top=P_xP_x^\top=P_x\), and \(\sum_{k=1}^\infty \sigma_k(x)^2=\kiso\) on \(\S^{d-1}\).
Then we can rewrite the second order term as
\begin{equ}
\sum_{k=1}^\infty\sum_{\alpha=1}^d
(P_x\sigma_{k\alpha}(x))^\top D^2\varphi(x)(P_x\sigma_{k\alpha}(x))
=
\kiso\,\Tr\!\big(P_xD^2\varphi(x)\big),
\end{equ}
hence
\begin{equ}
\begin{aligned}
\dd\langle \mu_t^N,\varphi\rangle
&=
\frac1N\sum_{i=1}^N
\nabla\varphi(\bar X_t^i)\cdot P_{\bar X^i_t}\attN{t}{\bar X}^i\,\dd t \\
&\quad
+\frac{\kiso}{2}
\left\langle \mu_t^N,
\Tr\!\big(P_xD^2\varphi(x)\big)-(d-1)x\cdot \nabla\varphi(x)
\right\rangle \dd t \\
&\quad
+\sum_{k=1}^\infty\sum_{\alpha=1}^d
\left\langle \mu_t^N,\nabla\varphi(x)\cdot P_x\sigma_{k\alpha}(x)\right\rangle \dd B_t^{k,\alpha}.
\end{aligned}
\end{equ}

In this formulation, we recognize the appearance of the Laplace-Beltrami operator $\Delta_{\S^{d-1}}$. On \(\S^{d-1}\),
\begin{equ}
\Delta_{\S^{d-1}}(\varphi|_{\S^{d-1}})(x)
=
\Tr\!\big(P_xD^2\varphi(x)\big)-(d-1)x\cdot \nabla\varphi(x).
\end{equ}
Therefore
\begin{equ}
\begin{aligned}
\dd\langle \mu_t^N,\varphi\rangle
&=
\frac1N\sum_{i=1}^N
\nabla\varphi(\bar X_t^i)\cdot P_{\bar X^i_t}\attN{t}{\bar X}^i\,\dd t \\
&\quad
+\frac{\kiso}{2}
\left\langle \mu_t^N,\Delta_{\S^{d-1}}(\varphi|_{\S^{d-1}})\right\rangle \dd t \\
&\quad
+\sum_{k=1}^\infty\sum_{\alpha=1}^d
\left\langle \mu_t^N,\nabla\varphi(x)\cdot P_x\sigma_{k\alpha}(x)\right\rangle \dd B_t^{k,\alpha}.
\end{aligned}
\end{equ}
Given that the dynamics is bound to the sphere as proven in Lemma~\ref{lem:uniform_bounds}, we can write the SPDE in weak form testing against functions on $\S^{d-1}$. For every \(f\in C^\infty(\S^{d-1})\),
\begin{equ}
\dd\langle \mu_t^N,f\rangle
=
\langle \mu_t^N,\nabla_{\S^{d-1}} f\cdot P_x\att{\mu_t^N}{\cdot}\rangle\,\dd t
+
\frac{\kiso}{2}\langle \mu_t^N,\Delta_{\S^{d-1}}f\rangle\,\dd t
+
\sum_{k=1}^\infty\sum_{\alpha=1}^d
\langle \mu_t^N,\nabla_{\S^{d-1}} f\cdot P_x\sigma_{k\alpha}\rangle\,\dd B_t^{k,\alpha},
\end{equ}
where we recall that $\nabla_{\S^{d-1}}$ and $\mathrm{div}_{\S^{d-1}}$ denote, respectively, the Riemannian gradient and divergence operators on the sphere. Equivalently, in divergence form,
\begin{equ}
\dd\mu_t^N
+
 \mathrm{div}_{\S^{d-1}}(P_x\att{\mu_t^N}{\cdot}\mu_t^N)\,\dd t
+
\sum_{k=1}^\infty\sum_{\alpha=1}^d \mathrm{div}_{\S^{d-1}}\!\big((P_x\sigma_{k\alpha})\mu_t^N\big)\,\dd B_t^{k,\alpha}
=
\frac{\kiso}{2}\Delta_{\S^{d-1}}\mu_t^N\,\dd t ,
\end{equ}
which matches the SPDE in Equation \eqref{eq:limiting-spde}. 

\subsection{Quantitative limits}
Here we prove quantitative convergence limits of the empirical measure $\empmu_t$ to the limiting measure $\mu_t$, defined as the unique measure-valued solution to \eqref{eq:limiting-spde} in the sense of \cite[Definition 11]{coghi2016propagation}, namely,
\begin{itemize}
  \item for every $f\in C_b(\S^{d-1})$, $t\mapsto\langle\mu_t, f\rangle$ is an adapted process with a continuous version,
  \item for every $f\in C_b^2(\S^{d-1})$,
  \begin{equ}
    \begin{aligned}
      \langle \mu_t,f\rangle
        &= \langle \mu_{0},f\rangle
        + \int_0^t\langle \mu_s,\nabla_{\S^{d-1}} f\cdot P_x\att{\mu_s}{\cdot}\rangle\,\dd s
        + \frac{\kiso}{2}\int_0^{t}\langle \mu_s,\Delta_{\S^{d-1}}f\rangle\,\dd s\\
        &\quad + \sum_{k=1}^\infty\sum_{\alpha=1}^d \int_0^t\langle \mu_s,\nabla_{\S^{d-1}} f\cdot P_x\sigma_{k\alpha}\rangle\,\dd B_s^{k,\alpha}.
    \end{aligned}
  \end{equ}
\end{itemize}
Existence and uniqueness of solutions, in the sense given above, to \eqref{eq:limiting-spde} follows immediately from \cite[Theorem 13]{coghi2016propagation} thanks to the global Lipschitz bounds of Lemmas~\ref{lem:unif_gl} and \ref{lem:regularity_projected_coefficients}. We proceed to prove the remaining claim of Proposition~\ref{thm:main}.

\begin{proposition}
\label{prop:quantitative_MF_limit}
    Let $\mu_0 \in \mathcal P_1(\S^{d-1})$ be the initial distribution, $\empmu_0$ be the empirical measure of \eqref{eq:dynamics_tokens} at initialization, sampled iid from $\mu_0$. Let $r_{d}(N)$ be the rate
    \begin{equ}
        r_{d}(N)=\begin{cases}
            N^{-1/2}& d<4\\
            N^{-1/2}\log(1+N)& d=4\\
            N^{-2/d} & d>4 
        \end{cases}.
    \end{equ}
    Then the following bound holds:
    \begin{equ}
        \E\left[\sup_{0\leq t\le T} \W_2^2(\empmu_t,\mu_t)\right]\le \mathsf c_{T}\, r_d(N)
    \end{equ}
    where $\mathsf c_{T}$ is a constant that depends only on $T, d, \act, \varbU,\varbW$ and the Lipschitz constants in Assumption~\ref{ass:att_reg}. Here, $\mathbb E$ is the unconditional expectation on the underlying probability space (including the initialization).
\end{proposition}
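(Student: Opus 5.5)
We follow the coupling approach of \cite{coghi2016propagation}, adapted to the sphere. On the same filtered probability space carrying the common Brownian motions $(B^k)_k$, we introduce $N$ auxiliary conditional McKean--Vlasov particles $Y^i$, one for each $i\in[N]$. Each solves
\begin{equ}
\dd Y^i_t = \Big(P_{Y^i_t}\att{\mu_t}{Y^i_t} - \kiso\tfrac{d-1}{2}Y^i_t\Big)\dd t + \sum_{k=1}^\infty \sigma_k(Y^i_t)P_{Y^i_t}\dd B^k_t,\qquad Y^i_0 = X^i_0,
\end{equ}
where $\mu_t$ is the solution of \eqref{eq:limiting-spde} started from $\mu_0$. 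Two facts from \cite[Theorem 13]{coghi2016propagation} and the superposition principle are used here. First, this equation is well posed, since its coefficients are globally Lipschitz in the regularized form of Lemma~\ref{lem:regularity_projected_coefficients}. Second, conditionally on $\mathcal F^B_T$, the $Y^i$ are iid with law $\mu_t$ at each time. The Itô computation of Lemma~\ref{lem:uniform_bounds} applies verbatim and shows that each $Y^i_t$ stays on $\S^{d-1}$. We then split with the triangle inequality,
\begin{equ}
\W_2^2(\empmu_t,\mu_t)\le 2\W_2^2(\empmu_t,\nu^N_t)+2\W_2^2(\nu^N_t,\mu_t),\qquad \nu^N_t:=\tfrac1N\textstyle\sum_i\delta_{Y^i_t},
\end{equ}
and bound the first term by the trivial coupling, $\W_2^2(\empmu_t,\nu^N_t)\le \frac1N\sum_i|\bar X^i_t-Y^i_t|^2$.

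\textbf{Step 1 (stability).} Set $D_t:=\E[\sup_{s\le t}\frac1N\sum_i|\bar X^i_s-Y^i_s|^2]$. We write the difference of the two SDEs and bound the drift part with Cauchy--Schwarz. Assumption~\ref{ass:att_reg} together with the Lipschitz continuity of the projected fields gives
\begin{equ}
|P_x\att{\empmu_s}{x}-P_y\att{\mu_s}{y}|\le \mathsf c\big(|x-y|+\W_2(\empmu_s,\mu_s)\big),
\end{equ}
and we then insert $\W_2(\empmu_s,\mu_s)\le \W_2(\empmu_s,\nu^N_s)+\W_2(\nu^N_s,\mu_s)$. The stochastic integrals are handled by Doob/BDG using $\sum_k|P_x\sigma_k(x)-P_y\sigma_k(y)|^2\le \mathsf c|x-y|^2$ from Lemma~\ref{lem:regularity_projected_coefficients}. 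Because the noise is common and non-interacting, no measure term appears in the diffusion difference. Grönwall then gives
\begin{equ}
D_T\le \mathsf c_T\int_0^T\E\big[\W_2^2(\nu^N_s,\mu_s)\big]\dd s.
\end{equ}

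\textbf{Step 2 (sampling error, the main obstacle).} We need $\E[\sup_{t\le T}\W_2^2(\nu^N_t,\mu_t)]\le \mathsf c_T r_d(N)$, with the supremum inside the expectation. At fixed $t$, the conditional iid structure together with Fournier--Guillin applied conditionally on $\mathcal F^B_T$ gives $\E[\W_2^2(\nu^N_t,\mu_t)]\le \mathsf c\, r_d(N)$. This uses $p=2$ on the compact manifold $\S^{d-1}$, whose intrinsic dimension $d-1$ yields rates at least as good as those stated, and the constant is uniform because all measures are supported in the unit ball. This already closes Step 1. For the supremum, I would follow \cite[Theorem 21]{coghi2016propagation}. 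Take a grid of mesh $h$ and use the time regularity $\E|Y^i_t-Y^i_s|^p\le \mathsf c|t-s|^{p/2}$, which controls both $\W_2(\nu^N_t,\nu^N_s)$ and $\W_2(\mu_t,\mu_s)$. Combine this with a union or higher-moment bound over the grid points, using exponential concentration of $\W_2^2$ on bounded supports conditionally on $B$, and choose $h$ polynomially small in $N$. This yields the rate up to constants, possibly with a logarithmic loss that I expect to be absorbed as in the cited work. The rest of the argument is routine; this step is where the care is needed.

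\textbf{Conclusion.} Combining Steps 1 and 2 gives $\E[\sup_t\W_2^2(\empmu_t,\mu_t)]\le \mathsf c_T r_d(N)$. The constants depend only on $T$, $d$, $\act$, $\varbU$, $\varbW$ and the constants of Assumption~\ref{ass:att_reg}, through Lemmas~\ref{lem:unif_gl} and \ref{lem:regularity_projected_coefficients}.
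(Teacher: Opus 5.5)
Your Step 1 is sound, but Step 2, which you correctly identify as the crux, is not established, and the route you sketch does not reach the stated rate.

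Conditionally on $B$, $\W_2^2(\nu^N_t,\mu_t)$ has bounded differences $4/N$ in the initial samples, since the sphere has diameter $2$. McDiarmid therefore gives sub-Gaussian fluctuations of size $N^{-1/2}$. The grid interpolation error is of order $h\log(1/h)$, so making it smaller than $r_d(N)$ forces $h\lesssim N^{-1/2}/\log N$, i.e.\ polynomially many grid points. The union bound then yields $\E[\max_j\W_2^2(\nu^N_{t_j},\mu_{t_j})]\lesssim r_d(N)+\sqrt{\log N/N}$. For $d\ge4$ the extra term is absorbed, but for $d\in\{2,3\}$, where $r_d(N)=N^{-1/2}$, you are off by a factor $\sqrt{\log N}$. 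The loss also cannot be ``absorbed as in the cited work'': \cite[Theorem 21]{coghi2016propagation} contains no time-uniform law of large numbers for conditionally i.i.d.\ particles. It is a $\W_2$-stability estimate with respect to the initial datum.

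The missing idea is that, because the noise is common and non-interacting, all the sampling error lives at $t=0$. This is how the paper argues. $\empmu_t$ is itself the solution of \eqref{eq:limiting-spde} started from $\empmu_0$. The stability estimate then proceeds as follows: couple a particle from $\empmu_0$ and one from $\mu_0$ optimally and independently of $B$, run the two McKean--Vlasov equations with the same Brownian motions, and close with BDG and Gr\"onwall. This gives $\E[\sup_{t\le T}\W_2^2(\empmu_t,\mu_t)]\le\mathsf c_T\,\E[\W_2^2(\empmu_0,\mu_0)]$, so Fournier--Guillin is needed only at time zero.

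The same observation repairs Step 2 within your own framework. Given $B$, all $Y^i$ are transported by one stochastic flow $\Phi_t$: the SDE with the $\mathcal F^B$-adapted measure $\mu_t$ frozen in the drift. Hence $\nu^N_t=(\Phi_t)_\#\nu^N_0$ and $\mu_t=(\Phi_t)_\#\mu_0$. Take an optimal coupling $\pi_0$ of $(\nu^N_0,\mu_0)=(\empmu_0,\mu_0)$, which is independent of $B$, and push it forward. This gives $\sup_t\W_2^2(\nu^N_t,\mu_t)\le\int\sup_t|\Phi_t(x)-\Phi_t(y)|^2\,\dd\pi_0(x,y)$. BDG and Gr\"onwall give $\E[\sup_t|\Phi_t(x)-\Phi_t(y)|^2]\le\mathsf c_T|x-y|^2$, so $\E[\sup_t\W_2^2(\nu^N_t,\mu_t)]\le\mathsf c_T\,r_d(N)$ with no logarithmic loss. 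Once you see this, the auxiliary particles $Y^i$ and the fixed-time conditional Fournier--Guillin bound in Step 1 become unnecessary, which is exactly the economy of the paper's proof.
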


\begin{proof}

Taking the expectation and applying the bound from \cite[Theorem 1]{fournier2015rate} for the squared $2$-Wasserstein distance yields:

\begin{equ} \label{eq:W1_bound}
\mathbb{E}[\mathcal{W}_2^2(\mu_0^N, \mu_0)] \le \mathsf c r_d(N)
\end{equ}
where $\mathsf c$ is a constant depending only on $d$.
Note that because our measures are compactly supported on $\mathbb{S}^{d-1}$, moments of all orders are uniformly bounded, and the prefactor $M_q$ in \cite[Theorem 1]{fournier2015rate} is absorbed into the optimal rate $r_d(N)$.

Finally, we adapt \cite[Theorem 21]{coghi2016propagation} to bound the continuous-time supremum by the initial distance. Since the sphere is closed under the dynamics of the particle system \eqref{e:sa} and the limiting \eqref{eq:limiting-spde} as proven in Lemma~\ref{lem:uniform_bounds}, we extend the definition of our coefficients to $\R^d$ as detailed in Section~\ref{sec:single-step}. We note that the bounds in Lemmas~\ref{lem:unif_gl} and \ref{lem:regularity_projected_coefficients} verify that the standing assumptions of \cite[Theorem 21]{coghi2016propagation} are satisfied in our setting.  
We also note that while the result in \cite[Theorem 21]{coghi2016propagation}
does not include the supremum inside the expectation, one can use the same Gr\"onwall argument with the stochastic integral treated as a continuous martingale and the Burkholder--Davis--Gundy inequality inserted at the analogue of \cite[Equation~(43)]{coghi2016propagation}, which yields the following expected-supremum estimate:
\begin{align}
\mathbb{E}\left[\sup_{0\le t\le T}\mathcal{W}_2^2(\mu_t^N,\mu_t)\right] &\le 4e^{4T(2TL_b^2+\mathsf c_2L_\sigma^2)}\mathbb{E}[\mathcal{W}_2^2(\mu_0^N,\mu_0)] \\
&\le 4 e^{4T(2TL_b^2+\mathsf c_2L_\sigma^2)} \cdot\mathsf c  r_d(N)
\end{align}
where $L_b$ and $L_\sigma$ are the Lipschitz constants of the coefficients of the system, $\mathsf c_2$ is the constant appearing in the Burkholder--Davis--Gundy inequality with exponent 2, and $\mathsf c$ is a constant that only depends on $d$. 
Letting $\mathsf c_T = 4e^{4T(2TL_b^2+\mathsf c_2L_\sigma^2)}\cdot\mathsf c $ yields the desired bound and concludes the proof.
\end{proof}

\begin{remark}[Uniformity of the Lipschitz constants]
    Crucially, the constant $\mathsf c_T$ remains bounded as $N \to \infty$. The terms $L_b$ and $L_\sigma$ represent the global Lipschitz constants of the regularized drift (the attention mechanism) and the diffusion coefficients, respectively. As established in Section~\ref{sec:deep_lim}, because the attention map is mollified and the Karhunen-Loève eigenfunctions $\sigma_k$ are derived from the bounded, isotropic kernel $\mathcal{K}$, these Lipschitz constants depend strictly on the dimension $d$, the sequence length $T$, the activation function $\act$, and the mollification parameter. They are entirely independent of the number of tokens $N$, ensuring the exponential factor in the Gr\"onwall bound does not blow up in the mean-field limit.
\end{remark}

The following result concludes the first part of the paper, where we proved the statements presented in Section \ref{sec:intro-scaling-limit}.

\begin{proof}[Proof of Corollary \ref{cor:main}]
Fix $N \in \mathbb N$ and let
\begin{equ}
\mu_t^N := \frac{1}{N}\sum_{i=1}^N \delta_{\bar X_t^i},
\end{equ}
where $\bar X$ solves \eqref{eq:dynamics_tokens}. We realize the discrete fields $G_\ell^m$, the Brownian motions $(B^k)_{k \geq 1}$, the particle system $\bar X$, and the SPDE solution $\mu$ on a common probability space, utilizing the coupling $\Gamma_{m,L}$ from Remark \ref{rem:adapted_coupling_convention} and the same initial sample $(X_0^i)_{i=1}^N$. 
For each $t \in [0,T]$, pairing the $i$-th atom of $\mu_t^{N,L,m}$ with the $i$-th atom of $\mu_t^N$ gives the pointwise transport estimate
\begin{equ}
\W_2^2(\mu_t^{N,L,m}, \mu_t^N) \leq \frac{1}{N}\sum_{i=1}^N |X_{\lfloor Lt/T\rfloor}^i - \bar X_t^i|^2.
\end{equ}
Combining this with the triangle inequality for $\W_2$ yields
\begin{align*}
\distS(\mu^{N,L,m}, \mu)
&\leq 2\,\distS(\mu^{N,L,m}, \mu^N) + 2\,\distS(\mu^N, \mu) \\
&\leq 2\,\E\left[\sup_{0\leq t\leq T}\frac{1}{N}\sum_{i=1}^N |X_{\lfloor Lt/T\rfloor}^i - \bar X_t^i|^2\right] + 2\,\mathsf c_T\, r_d(N) \\
&\leq 2\,\mathsf c_N \exp(\mathsf c_N T^2)\left(\frac{\log L}{L} + \frac{1}{m}\right) + 2\,\mathsf c_T\, r_d(N),
\end{align*}
where the last line uses Proposition~\ref{prop:quantitative_deep_limit} for the deep-width error and Proposition~\ref{prop:quantitative_MF_limit} for the mean-field error.

For fixed $N$, the first term vanishes by sending first $m \to \infty$ and then $L \to \infty$. The remaining term tends to $0$ as $N \to \infty$ because $r_d(N) \to 0$. This proves
\begin{equ}
\lim_{N \to \infty} \, \lim_{L \to \infty} \, \lim_{m \to \infty} \, \distS(\mu^{N,L,m}, \mu) = 0.
\end{equ}

If, in addition, we combine the above bounds with the second (uniform in $N$) estimate of Proposition~\ref{prop:quantitative_deep_limit} for $\act \in C^\infty$ as discussed in Remark~\ref{r:uniform}, then the right-hand side above yields
\begin{equ}
\distS(\mu^{N,L,m}, \mu)
\leq \mathsf c_T \left(\frac{\log L}{L} + \frac{1}{m^{1/4}} + r_d(N)\right)\,.
\end{equ}
This converges to $0$ along any sequence with $m,L,N \to \infty$. Hence the three limits commute.
\end{proof}

\section{Exponential clustering for common-noise model}\label{s:sync}
In this section we address the synchronization by noise phenomenon introduced in Section \ref{sec:intro-sync-by-noise}.

We recall that the common-noise dynamics admits both an Eulerian and a Lagrangian description. At the Eulerian level, the evolution of the law is governed by the limiting SPDE \eqref{eq:limiting-spde}.
On the other hand, the corresponding McKean--Vlasov equations for particles transported by the same common noise provide a Lagrangian formulation of the same dynamics, given by \eqref{eq:dynamics_tokens}. In particular, if the initial datum is empirical, then the SPDE solution coincides almost surely with the empirical measure of the corresponding particle system.

Accordingly, in this section we work with the following Lagrangian reformulation of the SPDE. Consider two particles $X_t,Y_t\in \mathbb S^{d-1}$ driven by the same common noise, and conditionally independent given the common-noise filtration $(\mathcal F_t^B)_{t\ge 0}$. Recall from Section \ref{sec:deep_lim} that the corresponding McKean--Vlasov dynamics is given by
\begin{align*}
dX_t &=
\left(
P_{X_t}\Attn(\mu_t,X_t)
-\kiso\frac{d-1}{2}X_t
\right)dt
+
\sum_{k = 1}^\infty\sum_{\alpha=1}^d P_{X_t}\sigma_{k\alpha}(X_t) \, dB_t^{k,\alpha}, \\
dY_t &=
\left(
P_{Y_t}\Attn(\mu_t,Y_t)
-\kiso\frac{d-1}{2}Y_t
\right)dt
+
\sum_{k = 1}^\infty\sum_{\alpha=1}^d P_{Y_t}\sigma_{k\alpha}(Y_t) \, dB_t^{k,\alpha},
\end{align*}
where $P_x = \Id - xx^\top$ is the orthogonal projection onto the tangent space of $\S^{d-1}$ at $x$, and the term $-\kiso(d-1)x/2$ represents the It\^o correction ensuring the particles remain on the sphere. By construction,
\begin{equ}
\Law(X_t \mid \mathcal F_t^B)=\Law(Y_t \mid \mathcal F_t^B)=\mu_t,
\end{equ}
and $X_t,Y_t$ are conditionally independent given $\mathcal F_t^B$.

As a consequence, using the conditional independence of $X_t$ and $Y_t$ given $\mathcal F_t^B$, we can write
\begin{equation*}
\mathcal E_\beta(\mu_t)
=
\frac 1 2 \iint \left(e^\beta -  e^{\beta\langle x,y\rangle}\right)\,\dd\mu_t(x)\dd\mu_t(y)
=
\frac 1 2 e^\beta
-
\frac 1 2 \E\bigl[e^{\beta\langle X_t,Y_t\rangle}\mid \mathcal F_t^B\bigr].
\end{equation*}

\subsection{Proof of Theorem~\ref{t:sync}}

By It\^o's formula for the two-particle process $(X_t,Y_t)$, the evolution of the energy is given by
\begin{align}\notag
\frac{d}{dt}&\E[\mathcal E_\beta(\mu_t)] \\\notag
&=
\underbrace{-\E\left[
\beta
\iint
e^{\beta\langle x,y\rangle}
\,y\cdot
P_x\Attn(\mu_t,x)
\,\dd\mu_t(x)\dd\mu_t(y)
\right]}_{(\text{I})} \\\label{e:ito_sync}
& \quad
\underbrace{+\E\left[
\kiso\frac{d-1}{2}
\iint \langle x,\nabla_x e^{\beta\langle x,y\rangle}\rangle\,\dd\mu_t(x)\dd\mu_t(y)
\right]}_{(\text{II})} \\
&\notag \quad
\underbrace{-\frac{1}{2}\E\left[
\sum_{k,\alpha}
\iint
(P_x\sigma_{k\alpha})^\top D^2_{xy} e^{\beta\langle x,y\rangle} P_y\sigma_{k\alpha}
\,\dd\mu_t(x)\dd\mu_t(y)
\right]}_{(\text{III})} \\
&\notag \quad
\underbrace{-\frac{1}{2}\E\left[
\sum_{k,\alpha}
\iint
\left(
\frac12 (P_x\sigma_{k\alpha})^\top D^2_{xx} e^{\beta\langle x,y\rangle} P_x\sigma_{k\alpha} + \frac12 (P_y\sigma_{k\alpha})^\top D^2_{yy} e^{\beta\langle x,y\rangle} P_y\sigma_{k\alpha}
\right)
\,\dd\mu_t(x)\dd\mu_t(y)
\right]}_{(\text{IV})}.
\end{align}
where $\nabla_x, D^2$ are the gradient and Hessian in ambient space with respect to the variables $x,y$.
We analyze the drift terms (I), (II), (III), and (IV) separately.

The first term (I) represents the contribution from the attention drift. By Assumption~\ref{ass:sync_forcing} we can control the argument of the expectation as 
\begin{align*}
-\beta
\iint
e^{\beta\langle x,y\rangle}
\,y\cdot
P_x\Attn(\mu,x)
\,\dd\mu(x)\dd\mu(y) \leq \epsilon \mathcal E_\beta(\mu)
\end{align*}
so that 
\begin{align}\label{e:sync_forcing}
(\text{I})
&
\leq \epsilon \mathbb E[\mathcal E_\beta(\mu_t)].
\end{align}

For the second term (II), using that $\nabla_x e^{\beta\langle x,y\rangle} = \beta e^{\beta\langle x,y\rangle}y$ we obtain
\begin{equation*}
(\text{II}) = \E\left[
\kiso\frac{d-1}{2}
\iint \beta e^{\beta u}u\,\dd\mu_t(x)\dd\mu_t(y)
\right],
\end{equation*}
where we denote $u = \langle x,y \rangle$.

The third term (III) involves the mixed trace contraction:
\begin{align*}
(\text{III})
&=
-\frac{1}{2}\E\left[
\sum_{k,\alpha}
\iint
\beta e^{\beta u}
\left(
(P_x\sigma_{k\alpha})\cdot(P_y\sigma_{k\alpha})
+
\beta\,(P_x\sigma_{k\alpha}\cdot y)(x\cdot P_y\sigma_{k\alpha})
\right)
\,\dd\mu_t(x)\dd\mu_t(y)
\right] \\
&=
-\frac{1}{2}\E\left[
\iint
\beta e^{\beta u}
\left(
\kiso(u)\Tr(P_xP_y)
+
\beta\kiso(u)\Tr(P_xyx^\top P_y)
\right)
\,\dd\mu_t(x)\dd\mu_t(y)
\right].
\end{align*}
Indeed, by \eqref{e:sigmaka},
\begin{equ}
\sum_{k,\alpha}(P_x\sigma_{k\alpha})\cdot(P_y\sigma_{k\alpha})
=
\sum_k \sigma_k(x)\sigma_k(y)\sum_\alpha (P_xe_\alpha)\cdot(P_ye_\alpha)
=
\kiso(u)\Tr(P_xP_y),
\end{equ}
and similarly
\begin{equ}
\sum_{k,\alpha}(P_x\sigma_{k\alpha}\cdot y)(x\cdot P_y\sigma_{k\alpha})
=
\sum_k \sigma_k(x)\sigma_k(y)\sum_\alpha (P_xe_\alpha\cdot y)(x\cdot P_ye_\alpha)
=
\kiso(u)\Tr(P_xyx^\top P_y).
\end{equ}
Using the properties $P_xx = 0$, $\Tr(P_x)=d-1$, and standard trace identities, we have:
\begin{align*}
\Tr(P_xP_y) = d - 2 + u^2\qquad \text{and} \qquad 
  \Tr(P_xyx^\top P_y) = -u(1-u^2)\,.
\end{align*}
Substituting these back:
\begin{align*}
(\text{III})
&=
-\frac{1}{2}\E\left[
\iint
\kiso(u)\beta e^{\beta u}
\left(d-2 + u^2 -\beta u(1-u^2)\right)
\,\dd\mu_t(x)\dd\mu_t(y)
\right].
\end{align*}

The fourth term (IV) involves the diagonal second-order contribution. Using
\begin{equ}
D^2_{xx} e^{\beta\langle x,y\rangle} = \beta^2 e^{\beta\langle x,y\rangle}yy^\top,
\qquad
D^2_{yy} e^{\beta\langle x,y\rangle} = \beta^2 e^{\beta\langle x,y\rangle}xx^\top,
\end{equ}
and, again by \eqref{e:sigmaka},
\begin{equ}
\sum_{k,\alpha} (P_x\sigma_{k\alpha}\cdot y)^2
=
\sum_k \sigma_k(x)^2 \sum_\alpha (P_xe_\alpha\cdot y)^2
=
\kiso\,|P_xy|^2
=
\kiso(1)(1-u^2),
\end{equ}

(and similarly for the term with $x$ and $y$ exchanged), we get
\begin{align*}
(\text{IV})
&=
-\frac{1}{2}\E\left[
\iint
\kiso\beta^2 e^{\beta u}(1-u^2)
\,\dd\mu_t(x)\dd\mu_t(y)
\right].
\end{align*}

We now combine terms (II), (III), and (IV), yielding
\begin{align}\label{e:sync_dissipation_total}
-\frac{1}{2}\E\left[
\iint
\beta e^{\beta u}
\left(
\kiso(u)\bigl[d-2 + u^2 - \beta u(1-u^2)\bigr]
+ \kiso\bigl[\beta(1-u^2) - (d-1)u\bigr]
\right)
\,\dd\mu_t(x)\dd\mu_t(y)
\right].
\end{align}
We complete the proof of Theorem~\ref{t:sync} with the following estimate:
\begin{lemma}\label{l:sync_main}
Let Assumption~\ref{ass:sync_dissipation} hold and let $d > 2$, then there exists a constant $\bar \lambda' < 0$ such that 
\begin{equ}
\label{eq:k_c_energy}
-\frac 1 2  \beta e^{\beta u}\left(\kiso(u)(d-2+u^2-\beta u(1-u^2))+\kiso(1)(\beta(1-u^2)-(d-1)u)\right)\leq \bar \lambda' \frac 1 2 (e^\beta-e^{\beta u})
\end{equ}
for every $u\in[-1,1]$.
\end{lemma}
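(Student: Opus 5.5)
Define
\[
F(u):=\tfrac12\beta e^{\beta u}\Bigl(\kiso(u)\bigl(d-2+u^2-\beta u(1-u^2)\bigr)+\kiso(1)\bigl(\beta(1-u^2)-(d-1)u\bigr)\Bigr),\qquad
E(u):=\tfrac12(e^\beta-e^{\beta u}).
\]
The claim is that $-F(u)\le\bar\lambda' E(u)$ on $[-1,1]$. Since $E>0$ on $[-1,1)$ and $E(1)=0$, we reduce to showing that $F(u)/E(u)$ is bounded below by a positive constant on $[-1,1)$. We then take $\bar\lambda' := -\inf_{[-1,1)} F/E$. At $u=1$ both sides vanish: $F(1)=\tfrac12\beta e^\beta\kiso(1)\bigl((d-1)-(d-1)\bigr)=0$. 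The proof therefore splits into a local analysis near $u=1$, where $F$ and $E$ vanish together, and a compactness argument on $[-1,1-\delta]$.

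\textbf{Near $u=1$.} Set $s=1-u$. Then $E(u)=\tfrac12\beta e^\beta s+O(s^2)$. Expanding $F$ to first order in $s$ with $\kiso(u)=\kiso(1)-\kiso'(1)s+o(s)$, and $u^2=1-2s+O(s^2)$, $u(1-u^2)=2s+O(s^2)$, $\beta(1-u^2)=2\beta s+O(s^2)$, $-(d-1)u=-(d-1)+(d-1)s$, gives
\[
F=\tfrac12\beta e^{\beta}\bigl[\kiso(1)\bigl((d-1)-2s-2\beta s\bigr)-\kiso'(1)(d-1)s+\kiso(1)\bigl(2\beta s-(d-1)+(d-1)s\bigr)\bigr]+O(s^2).
\]
This simplifies to $F=\tfrac12\beta e^\beta s\bigl[(d-3)\kiso(1)-(d-1)\kiso'(1)\bigr]+o(s)=-\tfrac12\beta e^\beta\bar\lambda\, s+o(s)$. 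Hence $F/E\to-\bar\lambda>0$ as $u\to1$, using \eqref{e:barl} of Assumption~\ref{ass:sync_dissipation}. To justify this expansion we need $\kiso\in C^1$ near $1$. For a Lipschitz activation, $\kiso'(1)$ is finite and equals $\frac1d\E[\act'(\cdot)^2]$, as stated later in Section~\ref{s:sync2}; alternatively, the Schoenberg series with nonnegative $C_n$ (Remark~\ref{r:schonberg}) gives a nondecreasing, convex-type behaviour. We obtain the bound on some interval $(1-\delta,1)$.

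\textbf{Away from $u=1$.} On $[-1,1-\delta]$, $E\ge c_\delta>0$ and $F$ is continuous, so it suffices to show $F(u)>0$ there. Dividing by $\tfrac12\beta e^{\beta u}$, we need
\[
g(u):=\kiso(u)\bigl(d-2+u^2-\beta u(1-u^2)\bigr)+\kiso(1)\bigl(\beta(1-u^2)-(d-1)u\bigr)>0\quad\text{for }u<1.
\]
Two facts are available. The first is $|\kiso(u)|\le\kiso(1)$ (Cauchy--Schwarz for a covariance). The second is the Schoenberg representation $\kiso(u)=\sum_n C_nN_{n,d}\omega_{d-1}^{-1}P_{n,d}(u)$ with $C_n\ge0$.

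\textbf{Main obstacle.} The crude bound from the first fact, with $u\ge0$ and using $-\beta u(1-u^2)\ge-\beta(1-u^2)$, gives only $g\ge(1-u)\bigl[(d-1)\kiso(1)+\dots\bigr]$ in favourable cases, and it fails in general for negative $u$ or large $\beta$. This is the step I expect to be hardest. The plan is to write $\kiso(u)=\kiso(1)-h(u)$ with $h\ge0$, so that
\[
g=\kiso(1)(1-u)(d-1)(1-u)+\dots-h(u)\bigl(d-2+u^2-\beta u(1-u^2)\bigr).
\]
We then use the Gegenbauer structure to bound $h(u)\le\kiso'(1)(1-u)\cdot c$. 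Since $P_{n,d}'$ is maximal at $1$ and $1-P_{n,d}(u)\le P'_{n,d}(1)(1-u)$, summing gives $h(u)\le\kiso'(1)(1-u)$. Combined with $\bar\lambda<0$, i.e.\ $(d-1)\kiso'(1)<(d-3)\kiso(1)$, this should yield positivity of $g/(1-u)$ on all of $[-1,1)$, uniformly. Handling the cubic factor in $\beta$ will require splitting into $u\ge0$ and $u<0$. In the latter range the $\kiso(1)\beta(1-u^2)$ term dominates $|\kiso(u)\beta u(1-u^2)|$ because $|\kiso(u)|\le\kiso(1)$ and $|u|\le1$.

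At $u=-1$ we get $g(-1)=(\kiso(-1)+\kiso(1))(d-1)$, which is zero only if $\kiso(-1)=-\kiso(1)$, i.e.\ if the field is a.s.\ odd. This is exactly excluded by the non-oddness part of Assumption~\ref{ass:sync_dissipation}, so $g(-1)>0$. Finally, taking $\bar\lambda'$ as the negative of the minimum of $F/E$ over the two regions gives the claimed bound.
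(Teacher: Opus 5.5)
Your architecture is the paper's: the expansion at $u=1$ giving $F/E\to-\bar\lambda$, the bound $|\kiso(u)|\le\kiso(1)$, the inequality $1-P_{n,d}(u)\le P'_{n,d}(1)(1-u)$ summed against the nonnegative Gegenbauer coefficients (your derivation of it from $|P'_{n,d}|\le P'_{n,d}(1)$ is fine), and compactness. However, the step that carries the lemma, positivity of $g$ on $[-1,1)$, is only announced (``this should yield''), and the sketch you give for it has two defects.

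First, the only $\kiso(1)$-term you display is wrong. Replacing $\kiso(u)$ by $\kiso(1)$ in the $\beta$-free part gives $\kiso(1)\bigl(d-2+u^2-(d-1)u\bigr)=\kiso(1)(1-u)(d-2-u)$. This is of \emph{first} order in $1-u$, not $\kiso(1)(d-1)(1-u)^2$. Its leading part $(d-3)\kiso(1)(1-u)$ is exactly what must beat $h(u)(d-2+u^2)\le(d-1)\kiso'(1)(1-u)$; with your $(1-u)^2$ the comparison fails near $u=1$. Second, you apply $h\le\kiso'(1)(1-u)$ against the whole bracket $d-2+u^2-\beta u(1-u^2)$. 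That bracket is negative for large $\beta$ and $u\in(0,1)$, so the step needs a case distinction you have not made.

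The clean fix, which is the paper's, is not to expand $\kiso$ inside the $\beta$-terms. Write $g=G_0+\beta G_1$ with $G_1(u)=(1-u^2)\bigl(\kiso(1)-u\kiso(u)\bigr)$, which is $\ge0$ for \emph{all} $u\in[-1,1]$ by $|u\kiso(u)|\le\kiso(1)$; no split by the sign of $u$ is needed. The remaining part is $G_0(u)=\kiso(u)(d-2+u^2)-(d-1)u\kiso(1)$. Since $d-2+u^2>0$, the bound $\kiso(u)\ge\kiso(1)-\kiso'(1)(1-u)$ gives
\[
G_0(u)\ge(1-u)\bigl[\kiso(1)(d-2-u)-\kiso'(1)(d-2+u^2)\bigr].
\]
The bracket is concave in $u$ because $\kiso'(1)\ge0$. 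It equals $-\bar\lambda>0$ at $u=1$ and $(d-1)\bigl(\kiso(1)-\kiso'(1)\bigr)>0$ at $u=-1$, since $\kiso'(1)<\frac{d-3}{d-1}\kiso(1)$. The paper reaches the same conclusion pointwise via the factorization $\frac{2+(d-3)u}{d-1}-\frac{(d-1)u}{d-2+u^2}=\frac{(1-u)^2((d-3)u+2d-4)}{(d-1)(d-2+u^2)}$.

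Either way you get $g(u)\ge c(1-u)$ on $[-1,1]$ with $c>0$. Combined with $e^\beta-e^{\beta u}\le\beta e^\beta(1-u)$, this gives the inequality directly with $\bar\lambda'=-ce^{-2\beta}$, so the separate local expansion and compactness step become unnecessary. Finally, your appeal to non-oddness for $g(-1)>0$ is harmless but redundant: $\bar\lambda<0$ already forces $\kiso(-1)\ge\kiso(1)-2\kiso'(1)>-\kiso(1)$.
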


Indeed, by combining \eqref{e:sync_forcing} and \eqref{eq:k_c_energy} with \eqref{e:ito_sync}, we obtain:
\begin{equation*}
\frac{d}{dt}\E[\mathcal E_\beta(\mu_t)] \leq (\epsilon +\bar \lambda') \E[\mathcal E_\beta(\mu_t)].
\end{equation*}
By Gr\"onwall's lemma, this implies exponential synchronization speed:
\begin{equation*}
\E[\mathcal E_\beta(\mu_t)] \leq \exp((\epsilon + \bar \lambda') t)\,\E[\mathcal E_\beta(\mu_0)].
\end{equation*}\qed

To prove the missing Lemma~\ref{l:sync_main}, we recall that by Remark~\ref{r:schonberg} the kernel $\kiso$ admits the following decomposition:
\begin{equ}\label{e:decomposition}
\kiso(u)=\sum_{n=0}^{\infty}c_nP_{n,d}(u),
\end{equ}
where $P_{n,d}$ are the normalized Gegenbauer polynomials of order $n$ in dimension $d$ (so that $P_{n,d}(1) = 1$), and by Sch\"onberg theorem \cite{schoenberg} $\{c_n\}_{n \geq 1}$ are nonnegative constants.

\begin{proof}[Proof of Lemma~\ref{l:sync_main}]
We define
\begin{equ}\label{e:FG}
\bar \lambda' := - \inf_{u \in [-1,1]} F(u),
\qquad
F(u):=
\frac{\beta e^{\beta u}G(u)}
{e^\beta-e^{\beta u}},
\end{equ}
where
\begin{equ}
G(u)
:=
\kiso(u)\bigl(d-2+u^2-\beta u(1-u^2)\bigr)
+
\kiso(1)\bigl(\beta(1-u^2)-(d-1)u\bigr).
\end{equ}
We decompose
\begin{equ}
G(u)=G_0(u)+\beta G_1(u),
\end{equ}
where
\begin{align}\label{e:G}
G_0(u)
&:=
\kiso(u)(d-2+u^2)-\kiso(1)(d-1)u,\\
G_1(u)
&:=
(1-u^2)(\kiso(1)-u\kiso(u)).
\end{align}

We prove that \(G(u)>0\) for every \(u\in[-1,1)\), and then study the limit
as \(u\to1^-\).
First, since $|P_{n,d}(u)|\leq 1 = P_{n,d}(1)$ \cite[Section 4.7]{szeg1939orthogonal}, the decomposition \eqref{e:decomposition} yields
\begin{equ}
|\kiso(u)|\le \sum_{n=0}^{\infty} c_n |P_{n,d}(u)| \leq \sum_{n=0}^{\infty} c_n P_{n,d}(1)=  \kiso(1),
\qquad u\in[-1,1].
\end{equ}
Hence
\begin{equ}
u\kiso(u)\le |u|\,|\kiso(u)|\le \kiso(1),
\end{equ}
and therefore
\begin{equ}\label{e:G1}
G_1(u)=(1-u^2)(\kiso(1)-u\kiso(u))\ge0,
\qquad u\in[-1,1].
\end{equ}

We now prove positivity of \(G_0\). Since
\begin{equ}
G_0(u)
=
\kiso(1)(d-2+u^2)
\left(
\frac{\kiso(u)}{\kiso(1)}
-
\frac{(d-1)u}{d-2+u^2}
\right),
\end{equ}
and \(\kiso(1)(d-2+u^2)>0\), it is enough to show that
\begin{equ}\label{e:intermediate}
\frac{\kiso(u)}{\kiso(1)}
>
\frac{(d-1)u}{d-2+u^2},
\qquad u\in[-1,1).
\end{equ}
To prove this, we use the inequality from \cite[Eq. 1]{hrycak2019inequalities}
\begin{equ}
P_{n,d}(u)\ge 1-P'_{n,d}(1)(1-u),
\qquad u\in[-1,1]\,.
\end{equ}
so that we have
\begin{equ}
\kiso(u)
=
\sum_{n=0}^{\infty}c_nP_{n,d}(u)
\ge
\sum_{n=0}^{\infty}c_n
-
(1-u)\sum_{n=0}^{\infty}c_nP'_{n,d}(1)=\kiso(1)-\kiso'(1)(1-u),
\end{equ}
Then, by Assumption~\ref{ass:sync_dissipation} we have $\kiso'(1) < \kiso(1)(d-3)/(d-1)$ so that 
\begin{equ}
\kiso(u) > \kiso(1)\left(1 - \frac{d-3}{d-1}(1-u) \right) = \kiso(1) \frac {2 + (d-3)u}{d-1} 
\end{equ}
We finally obtain the desired bound \eqref{e:intermediate} by noting that for $u \in [-1,1)$ we have
\begin{align*}
\frac{2+(d-3)u}{d-1}
-
\frac{(d-1)u}{d-2+u^2}=
\frac{(1-u)^2\bigl((d-3)u+2d-4\bigr)}
{(d-1)(d-2+u^2)} \geq \frac{(1-u)^2}
{d-2+u^2} > 0.
\end{align*}
where we used that for \(u\in[-1,1]\) and \(d>3\),
\begin{equ}
(d-3)u+2d-4\ge d-1>0.
\end{equ}
Combining $G_0 > 0$ from \eqref{e:intermediate} with \eqref{e:G1}, this gives
\begin{equ}
G(u)=G_0(u)+\beta G_1(u)>0,
\qquad u\in[-1,1).
\end{equ}
and \(F(u)>0\) for every \(u\in[-1,1)\) as claimed.

It remains to study the limit as \(u\to1^-\). Since \(G_1(u)=O((1-u)^2)\), we
only need the first-order expansion of \(G_0\):
\begin{equ}
G_0(u)
=
\bigl(\kiso(1)(d-3)-\kiso'(1)(d-1)\bigr)(1-u)
+
o(1-u).
\end{equ}
Moreover,
\begin{equ}
e^\beta-e^{\beta u}
=
\beta e^\beta(1-u)+o(1-u).
\end{equ}
Therefore
\begin{equ}
\lim_{u\to1^-}F(u)
=
\kiso(1)(d-3)-\kiso'(1)(d-1)
=
-\bar\lambda
>0.
\end{equ}
Hence \(F\) extends continuously and positively to \([-1,1]\). Since the
extended \(F\) is continuous on the compact interval \([-1,1]\), we obtain
\begin{equ}
\inf_{u\in[-1,1]}F(u)>0.
\end{equ}
Thus
\begin{equ}
\bar\lambda':=-\inf_{u\in[-1,1]}F(u)<0.
\end{equ}
\end{proof}

\subsection{Contraction rates and Lyapunov exponents for general activations}\label{s:sync2}

In this section we aim to precisely state and prove Proposition \ref{p:le}. 
Recall from Section \ref{s:noise} that the limiting Gaussian process $(G_\ell(x))_{x\in\R^d}$ has covariance $K(x,y) \Id$, where:
\begin{equ}\label{eq:starK}
K(x,y)=\E\left[\act(Z(x))\act(Z(y))\right]+\varbW^2,
\end{equ}
and $\big(Z(x)\big)_{x\in\R^d}$ is a scalar field with covariance
\begin{equ}
\E[Z(x) Z(y)]=\frac{1}{d}\,\ip{x}{y}+\varbU^2.
\end{equ}
Fix $x,y\in\R^d$ and set $t=\ip{x}{y}$ and, for brevity, 
\begin{equ}
u(t):=\sqrt{\frac{1+t}{2d}},\qquad v(t):=\sqrt{\frac{1-t}{2d}}.
\end{equ}
If $X,Y$ and $b$ are independent real standard Gaussian variables, then
\begin{equ}\label{eq:zxzy}
\begin{aligned}
Z(x) &\stackrel{\text{law}}{=}  u(t)\,X + v(t)\,Y+\varbU b,\\
Z(y) &\stackrel{\text{law}}{=}  u(t)\,X - v(t)\,Y+\varbU b.
\end{aligned}
\end{equ}
We can then write
\begin{equ}\label{eq:alpha}
\kiso(t)=\E\left[\act\big(u(t)X+v(t)Y+\varbU b\big)\,\act\big(u(t)X-v(t)Y+\varbU b\big)\right]+\varbW^2.
\end{equ}
In particular, for $t=1$ we have $u(1)=\frac{1}{\sqrt d}$ and $v(1)=0$, hence
\begin{equ}
    \kiso(1)=\E\left[\act\left(\frac{1}{\sqrt d}X+\varbU b\right)^2\right]+\varbW^2. 
\end{equ}
\begin{lemma}
\label{lem:alpha-prime}
    Assume that $\act$ satisfies Assumption \ref{ass1} and is almost everywhere twice differentiable. Then the following equality holds:
    \begin{equ}
        \lim_{t\to1^-}\kiso'(t)=\frac{1}{d}\E\left[\act'\left(\frac{1}{\sqrt d}X+\varbU b\right)^2\right].
    \end{equ}
\end{lemma}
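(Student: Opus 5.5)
We compute $\kiso'(t)$ for $t\in(-1,1)$ by differentiating the representation \eqref{eq:alpha}, and then pass to the limit $t\to1^-$. The natural tool is Gaussian integration by parts, i.e.\ the Price/Stein identity for correlated Gaussians. Write $A=u(t)X+v(t)Y+\varbU b$ and $B=u(t)X-v(t)Y+\varbU b$. Then $(A,B)$ is a centered Gaussian pair with variances $\frac1d+\varbU^2$, independent of $t$, and covariance $\rho(t)=\frac td+\varbU^2$. Only the off-diagonal entry of the covariance depends on $t$, and $\rho'(t)=\frac1d$.

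Price's theorem gives $\partial_\rho\E[\act(A)\act(B)]=\E[\act'(A)\act'(B)]$ for Lipschitz $\act$, whenever the covariance matrix is nondegenerate. This is the case for $|t|<1$, since $v(t)>0$. Hence
\begin{equ}
\kiso'(t)=\frac1d\,\E\left[\act'(A)\act'(B)\right],\qquad t\in(-1,1).
\end{equ}
To justify this rigorously with only Lipschitz $\act$, I would first mollify, $\act_\varepsilon=\act*\eta_\varepsilon$. For smooth $\act_\varepsilon$ the identity follows from differentiating the Gaussian density in $\rho$, using $\partial_\rho p=\partial_a\partial_b p$ and integrating by parts twice. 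I would then let $\varepsilon\to0$. Here $\act_\varepsilon'\to\act'$ a.e.\ with uniform bound $L_\act$, and the law of $(A,B)$ is absolutely continuous for $|t|<1$, so dominated convergence applies. The function values converge as well, giving convergence of $\kiso_\varepsilon'$ locally uniformly in $t$, which identifies the derivative.

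It remains to take $t\to1^-$. As $t\to1$, $u(t)\to1/\sqrt d$ and $v(t)\to0$, so $A,B\to A_1:=\frac1{\sqrt d}X+\varbU b$ almost surely. The integrand is bounded by $L_\act^2$, so by dominated convergence it suffices to show $\act'(A)\act'(B)\to\act'(A_1)^2$ almost surely, or at least in probability. This is the main obstacle, because $\act'$ may be discontinuous (e.g.\ ReLU).

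I would handle it as follows. The law of $A_1$ is absolutely continuous, being Gaussian with positive variance $1/d>0$. The discontinuity set $D$ of $\act'$ is Lebesgue-null under the a.e.\ twice-differentiability assumption. For the clean version I would assume, as for piecewise-smooth activations, that $\act'$ is continuous off a closed null set $D$, or more generally that $D$ is null. Then $\P(A_1\in D)=0$, and off this event $\act'$ is continuous at $A_1$. Since $A\to A_1$ and $B\to A_1$, this gives $\act'(A)\to\act'(A_1)$ and $\act'(B)\to\act'(A_1)$.

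If $\act'$ is merely an a.e.-defined $L^\infty$ function, I would instead approximate $\act'$ in $L^1_{\mathrm{loc}}$ by a continuous $g$ with $|g|\le L_\act$. The marginal densities of $A$ and $B$ are the fixed Gaussian density of variance $\frac1d+\varbU^2$, independent of $t$. Therefore $\E|\act'(A)-g(A)|$ and $\E|\act'(B)-g(B)|$ are small uniformly in $t$, and for $g$ the limit is immediate. Letting the approximation error go to zero yields
\begin{equ}
\lim_{t\to1^-}\kiso'(t)=\frac1d\E\left[\act'(A_1)^2\right].
\end{equ}
Since $A_1$ has the same law as $\frac1{\sqrt d}X+\varbU b$, this is the claim.
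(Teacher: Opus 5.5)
Your proposal is correct and follows the paper's route: the Gaussian covariance-derivative (Price) formula, in which only the off-diagonal entry $1/d$ of $\Sigma_t'$ is nonzero, gives $\kiso'(t)=\frac1d\E[\act'(A)\act'(B)]$ for $|t|<1$, and one then lets $t\to1^-$. Your mollification step and your approximation of $\act'$ by bounded continuous functions in $L^1$ of the $t$-independent Gaussian marginal make rigorous what the paper handles with a one-line appeal to dominated convergence, and they show that only first derivatives of $\act$ are actually needed, not a.e.\ twice-differentiability.
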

\begin{proof}
    This is a special case of the Gaussian covariance-derivative formula. Consider a Gaussian variable $Z_t$ with covariance matrix $\Sigma_t$, if $f$ is twice differentiable almost everywhere and has at most polynomial growth of its derivatives, we have 
    \begin{equ}
        \frac{\dd}{\dd t}\E[f(Z_t)]=\frac{1}{2}\E\left[\mathrm{Tr}\left(\Sigma_t'\nabla^2f(Z_t)\right)\right].
        \label{eq:covariance-derivative}
    \end{equ}
    In our case, we have $Z_t=(U_t,V_t)\sim\mathcal{N}(0,\Sigma_t)$ with 
    \begin{equ}
        \Sigma_t=\begin{pmatrix}
        1/d+\varbU^2&t/d+\varbU^2\\
        t/d+\varbU^2&1/d+\varbU^2
    \end{pmatrix}\,.
    \end{equ}
    We then define $f(U_t,V_t)=\act(U_t)\act(V_t)$.
    We can then compute the Hessian
    \begin{equ}
        \nabla^2f(x,y)=\begin{pmatrix}
            \act''(x)\act(y)&\act'(x)\act'(y)\\\act'(x)\act'(y)&\act(x)\act''(y)
        \end{pmatrix}
    \end{equ}
    and
    \begin{equ}
        \Sigma_t'=\begin{pmatrix}
            0&d^{-1}\\d^{-1}&0
        \end{pmatrix}.
    \end{equ}
    Applying \eqref{eq:covariance-derivative} we get exactly
    \begin{equ}
        \kiso'(t)=\frac{1}{d}\E\left[\act'\big(u(t)X+v(t)Y+\varbU b\big)\,\act'\big(u(t)X-v(t)Y+\varbU b\big)\right].
    \end{equ}
    Taking the limit for $t\to1$ from the left, that due to the hypothesis on $\act$ we can pass under the integral by dominated convergence, we get 
\begin{equ}
        \kiso'(1)=\frac{1}{d}\E\left[ \act'\left(\frac{1}{\sqrt d}X+\varbU b\right)^2 \right],
    \end{equ}
    hence concluding the proof.
\end{proof}

We recall the last result we prove, Proposition~\ref{p:le}:

\begin{proposition}
Let $\act(y) = \max\{0,y\}$ be the ReLU activation, then 
\begin{equ}
    \lambda_1 = -\frac{1}{2d}-\frac{(\varbU^2+2\varbW^2)(d-1)}{4}<0,
\end{equ}
and 
\begin{equ}
    \bar \lambda = \frac 1 d - \frac {(\varbU^2+2\varbW^2)(d-3)}{2}.
\end{equ}
\end{proposition}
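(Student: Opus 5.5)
The plan is to evaluate the two scalars $\kiso(1)$ and $\kiso'(1)$ for the ReLU activation and substitute them into the definitions
\begin{equ}
\lambda_1=\frac{d-3}{2}\kiso'(1)-\frac{d-1}{2}\kiso(1),
\qquad
\bar\lambda=(d-1)\kiso'(1)-(d-3)\kiso(1),
\end{equ}
from \eqref{e:lambda} and \eqref{e:barl}. Everything else is algebra.

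\textbf{Step 1: $\kiso(1)$.} By the formula derived just before Lemma~\ref{lem:alpha-prime}, $\kiso(1)=\E[\act(Z)^2]+\varbW^2$, where $Z=\frac1{\sqrt d}X+\varbU b$ is a centered Gaussian with variance $s^2:=\frac1d+\varbU^2$. For ReLU, the symmetry of $Z$ gives $\E[\max\{0,Z\}^2]=\frac12\E[Z^2]=\frac{s^2}{2}$. Hence
\begin{equ}
\kiso(1)=\frac12\Bigl(\frac1d+\varbU^2+2\varbW^2\Bigr).
\end{equ}

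\textbf{Step 2: $\kiso'(1)$.} ReLU is Lipschitz, non-polynomial, not a.e.\ constant, and a.e.\ twice differentiable, so Lemma~\ref{lem:alpha-prime} applies and gives
\begin{equ}
\kiso'(1)=\frac1d\E\bigl[\mathbf 1_{\{Z>0\}}\bigr]=\frac1{2d},
\end{equ}
again by symmetry of $Z$.

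The one delicate point is that $\act''$ is a Dirac mass at $0$ rather than zero, so the Hessian in the covariance-derivative formula needs care. I would observe that $\Sigma_t'$ is purely off-diagonal, so only the mixed entry $\act'(x)\act'(y)$ of $\nabla^2 f$ enters the trace. That entry involves only first derivatives, which are bounded, so the argument goes through for ReLU; equivalently, one can run it with Gaussian integration by parts once in each variable.

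As a sanity check I would use the case $\varbU=0$, where the arc-cosine kernel gives the closed form
\begin{equ}
\kiso(t)=\frac1{2\pi d}\Bigl(\sqrt{1-t^2}+(\pi-\arccos t)\,t\Bigr)+\varbW^2,
\qquad
\kiso'(t)=\frac{\pi-\arccos t}{2\pi d}\to\frac1{2d}\ \text{ as } t\to1^-.
\end{equ}
This matches Step 2.

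\textbf{Step 3: algebra.} Substituting,
\begin{equ}
\bar\lambda=\frac{d-1}{2d}-\frac{d-3}{2}\Bigl(\frac1d+\varbU^2+2\varbW^2\Bigr)=\frac1d-\frac{(\varbU^2+2\varbW^2)(d-3)}{2},
\end{equ}
\begin{equ}
\lambda_1=\frac{d-3}{4d}-\frac{d-1}{4}\Bigl(\frac1d+\varbU^2+2\varbW^2\Bigr)=-\frac1{2d}-\frac{(\varbU^2+2\varbW^2)(d-1)}{4}.
\end{equ}
The expression for $\lambda_1$ is strictly negative because $d\ge2$ and the variances are nonnegative.

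The only step needing genuine justification is the application of Lemma~\ref{lem:alpha-prime} to the non-smooth ReLU in Step 2; I would handle it through the off-diagonal structure of $\Sigma_t'$ as described there.
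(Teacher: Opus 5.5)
Your proposal is correct and follows the paper's proof. You compute $\kiso(1)=\frac12\bigl(\frac1d+\varbU^2\bigr)+\varbW^2$ directly and $\kiso'(1)=\frac1{2d}$ via Lemma~\ref{lem:alpha-prime}, then substitute into \eqref{e:lambda} and \eqref{e:barl}. Your observation that $\Sigma_t'$ is purely off-diagonal, so only the bounded mixed entry $\act'(x)\act'(y)$ enters and the Dirac mass in $\act''$ is harmless, usefully clarifies a point the paper's proof of that lemma passes over; the arc-cosine check with $\varbU=0$ is a good consistency test.
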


\begin{proof}
    Using the formula given in \cite[Thm. 3.1]{raimond99} and the results in the previous lemma we can readily compute 
\begin{align*}
    \lambda_{1} &= \frac {d-3}2 \kiso'(1) - \frac{d-1}2 \kiso(1)\\
    & = \frac {d-3}2\frac{1}{d}\E\left[ \act'\left(\frac{1}{\sqrt d}X+\varbU b\right)^2 \right]- \frac{d-1}2 \left( \E[\act\left(\frac{1}{\sqrt d}X+\varbU b\right)^2]+\varbW^2 \right)\\
    &=\frac{d-3}{4d}-\frac{d-1}2\left(\frac{1}{2}\left(\frac{1}{d}+\varbU^2\right)+\varbW^2\right)\\
    &=\frac{d-3}{4d}-\frac{d-1}{4d}(1+d(\varbU^2+2\varbW^2)) = \frac{-2-d(\varbU^2+2\varbW^2)(d-1)}{4d}<0.
\end{align*}
    Next, we derive the explicit formula for $\bar\lambda$. Using the results from the previous lemma, we compute
    \begin{align*}
        \bar \lambda := & (d-1)\kiso'(1) - (d-3) \kiso(1)\\
        = & (d-1)\frac{1}{d}\E\left[ \act'\left(\frac{1}{\sqrt d}X+\varbU b\right)^2 \right]- (d-3) \left( \E[\act\left(\frac{1}{\sqrt d}X+\varbU b\right)^2]+\varbW^2 \right)\\
        = & \frac{d-1}{2d}-(d-3)\left(\frac{1}{2}\left(\frac{1}{d}+\varbU^2\right)+\varbW^2\right)\\
        =&\frac{d-1}{2d}-\frac{d-3}{2d}(1+d(\varbU^2+2\varbW^2)) = \frac{2-d(\varbU^2+2\varbW^2)(d-3)}{2d}.
    \end{align*}
    This explicitly yields the desired formula and concludes the proof.
\end{proof}

\section*{Acknowledgements}

AA and GB thank N.~Karagodin,  J.~C.~Mattingly, M.~Scardala, and D.~Trevisan for insightful discussions during the preparation of this work. AA and GB acknowledge the partial support of the Swiss National Science Foundation project grant 2000-1-243054. EMG and SS thank the Institute for Mathematical Statistics and Actuarial Sciences at the University of Bern for hosting them at the beginning of this project. 
This research was supported by a Ph.D.\ fellowship funded by the Italian Ministry of University and Research (MUR) under the PNRR program “Transizioni digitali e ambientali (TDA)” (D.M.\ n.~118/2023, CUP~I51J23000400007), within the project “Limiti di scala di dinamiche stocastiche.”
MR acknowledges the partial support of the project PNRR - M4C2 - Investimento 1.3, Partenariato Esteso PE00000013 - \emph{FAIR - Future Artificial Intelligence Research} - Spoke 1 \emph{Human-centered AI}, funded by the European Commission under the NextGeneration EU programme, and of the MUR Excellence Department Project awarded to the Department of Mathematics, University of Pisa, CUP I57G22000700001.

\bibliographystyle{plain}
\bibliography{ref}

\appendix

\section{Technical Lemmas for Section~\ref{sec:deep_lim}}
\label{app:technical-proofs}

\subsection{Proof of Lemma~\ref{lem:regularity_projected_coefficients}}
\label{s:pl24}

We repeatedly use that the truncation map \(T\), the cutoff \(\rho\), and the
truncated projection \(x\mapsto \rproj{x}\) are globally Lipschitz and bounded,
and that \(T(\mathbb R^d)\subset B(0,3)\).

By Assumption~\ref{ass:att_reg}, the attention map is Lipschitz on bounded
sets. Since \(T(x^i),T(y^i)\in B(0,3)\), the regularized attention
\begin{equ}
\rattN{}{x}:=\att{\emp{}{T(x)}}{T(x^i)}
\end{equ}
satisfies
\begin{equ}
\frac1N\sum_{i=1}^N
|\rattN{}{x}-\rattN{}{y}|^2
\le
\mathsf c\,
\frac1N\sum_{i=1}^N |x^i-y^i|^2.
\end{equ}
Moreover, \(\rattN{}{x}\) is uniformly bounded, since it is evaluated on the
fixed compact set \(B(0,3)\).

Recalling that
\begin{equ}
\rdriftN{}{x}
=
\rproj{x^i}\rattN{}{x}
-
\rho(x^i)^2\kerRd(x^i,x^i)\frac{d-1}{2}x^i,
\end{equ}
we note that the map
\begin{equ}
x\longmapsto \rho(x)^2\kerRd(x,x)\frac{d-1}{2}x
\end{equ}
is globally Lipschitz, because \(\rho\) has compact support and \(\kerRd\) is
Lipschitz on \(B(0,3)\). Using the decomposition
\begin{align*}
\rproj{x^i}\rattN{}{x}-\rproj{y^i}\rattN{}{y}
&=
\rproj{x^i}\bigl(\rattN{}{x}-\rattN{}{y}\bigr)\\
&\quad
+
\bigl(\rproj{x^i}-\rproj{y^i}\bigr)\rattN{}{y},
\end{align*}
together with the boundedness and Lipschitz continuity of \(\rproj{\cdot}\),
we obtain
\begin{equ}
|\rdriftN{}{x}-\rdriftN{}{y}|^2
\le
\mathsf c\,|\rattN{}{x}-\rattN{}{y}|^2
+
\mathsf c\,|x^i-y^i|^2.
\end{equ}
Summing over \(i\) and using the previous bound for
\(\rattN{}{x}-\rattN{}{y}\) gives
\begin{equ}
\frac1N\sum_{i=1}^N
|\rdriftN{}{x}-\rdriftN{}{y}|^2
\le
\mathsf c\,
\frac1N\sum_{i=1}^N |x^i-y^i|^2.
\end{equ}

We next use the identities
\begin{equ}
\sum_{k=1}^{\infty} |\rproj{x}\sigma_k(x)|^2
=
\E\bigl[|\rproj{x}G_\ell(x)|^2\bigr],
\qquad
\sum_{k=1}^{\infty} |\rproj{x}\sigma_k(x)-\rproj{y}\sigma_k(y)|^2
=
\E\bigl[|\rproj{x}G_\ell(x)-\rproj{y}G_\ell(y)|^2\bigr].
\end{equ}
If \(x\notin B(0,3)\), then \(\rproj{x}=0\), so the first estimate is
immediate. If \(x\in B(0,3)\), then by boundedness of \(\rproj{x}\) and
Lemma~\ref{lem:unif_gl},
\begin{equ}
\E\bigl[|\rproj{x}G_\ell(x)|^2\bigr]
\le
\|\rproj{x}\|_{\mathrm{op}}^2\,\E[|G_\ell(x)|^2]
\le
\mathsf c,
\end{equ}
which proves
\begin{equ}
\sum_{k=1}^{\infty} |\rproj{x}\sigma_k(x)|^2 \le \mathsf c.
\end{equ}

If both \(x\) and \(y\) lie outside \(B(0,3)\), then
\(\rproj{x}=\rproj{y}=0\), so the Lipschitz estimate is trivial. If
\(x,y\in B(0,3)\), we write
\begin{equ}
\rproj{x}G_\ell(x)-\rproj{y}G_\ell(y)
=
\rproj{x}\bigl(G_\ell(x)-G_\ell(y)\bigr)
+
(\rproj{x}-\rproj{y})G_\ell(y).
\end{equ}
Hence,
\begin{equ}
\E\bigl[|\rproj{x}G_\ell(x)-\rproj{y}G_\ell(y)|^2\bigr]
\le
2\|\rproj{x}\|_{\mathrm{op}}^2\,\E[|G_\ell(x)-G_\ell(y)|^2]
+
2\|\rproj{x}-\rproj{y}\|_{\mathrm{op}}^2\,\E[|G_\ell(y)|^2].
\end{equ}
The first term is bounded by \(\mathsf c|x-y|^2\) using
Lemma~\ref{lem:unif_gl}; the second is bounded by \(\mathsf c|x-y|^2\) using
the Lipschitz continuity of \(x\mapsto \rproj{x}\) and the moment bound for
\(G_\ell\) on \(B(0,3)\). This proves the estimate when \(x,y\in B(0,3)\).

Finally, if one point lies in \(B(0,3)\) and the other outside, say
\(x\in B(0,3)\) and \(y\notin B(0,3)\), then either \(|x-y|\ge1\), in which
case the bound follows from the uniform growth estimate, or one inserts an
intermediate point on \(\partial B(0,3)\) and applies the previous case
together with the triangle inequality. This concludes the proof.
\qed

\subsection{Proof of Lemma~\ref{lem:norm_res}}\label{app:taylor}

We start by computing the Taylor expansion of the normalization operator to a sufficiently high order:
\begin{lemma}
\label{lem:taylor_norm}
Let $\mathbf{v} \in \mathbb{S}^{d-1}$ and $\mathbf{w} \in \mathbb{R}^{d}$. The layer normalization map $\LN(\mathbf{u}) = \frac{\mathbf{u}}{|\mathbf{u}|}$ satisfies the following expansions:
\begin{align*}
    \LN(\mathbf{v}+\mathbf{w}) &= \mathbf{v} + P_{\mathbf{v}}\mathbf{w} + \mathbf{R}_1, \\
    \LN(\mathbf{v}+\mathbf{w}) &= \mathbf{v} + P_{\mathbf{v}}\mathbf{w} - \langle \mathbf{v}, \mathbf{w} \rangle P_{\mathbf{v}}\mathbf{w} - \frac{1}{2}|P_{\mathbf{v}}\mathbf{w}|^2 \mathbf{v} + \mathbf{R}_2,
\end{align*}
where the remainders are bounded by:
\begin{align*}
     |\mathbf{R}_1| &\le \frac{3 |\mathbf{w}|^2}{\left( \min_{t \in [0,1]} |\mathbf{v} + t\mathbf{w}| \right)^2}, \\
     |\mathbf{R}_2| &\le \frac{6 |\mathbf{w}|^3}{\left( \min_{t \in [0,1]} |\mathbf{v} + t\mathbf{w}| \right)^3}.
\end{align*}
\end{lemma}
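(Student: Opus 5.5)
Both expansions follow from Taylor's theorem with integral remainder applied to $g(t):=\LN(\mathbf v+t\mathbf w)$, $t\in[0,1]$. We work under the standing assumption (footnote to \eqref{eq:transformer-update}) that the segment avoids the origin, so $r:=\min_{t\in[0,1]}|\mathbf v+t\mathbf w|>0$ and $g$ is smooth on $[0,1]$. Throughout write $\mathbf u(t)=\mathbf v+t\mathbf w$, $\mathbf n(t)=\mathbf u(t)/|\mathbf u(t)|$ and $a(t)=\langle \mathbf n(t),\mathbf w\rangle$.

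\textbf{Derivatives.} The plan is to compute the derivatives of $g$ in closed form. Differentiating,
\begin{equ}
g'(t)=\frac{P_{\mathbf n(t)}\mathbf w}{|\mathbf u(t)|}.
\end{equ}
Differentiating once more, and using that $\frac{d}{dt}|\mathbf u|^{-1}=-a/|\mathbf u|^2$ and $\frac{d}{dt}P_{\mathbf n}\mathbf w=-(\mathbf n'\cdot\mathbf w)\mathbf n-a\,\mathbf n'$ with $\mathbf n'=P_{\mathbf n}\mathbf w/|\mathbf u|$, we expect
\begin{equ}
g''(t)=-\frac{2a\,P_{\mathbf n}\mathbf w+|P_{\mathbf n}\mathbf w|^2\mathbf n}{|\mathbf u|^2}.
\end{equ}
At $t=0$ we have $|\mathbf u|=1$ and $\mathbf n=\mathbf v$. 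This gives $g'(0)=P_{\mathbf v}\mathbf w$ and $\tfrac12 g''(0)=-\langle\mathbf v,\mathbf w\rangle P_{\mathbf v}\mathbf w-\tfrac12|P_{\mathbf v}\mathbf w|^2\mathbf v$. These are exactly the terms displayed in the statement.

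\textbf{Remainder bounds.} With $\mathbf R_1=\int_0^1(1-t)g''(t)\,dt$, it suffices to note that $|a|\le|\mathbf w|$ and $|P_{\mathbf n}\mathbf w|\le|\mathbf w|$. Hence $|g''|\le 3|\mathbf w|^2/|\mathbf u|^2\le 3|\mathbf w|^2/r^2$, and since $\int_0^1(1-t)\,dt\le1$ this yields the first bound.

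For $\mathbf R_2=\tfrac12\int_0^1(1-t)^2g'''(t)\,dt$, the plan is to differentiate the expression for $g''$ once more. Each term of $g''$ is a product of one factor $|\mathbf u|^{-2}$ with quantities in $\{a,\,P_{\mathbf n}\mathbf w,\,\mathbf n\}$. Their derivatives are
\begin{equ}
a'=\frac{|P_{\mathbf n}\mathbf w|^2}{|\mathbf u|},\qquad (|\mathbf u|^{-2})'=-\frac{2a}{|\mathbf u|^3},
\end{equ}
together with the formulas for $\mathbf n'$ and $(P_{\mathbf n}\mathbf w)'$ above. Each derivative gains exactly one factor $|\mathbf w|/|\mathbf u|$ with an $O(1)$ coefficient. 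Bounding every inner product by $|\mathbf w|$ gives $|g'''|\le C|\mathbf w|^3/r^3$, and the weight $\tfrac12\int_0^1(1-t)^2\,dt=\tfrac16$ then produces the stated bound provided $C\le 36$.

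\textbf{Main obstacle.} The only real difficulty is the bookkeeping in $g'''$ needed to confirm the numerical constant. To avoid a loose count, I would exploit the orthogonality $\mathbf n\perp P_{\mathbf n}\mathbf w$. Decomposing $\mathbf w=a\mathbf n+\mathbf p$ with $\mathbf p:=P_{\mathbf n}\mathbf w$, one writes $g'''$ explicitly as
\begin{equ}
g'''=\frac{1}{|\mathbf u|^3}\bigl(\alpha\,\mathbf p+\gamma\,\mathbf n\bigr),
\end{equ}
where $\alpha,\gamma$ are cubic polynomials in $a$ and $|\mathbf p|$. One then bounds the norm via $|\alpha\mathbf p+\gamma\mathbf n|^2=\alpha^2|\mathbf p|^2+\gamma^2$ together with $a^2+|\mathbf p|^2=|\mathbf w|^2$. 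This gives a constant well below 36, so the stated (non-sharp) constant $6$ should follow.
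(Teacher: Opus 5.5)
Your proposal is correct and takes essentially the same route as the paper: a Taylor expansion along the segment $t\mapsto\mathbf v+t\mathbf w$, with the second and third derivatives computed explicitly and bounded by powers of $|\mathbf w|/|\mathbf u|$. Your $g^{(k)}(t)$ are exactly the paper's ambient derivatives $D^k$ of $\mathbf u\mapsto\mathbf u/|\mathbf u|$ at $\mathbf v+t\mathbf w$ in the direction $\mathbf w$. The differences are only in bookkeeping: you use the integral remainder (the rigorous form, for vector-valued maps, of the paper's ``Lagrange'' bound) and the splitting $\mathbf w=a\mathbf n+\mathbf p$. That splitting indeed gives $g'''=|\mathbf u|^{-3}\bigl((6a^2-3|\mathbf p|^2)\mathbf p+6a|\mathbf p|^2\mathbf n\bigr)$, hence $|g'''|\le 3|\mathbf w|^3/|\mathbf u|^3$, well below the paper's term-by-term constant $36$.
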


\begin{proof}
We expand $\LN(\mathbf{u})$ around the unit vector $\mathbf{v}$ ($|\mathbf{v}|=1$).

\paragraph{First Order Expansion.} 
The differential of the reciprocal norm is $D(|\mathbf{u}|^{-1})[\mathbf{w}] = -|\mathbf{u}|^{-3}\langle \mathbf{u}, \mathbf{w} \rangle$. By the product rule:
\begin{equ}
    D\LN(\mathbf{u})[\mathbf{w}] = D(|\mathbf{u}|^{-1})[\mathbf{w}] \mathbf{u} + |\mathbf{u}|^{-1} D(\mathbf{u})[\mathbf{w}] = -\frac{\langle \mathbf{u}, \mathbf{w} \rangle}{|\mathbf{u}|^3} \mathbf{u} + \frac{1}{|\mathbf{u}|} \mathbf{w}.
\end{equ}
Evaluating at $\mathbf{u}=\mathbf{v}$ yields the first-order term: $D\LN(\mathbf{v})[\mathbf{w}] = \mathbf{w} - \langle \mathbf{v}, \mathbf{w} \rangle \mathbf{v} = P_{\mathbf{v}}\mathbf{w}$.

To bound $\mathbf{R}_1$, we compute the second derivative $D^2\LN(\mathbf{u})[\mathbf{w}, \mathbf{w}]$:
\begin{align*}
    D^2\LN(\mathbf{u})[\mathbf{w}, \mathbf{w}] &= D\left( -|\mathbf{u}|^{-3}\langle \mathbf{u}, \mathbf{w} \rangle \mathbf{u} + |\mathbf{u}|^{-1}\mathbf{w} \right)[\mathbf{w}] \\
    &= \frac{3\langle \mathbf{u}, \mathbf{w} \rangle^2 \mathbf{u}}{|\mathbf{u}|^5} - \frac{|\mathbf{w}|^2 \mathbf{u} + 2\langle \mathbf{u}, \mathbf{w} \rangle \mathbf{w}}{|\mathbf{u}|^3}.
\end{align*}
By Cauchy-Schwarz, $|D^2\LN(\mathbf{u})[\mathbf{w}, \mathbf{w}]| \le 6|\mathbf{w}|^2/|\mathbf{u}|^2$. Applying Taylor's theorem with the Lagrange remainder form $|\mathbf{R}_1| \le \frac{1}{2} \sup_{t\in[0,1]} |D^2\LN(\mathbf{v}+t\mathbf{w})|$ gives the first bound.

\paragraph{Second Order Expansion.} 
Evaluating the second derivative at $\mathbf{u}=\mathbf{v}$:
\begin{equ} D^2\LN(\mathbf{v})[\mathbf{w}, \mathbf{w}] = 3\langle \mathbf{v}, \mathbf{w} \rangle^2 \mathbf{v} - |\mathbf{w}|^2 \mathbf{v} - 2\langle \mathbf{v}, \mathbf{w} \rangle \mathbf{w}. \end{equ}
Using the decomposition $\mathbf{w} = P_{\mathbf{v}}\mathbf{w} + \langle \mathbf{v}, \mathbf{w} \rangle \mathbf{v}$ and $|\mathbf{w}|^2 = |P_{\mathbf{v}}\mathbf{w}|^2 + \langle \mathbf{v}, \mathbf{w} \rangle^2$, the second-order term $\mathbf{T}_2 = \frac{1}{2}D^2\LN(\mathbf{v})$ simplifies to:
\begin{equ} \mathbf{T}_2 = - \langle \mathbf{v}, \mathbf{w} \rangle P_{\mathbf{v}}\mathbf{w} - \frac{1}{2}|P_{\mathbf{v}}\mathbf{w}|^2 \mathbf{v}. \end{equ}

For $\mathbf{R}_2$, we differentiate $D^2\LN(\mathbf{u})$ to find the third derivative:
\[
D^3\LN(\mathbf u)[\mathbf w,\mathbf w,\mathbf w]
=
-15
\frac{\langle \mathbf u,\mathbf w\rangle^3}{|\mathbf u|^7}\mathbf u
+
9
\frac{\langle \mathbf u,\mathbf w\rangle|\mathbf w|^2}{|\mathbf u|^5}\mathbf u
+
9
\frac{\langle \mathbf u,\mathbf w\rangle^2}{|\mathbf u|^5}\mathbf w
-
3
\frac{|\mathbf w|^2}{|\mathbf u|^3}\mathbf w .
\]
Grouping terms by powers of $|\mathbf{u}|$ yields:
\begin{enumerate}
    \item Terms of $O(|\mathbf{u}|^{-3})$ contribute $3|\mathbf{w}|^3$.
    \item Terms of $O(|\mathbf{u}|^{-5})$ contribute $18|\mathbf{w}|^3$.
    \item Terms of $O(|\mathbf{u}|^{-7})$ contribute $15|\mathbf{w}|^3$.
\end{enumerate}
Summing these gives $|D^3 \LN(\mathbf{u})| \le 36|\mathbf{w}|^3/|\mathbf{u}|^3$. The Lagrange remainder $|\mathbf{R}_2| \le \frac{1}{3!} \sup |D^3 \LN|$ yields the constant $36/6 = 6$, completing the proof.
\end{proof}

We now proceed to provide the proof of Lemma~\ref{lem:norm_res}:

\begin{proof}[Proof of Lemma~\ref{lem:norm_res}]
First, we expand the intermediate step $Y^i_{\ell}$. Using the first-order Taylor expansion of the layer normalization map $\LN(v) = v/|v|$, we have:
\begin{equ}
\label{eq:Y_X_E_ATT}
Y^i_{\ell} = X^i_\ell + \dt \, P_{X^i_\ell}\attN{\ell}{X} + \mathcal{E}_{\text{att}},
\end{equ}
where the error term $\mathcal{E}_{\text{att}}$ corresponds to the generic remainder $\mathbf{R}_1$ from Lemma \ref{lem:taylor_norm} and can therefore be controlled, assuming $\dt \leq (2\mathsf c)^{-1}$, as:
\begin{equ}
|\mathcal{E}_{\mathrm{att}}| \leq \frac{3\dt^2 |\attN{\ell}{X}|^2}{\left(\min_{t\in[0,1]}|X^i_\ell +t \dt \attN{\ell}{X}|\right)^2 } \leq 12\mathsf c\dt^2.
\end{equ}
Next, using the defined noise increment $\mathbf{w} = \sqrt{\dt}G_{\ell+1}^m (Y^i_{\ell})$, the second transformer update is $X^i_{\ell+1} = \LN(Y^i_{\ell} + \mathbf{w})$. Expanding this around $Y^i_{\ell}$ via Lemma \ref{lem:taylor_norm} yields:
\begin{equ}
\label{eq:X_step_expansion}
X^i_{\ell+1} = Y^i_{\ell} + P_{Y^i_{\ell}} \mathbf{w} - \frac{1}{2}| P_{Y^i_{\ell}} \mathbf{w} |^2 Y^i_{\ell} - ((Y^i_{\ell})^\top \mathbf{w}) P_{Y^i_{\ell}} \mathbf{w} + \mathcal{E}_{\text{mlp}},
\end{equ}
where the MLP error is identically bounded by Lemma \ref{lem:split_mlp_remainder} as:
\begin{equ}
|\mathcal{E}_{\mathrm{mlp}}| \leq \mathsf c |\mathbf{w}|^3 
\end{equ}

We now substitute the expression for $Y^i_{\ell}$ into \eqref{eq:X_step_expansion} to express all projections and vector fields in terms of the base point $X^i_\ell$. From \eqref{eq:Y_X_E_ATT}, we know $|Y^i_{\ell} - X^i_{\ell}|\leq \mathsf c \dt$. Furthermore, because $Y^i_{\ell}$ is the output of the first layer normalization, it lies exactly on the unit sphere ($|Y^i_{\ell}| = 1$). Combining the expansions results in:
\begin{equ}
X^i_{\ell+1} = X^i_{\ell} + \dt P_{X^{i}_\ell}\attN{\ell}{X} + P_{X^i_{\ell}} \mathbf{w} - \frac{1}{2}| P_{X^i_{\ell}} \mathbf{w} |^2 X^i_{\ell} - ((X^i_{\ell})^\top \mathbf{w}) P_{X^i_{\ell}} \mathbf{w} + \mathcal{E}_{\text{total}},
\end{equ}
where the total error satisfies $|\mathcal{E}_{\text{total}}| \leq |\mathcal{E}_{\text{att}}| + |\mathcal{E}_{\text{mlp}}| + |\mathcal{E}_{\text{base}}|$. The base-change error $\mathcal{E}_{\text{base}}$ is defined as:
\begin{align*}
    \mathcal{E}_{\text{base}} &:= \left(P_{Y^i_{\ell}}\mathbf{w} -P_{X^i_{\ell}} \mathbf{w}\right) 
    - \frac{1}{2} \left( | P_{Y^i_{\ell}} \mathbf{w} |^2 Y^i_{\ell} - | P_{X^i_{\ell}} \mathbf{w} |^2 X^i_{\ell} \right) \\
    &\quad - \left( ((Y^i_{\ell})^\top \mathbf{w}) P_{Y^i_{\ell}}\mathbf{w} - ((X^i_{\ell})^\top \mathbf{w}) P_{X^i_{\ell}}\mathbf{w} \right).
\end{align*}
We bound each term in $\mathcal{E}_{\text{base}}$ using standard Lipschitz properties:
\begin{enumerate}
    \item Using the projection bound $|P_u - P_v| \leq 2|u-v|$,
    \begin{equ}
    |P_{Y^i_{\ell}}\mathbf{w} - P_{X^i_{\ell}} \mathbf{w}| \leq 2|Y^i_{\ell} - X^i_{\ell}| |\mathbf{w}| \leq 2 |\mathbf{w}| \mathsf c \dt.
    \end{equ}
    
    \item Let $A(u) = \frac{1}{2}| P_u \mathbf{w} |^2$. Using the decomposition $|A(Y)Y - A(X)X| \leq |A(Y)-A(X)||Y| + |A(X)||Y-X|$ and recalling $|Y^i_{\ell}|=1$:
    \begin{equ}
    \left| \frac{1}{2}| P_{Y^i_{\ell}} \mathbf{w} |^2 - \frac{1}{2}| P_{X^i_{\ell}} \mathbf{w} |^2 \right| 
    = \frac{1}{2} \left|((X^i_{\ell})^\top \mathbf{w})^2 - ((Y^i_{\ell})^\top \mathbf{w})^2\right| 
    \leq |\mathbf{w}|^2 \mathsf c \dt.
    \end{equ}
    Since $|A(X)| \leq \frac{1}{2}|\mathbf{w}|^2$, the total bound for this term is:
    \begin{equ}
    |\mathbf{w}|^2 \mathsf c \dt + \frac{1}{2}|\mathbf{w}|^2 \mathsf c \dt = \frac{3}{2}|\mathbf{w}|^2 \mathsf c \dt.
    \end{equ}
    
    \item Using the product rule on $((u)^\top \mathbf{w}) P_{u}\mathbf{w}$:
    \begin{align*}
    &\left|((Y^i_{\ell})^\top \mathbf{w}) P_{Y^i_{\ell}}\mathbf{w} - ((X^i_{\ell})^\top \mathbf{w}) P_{X^i_{\ell}}\mathbf{w}\right| \\
    &\quad \leq |(Y^i_{\ell} - X^i_{\ell})^\top \mathbf{w}| |P_{Y^i_{\ell}}\mathbf{w}| + |(X^i_{\ell})^\top \mathbf{w}| |P_{Y^i_{\ell}}\mathbf{w} - P_{X^i_{\ell}}\mathbf{w}| \\
    &\quad \leq (\mathsf c \dt |\mathbf{w}|)(|\mathbf{w}|) + (|\mathbf{w}|)(2|\mathbf{w}| \mathsf c \dt) = 3|\mathbf{w}|^2 \mathsf c \dt.
    \end{align*}
\end{enumerate}
Summing these contributions yields $|\mathcal{E}_{\text{base}}| \leq 2|\mathbf{w}| \mathsf c \dt + \frac{9}{2}|\mathbf{w}|^2 \mathsf c \dt$.

Finally, the current expression still depends on $\mathbf{w} = \sqrt{\dt}G_{\ell+1}^m (Y^i_{\ell})$. We substitute this with the point-matched noise $\mathbf{w}' := \sqrt{\dt}G_{\ell+1}^m (X^i_{\ell})$. Let $\delta := \mathbf{w} - \mathbf{w}'$. The error $\mathcal{E}_{\text{noise}}$ resulting from this substitution is bounded by:
\begin{enumerate}
    \item Linear term: 
    \begin{equ}
        |P_{X^i_{\ell}}\mathbf{w} - P_{X^i_{\ell}}\mathbf{w}'| \leq |\delta|.
    \end{equ}
    \item First quadratic term: 
    \begin{equ}
    \left| \frac{1}{2}| P_{X^i_{\ell}} \mathbf{w} |^2 X^i_{\ell} - \frac{1}{2}| P_{X^i_{\ell}} \mathbf{w}' |^2 X^i_{\ell} \right| \leq |\mathbf{w}||\delta| + \frac{1}{2}|\delta|^2.
    \end{equ}
    \item Second quadratic term:
    \begin{equ}
    \left| ((X^i_{\ell})^\top \mathbf{w}) P_{X^i_{\ell}} \mathbf{w} - ((X^i_{\ell})^\top \mathbf{w}') P_{X^i_{\ell}} \mathbf{w}' \right| \leq 2|\mathbf{w}||\delta| + |\delta|^2.
    \end{equ}
\end{enumerate}
Summing these yields $|\mathcal{E}_{\text{noise}}| \leq (1 + 3|\mathbf{w}|)|\delta| + \frac{3}{2}|\delta|^2$. Collecting all terms ($\mathcal{R}_\ell^i = \mathcal{E}_{\text{att}} + \mathcal{E}_{\text{mlp}} + \mathcal{E}_{\text{base}} + \mathcal{E}_{\text{noise}}$) concludes the proof.
\end{proof}
\begin{lemma}
\label{lem:split_mlp_remainder}
    Let $\mathbf{w} := \sqrt{\dt} G_{\ell+1}^m(Y_{\ell}^i)$ and let $\mathcal{E}_{\mathrm{mlp}}$ be the corresponding MLP remainder from \eqref{eq:X_step_expansion}. Then
    \begin{equ}
        |\mathcal{E}_{\mathrm{mlp}}| \leq \mathsf c |\mathbf{w}|^3
    \end{equ}
    almost surely. 
\end{lemma}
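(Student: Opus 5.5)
The plan is to deduce the bound from the second-order expansion in Lemma~\ref{lem:taylor_norm}, with $\mathbf v = Y^i_\ell$ and the given $\mathbf w$. This choice of $\mathbf v$ is admissible because $Y^i_\ell$ is itself the output of a normalization, so $|Y^i_\ell|=1$. Comparing \eqref{eq:X_step_expansion} with the statement of Lemma~\ref{lem:taylor_norm}, we see that $\mathcal E_{\mathrm{mlp}}$ is exactly the remainder $\mathbf R_2$ of that lemma. Explicitly,
\begin{equ}
\mathcal E_{\mathrm{mlp}} = \LN(Y^i_\ell+\mathbf w) - Y^i_\ell - P_{Y^i_\ell}\mathbf w + \tfrac12 |P_{Y^i_\ell}\mathbf w|^2 Y^i_\ell + ((Y^i_\ell)^\top\mathbf w)P_{Y^i_\ell}\mathbf w .
\end{equ}
The event $Y^i_\ell+\mathbf w=0$ has probability zero, since $\mathbf w$ has a nondegenerate Gaussian law conditionally on the past. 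This is why the bound is claimed only almost surely. The difficulty is that the bound on $\mathbf R_2$ from Lemma~\ref{lem:taylor_norm} contains the factor $\min_{t\in[0,1]}|Y^i_\ell+t\mathbf w|^{-3}$, which is not controlled when $\mathbf w$ is large. I would therefore split into two cases according to the size of $|\mathbf w|$.

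\emph{Case $|\mathbf w|\le 1/2$.} By the triangle inequality, $|Y^i_\ell+t\mathbf w|\ge 1-t|\mathbf w|\ge 1/2$ for every $t\in[0,1]$. Lemma~\ref{lem:taylor_norm} then gives directly
\begin{equ}
|\mathcal E_{\mathrm{mlp}}|\le 6\cdot 8\,|\mathbf w|^3 = 48|\mathbf w|^3 .
\end{equ}

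\emph{Case $|\mathbf w|> 1/2$.} Here I would discard the Taylor structure and bound each term of $\mathcal E_{\mathrm{mlp}}$ separately. We have $|\LN(\cdot)|=1$ and $|Y^i_\ell|=1$. Since $P_{Y^i_\ell}$ is an orthogonal projection, $|P_{Y^i_\ell}\mathbf w|\le|\mathbf w|$. The two quadratic terms are bounded by $\tfrac12|\mathbf w|^2$ and $|\mathbf w|^2$ respectively. Altogether,
\begin{equ}
|\mathcal E_{\mathrm{mlp}}|\le 2+|\mathbf w|+\tfrac32|\mathbf w|^2 .
\end{equ}
Because $|\mathbf w|>1/2$, each of these terms is dominated by a multiple of $|\mathbf w|^3$: $1\le 8|\mathbf w|^3$, $|\mathbf w|\le 4|\mathbf w|^3$ and $|\mathbf w|^2\le 2|\mathbf w|^3$. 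This yields $|\mathcal E_{\mathrm{mlp}}|\le 23|\mathbf w|^3$.

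Combining the two cases, the statement holds with $\mathsf c=48$. The only real obstacle is the singularity of $\LN$ at the origin. The case split removes it: in the small-noise regime the denominator in Lemma~\ref{lem:taylor_norm} is uniformly bounded below, and in the large-noise regime trivial polynomial growth suffices. No moment or probabilistic input is needed, since the argument is purely pathwise.
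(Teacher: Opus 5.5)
Your proof is correct and follows the paper's argument: the paper also splits on the size of $|\mathbf w|$ (at $1/4$ rather than your $1/2$), applies the cubic remainder of Lemma~\ref{lem:taylor_norm} on the small-noise event, and on the complement uses the crude bound $2+|\mathbf w|+\tfrac32|\mathbf w|^2\le \mathsf c|\mathbf w|^3$. Your explicit constants ($48$ and $23$) check out; one small quibble is that $\mathbf w$ need not be conditionally nondegenerate, since it can vanish when the output-bias variance is zero, but in that case $Y^i_\ell+\mathbf w=Y^i_\ell\neq 0$, so your almost-sure caveat still holds.
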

\begin{proof}
    Let $A_\ell^i := \{|\mathbf{w}|\leq 1/4\}$. Since $|Y_{\ell}^i|=1$, on $A_\ell^i$ we have
    \begin{equ}
        \min_{t\in[0,1]}|Y_{\ell}^i+t\mathbf{w}|\geq 1-|\mathbf{w}|\geq \frac34,
    \end{equ}
    and therefore Lemma \ref{lem:taylor_norm} gives
    \begin{equ}
        |\mathcal{E}_{\mathrm{mlp}}|\mathbf 1_{A_\ell^i}\leq \mathsf c |\mathbf{w}|^3 \mathbf 1_{A_\ell^i}.
    \end{equ}
    On the complementary event $(A_\ell^i)^c$, we use the exact identity
    \begin{equ}
        \mathcal{E}_{\mathrm{mlp}}=X_{\ell+1}^i-Y_{\ell}^i-P_{Y_{\ell}^i}\mathbf{w}
        +\frac12|P_{Y_{\ell}^i}\mathbf{w}|^2Y_{\ell}^i
        +((Y_{\ell}^i)^\top \mathbf{w})P_{Y_{\ell}^i}\mathbf{w}.
    \end{equ}
    Since $|X_{\ell+1}^i|=|Y_{\ell}^i|=1$, $|P_{Y_{\ell}^i}\mathbf{w}|\leq |\mathbf{w}|$, and $|(Y_{\ell}^i)^\top \mathbf{w}|\leq |\mathbf{w}|$, this yields
    \begin{equ}
        |\mathcal{E}_{\mathrm{mlp}}| \leq 2+|\mathbf{w}|+\frac32|\mathbf{w}|^2.
    \end{equ}
    Since $|\mathbf{w}| \geq 1/4$ on $(A_\ell^i)^c$, the right-hand side is bounded by $\mathsf c |\mathbf{w}|^3$. Hence
    \begin{equ}
        |\mathcal{E}_{\mathrm{mlp}}|\mathbf 1_{(A_\ell^i)^c}\leq \mathsf c |\mathbf{w}|^3 \mathbf 1_{(A_\ell^i)^c}.
    \end{equ}
    Combining the two regions yields
    \begin{equ}
        |\mathcal{E}_{\mathrm{mlp}}| \leq \mathsf c |\mathbf{w}|^3
    \end{equ}
    almost surely. 
\end{proof}

\subsection{Lemmas for Section~\ref{sec:strong-single-step}}

We prove here the version of the Euler-Maruyama estimate used in Section~\ref{sec:strong-single-step}.

\begin{lemma}
\label{lem:increment_delta_appendix}
    Under the assumptions of Proposition~\ref{prop:euler_maruyama}, for any $0 \leq u \leq t \leq T$ and any particle $i \in \{1, \dots, N\}$,
    \begin{equ}
        \E\left[|X^i_t - X^i_u|^2\right] \leq 2 K_3 (t-u)(1 + t-u).
    \end{equ}
\end{lemma}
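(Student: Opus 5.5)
We have not seen Proposition \ref{prop:euler_maruyama} itself, so we assume it concerns an $N$-particle SDE of the form
\begin{equ}
dX^i_t = a(X^i_t,\mathbf X_t)\,dt + \sum_{k}\tilde\sigma_k(X^i_t)\,dB^k_t .
\end{equ}
We further assume $K_3$ is the constant bounding both the drift and the noise intensity, that is,
\begin{equ}
|a|^2\le K_3, \qquad \sum_k|\tilde\sigma_k|^2\le K_3,
\end{equ}
as provided in our setting by Lemma~\ref{lem:regularity_projected_coefficients}. Under these assumptions the plan is a direct one-line increment estimate.

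\textbf{Step 1: splitting the increment.} Write the increment of particle $i$ between times $u$ and $t$ in integral form:
\begin{equ}
X^i_t-X^i_u=\int_u^t a(X^i_s,\mathbf X_s)\,ds+\sum_{k}\int_u^t\tilde\sigma_k(X^i_s)\,dB^k_s .
\end{equ}
By $(x+y)^2\le 2x^2+2y^2$ we get
\begin{equ}
\E|X^i_t-X^i_u|^2\le 2\,\E\Big|\int_u^t a\,ds\Big|^2+2\,\E\Big|\sum_k\int_u^t\tilde\sigma_k\,dB^k_s\Big|^2 .
\end{equ}

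\textbf{Step 2: the drift term.} By Cauchy--Schwarz in time,
\begin{equ}
\Big|\int_u^t a\,ds\Big|^2\le (t-u)\int_u^t|a|^2\,ds\le K_3(t-u)^2 .
\end{equ}

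\textbf{Step 3: the noise term.} Use the It\^o isometry, applied to the independent $d$-dimensional Brownian motions $B^k$, so that all cross terms vanish:
\begin{equ}
\E\Big|\sum_k\int_u^t\tilde\sigma_k\,dB^k_s\Big|^2=\E\int_u^t\sum_k|\tilde\sigma_k(X^i_s)|^2\,ds\le K_3(t-u).
\end{equ}
If $\tilde\sigma_k$ is matrix-valued this uses the Frobenius norm; in our case $\tilde\sigma_k=\rproj{x}\sigma_k$ acts on $\R^d$, and any factor $d$ is absorbed into $K_3$.

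\textbf{Step 4: conclusion.} Summing the two contributions gives
\begin{equ}
\E|X^i_t-X^i_u|^2\le 2K_3\big((t-u)^2+(t-u)\big),
\end{equ}
which is the claim.

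\textbf{Main obstacle.} The one delicate point is justifying the isometry with infinitely many noises. This requires convergence in $L^2$ of the series $\sum_k\int\tilde\sigma_k\,dB^k$. We would handle it by applying the isometry to finite partial sums and passing to the limit, using the uniform bound $\sum_k|\tilde\sigma_k|^2\le K_3$ and monotone convergence. That same bound also ensures the stochastic integral is a square-integrable martingale.
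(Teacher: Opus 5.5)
Your proof is correct and matches the paper's argument step for step. The paper also writes the increment in integral form, splits it with $(x+y)^2\le 2x^2+2y^2$, applies Cauchy--Schwarz to the drift and the It\^o isometry to the noise, and uses \eqref{eq:em_growth} (where $|a|^2+\sum_k|\tilde\sigma_k|^2\le K_3$ bounds each piece separately) to get exactly $2K_3(t-u)(1+t-u)$. Two small remarks: reading $|\tilde\sigma_k|$ as the Frobenius norm, as the paper implicitly does, no factor $d$ needs to be absorbed into $K_3$; and your partial-sum justification of the isometry for infinitely many noises is extra rigor that the paper omits.
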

\begin{proof}
    We use the integral formulation of the true process $X^i$. By the algebraic inequality $(a+b)^2 \leq 2a^2 + 2b^2$, the squared increment is bounded by:
    \begin{equation*}
        |X^i_t - X^i_u|^2 \leq 2 \left| \int_u^t a(X^i_s, \mathbf{X}_s) \, \dd s \right|^2 + 2 \left| \sum_{k=1}^\infty \int_u^t \tilde \sigma_k(X^i_s) \, dB^k_s \right|^2.
    \end{equation*}
    Taking expectations, we apply the Cauchy-Schwarz inequality to the drift term and the It\^o isometry to the diffusion term:
    \begin{equation*}
        \E\left[|X^i_t - X^i_u|^2\right] \leq 2(t-u) \int_u^t \E\left[|a(X^i_s, \mathbf{X}_s)|^2\right] \dd s + 2 \int_u^t \E\left[\sum_{k=1}^\infty |\tilde \sigma_k(X^i_s)|^2\right] \dd s.
    \end{equation*}
    By the uniform growth assumption \eqref{eq:em_growth} from Proposition~\ref{prop:euler_maruyama}, we have $|a|^2 + \sum_{k=1}^\infty |\tilde \sigma_k|^2 \leq K_3$, which implies both the drift magnitude squared and the diffusion variance are individually bounded by $K_3$. Thus:
    \begin{align*}
        \E\left[|X^i_t - X^i_u|^2\right] &\leq 2(t-u) \int_u^t K_3 \, \dd s + 2 \int_u^t K_3 \, \dd s \\
        &\leq 2 K_3 (t-u)^2 + 2 K_3 (t-u) \\
        &= 2 K_3 (t-u)(1 + t-u).
    \end{align*}
\end{proof}

To avoid confusion with the Karhunen-Lo\`eve modes appearing in the main text, in the result below we denote the diffusion coefficients in the appendix proposition by $(\tilde \sigma_k)_k$.

\begin{proposition}
\label{prop:euler_maruyama}
    Let $T > 0$ and step size $\dt = T/L \leq 1$. Let the discrete time points be $t_\ell = \ell \dt$ for $\ell=0, \dots, L$.
    Consider a system of $N$ particles governed by the SDE
    \begin{equ}
        dX^i_t = a(X^i_t, \mathbf{X}_t)\,dt + \sum_{k=1}^\infty \tilde \sigma_k(X^i_t)\, dB^k_t, \qquad X^i_0 = x^i_0,
    \end{equ}
    and its continuous-time Euler-Maruyama interpolation
    \begin{equ}
        \hat X^i_t = x^i_0 + \int_0^t a(\hat X^i_{t_{\ell_s}},\hat{\mathbf X}_{t_{\ell_s}})\,\dd s + \sum_{k=1}^\infty \int_0^t \tilde \sigma_k(\hat X^i_{t_{\ell_s}})\,\dd B_s^k.
    \end{equ}
    Assume that there exist constants $K_2,K_3>0$ such that for all $N\in\mathbb N$ and $x,y\in(\R^d)^N$,
    \begin{align}
        \E[|X_0^i|^2] &< \infty, \\
        \frac1N\sum_{i=1}^N \left(|a(x^i,\mathbf x)-a(y^i,\mathbf y)|^2 + \sum_{k=1}^\infty |\tilde \sigma_k(x^i)-\tilde \sigma_k(y^i)|^2\right)
        &\le K_2 \frac1N\sum_{i=1}^N |x^i-y^i|^2, \label{eq:em_lipschitz} \\
        |a(x^i,\mathbf x)|^2 + \sum_{k=1}^\infty |\tilde \sigma_k(x^i)|^2 &\le K_3. \label{eq:em_growth}
    \end{align}
    Then
    \begin{equ}
    \E\left[\sup_{0\leq t\leq T}\frac{1}{N}\sum_{i=1}^N |X^i_t - \hat{X}^i_t|^2\right]
    \leq 16 K_2 K_3 (4T+T^2) e^{4 K_2 (4+T) T} \dt.
    \end{equ}
\end{proposition}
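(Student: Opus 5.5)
We prove Proposition~\ref{prop:euler_maruyama} with the classical strong Euler--Maruyama argument, keeping every estimate in the averaged form $\frac1N\sum_i$ so that only $K_2$ and $K_3$ enter and nothing depends on $N$.

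\textbf{Setup.} Set $e^i_t := X^i_t-\hat X^i_t$ and
\begin{equ}
\phi(t):=\E\Bigl[\sup_{0\le s\le t}\frac1N\sum_{i=1}^N|e^i_s|^2\Bigr].
\end{equ}
Subtracting the integral equations gives
\begin{equ}
e^i_t=\int_0^t\bigl(a(X^i_s,\mathbf X_s)-a(\hat X^i_{t_{\ell_s}},\hat{\mathbf X}_{t_{\ell_s}})\bigr)\dd s+\sum_k\int_0^t\bigl(\tilde\sigma_k(X^i_s)-\tilde\sigma_k(\hat X^i_{t_{\ell_s}})\bigr)\dd B^k_s .
\end{equ}
Use $(a+b)^2\le 2a^2+2b^2$ to split $|e^i_t|^2$. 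Take the supremum over $[0,t]$, average over $i$, and take expectations.

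\textbf{Drift term.} By Cauchy--Schwarz in time it is bounded by $2t\int_0^t\E\bigl[\frac1N\sum_i|a(\cdot)-a(\cdot)|^2\bigr]\dd s$.

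\textbf{Diffusion term.} Each stochastic integral is an $\R^d$-valued martingale. Doob's $L^2$ maximal inequality, applied particle by particle (the average of suprema is at most the sum, and $\frac1N\sum_i$ commutes with expectation), gives a bound $8\int_0^t\E\bigl[\frac1N\sum_i\sum_k|\tilde\sigma_k(X^i_s)-\tilde\sigma_k(\hat X^i_{t_{\ell_s}})|^2\bigr]\dd s$ via the It\^o isometry for countably many Brownian motions.

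\textbf{Using the Lipschitz bound.} Apply \eqref{eq:em_lipschitz} jointly to drift and diffusion, with $x=\mathbf X_s$ and $y=\hat{\mathbf X}_{t_{\ell_s}}$. Then insert the intermediate point $\mathbf X_{t_{\ell_s}}$:
\begin{equ}
|X^i_s-\hat X^i_{t_{\ell_s}}|^2\le 2|X^i_s-X^i_{t_{\ell_s}}|^2+2|e^i_{t_{\ell_s}}|^2 .
\end{equ}
The second piece is bounded by $\phi(s)$ after averaging. The first piece is handled by Lemma~\ref{lem:increment_delta_appendix}, which uses only \eqref{eq:em_growth} and so is uniform in $i$ and $N$: it gives $\E|X^i_s-X^i_{t_{\ell_s}}|^2\le 2K_3\dt(1+\dt)\le 4K_3\dt$, using $\dt\le1$.

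\textbf{Gr\"onwall.} Collecting constants gives
\begin{equ}
\phi(t)\le 4K_2(4+T)\int_0^t\bigl(\phi(s)+4K_3\dt\bigr)\dd s ,
\end{equ}
that is, $\phi(t)\le C_1\dt+4K_2(4+T)\int_0^t\phi(s)\,\dd s$ with $C_1$ of order $K_2K_3(4T+T^2)$. Gr\"onwall's lemma then yields the stated bound $16K_2K_3(4T+T^2)e^{4K_2(4+T)T}\dt$. The precise numerical constants are bookkeeping: the factor $16$ comes from $2\cdot2\cdot4$, and $4+T$ from combining the Doob constant with the Cauchy--Schwarz factor $t\le T$.

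\textbf{Finiteness of $\phi$.} Gr\"onwall requires $\phi$ to be finite, which follows from the global bound \eqref{eq:em_growth} together with $\E|X^i_0|^2<\infty$.

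\textbf{Main obstacle.} The one delicate point is to keep everything uniform in $N$. The interaction enters only through the averaged Lipschitz condition \eqref{eq:em_lipschitz}, which must be applied to the empirical average before taking expectations, never particle by particle. Likewise, Doob's inequality must be applied to each particle's martingale and then averaged, rather than to a supremum of $N$ martingales, since the latter would produce $N$- or $\log N$-dependent factors.
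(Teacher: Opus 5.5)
Your proposal is correct and follows essentially the same route as the paper: Cauchy--Schwarz for the drift, Doob's inequality plus the It\^o isometry applied particle by particle for the diffusion, the averaged Lipschitz bound, Lemma~\ref{lem:increment_delta_appendix} for the time increments of the true process, and Gr\"onwall. The only cosmetic difference is where the frozen true process $\mathbf X_{t_{\ell_u}}$ is inserted. The paper inserts it at the level of the coefficients, giving the four terms $A^1_t,A^2_t,B^1_t,B^2_t$, whereas you insert it in the argument after applying \eqref{eq:em_lipschitz}; both routes give exactly the same constants $4K_2(4+T)$ and $16K_2K_3(4T+T^2)$.
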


\begin{proof}[Proof of Proposition \ref{prop:euler_maruyama}]
    Let $Z(t) := \E\left[\sup_{0\leq s \leq t} \frac{1}{N}\sum_{i=1}^N |X^i_s - \hat{X}^i_s|^2\right]$.
    By subtracting the integral formulations of $X^i_t$ and $\hat{X}^i_t$, we have:
    \begin{align*}
    X^i_s - \hat{X}^i_s &= \int_0^s \left(a(X^i_u, \mathbf{X}_u) - a(\hat{X}^i_{t_{\ell_u}}, \hat{\mathbf{X}}_{t_{\ell_u}})\right) \dd u \\
    &\quad + \sum_{k=1}^\infty \int_0^s \left(\tilde \sigma_k(X^i_u) - \tilde \sigma_k(\hat{X}^i_{t_{\ell_u}})\right) \dd B^k_u.
    \end{align*}
    We decompose the error by introducing the coefficients evaluated at the \textit{true} process frozen at the grid point $t_{\ell_u}$. Using the algebraic inequality $(a+b+c+d)^2 \leq 4(a^2+b^2+c^2+d^2)$, we bound the mean squared error by four distinct terms:
    \begin{equation*}
        Z(t) \leq 4 \left( A^1_t + A^2_t + B^1_t + B^2_t \right). 
    \end{equation*}

    \textbf{1. Bounding $A^1_t$ (Spatial discretization error):}
    \begin{equation*}
        A^1_t := \E\left[\sup_{0\leq s \leq t} \frac{1}{N}\sum_{i=1}^N \left| \int_0^s \left( a(X^i_{t_{\ell_u}}, \mathbf{X}_{t_{\ell_u}}) - a(\hat{X}^i_{t_{\ell_u}}, \hat{\mathbf{X}}_{t_{\ell_u}}) \right) \dd u \right|^2 \right].
    \end{equation*}
    Using the Cauchy-Schwarz inequality ($\left|\int_0^s f \, \dd u\right|^2 \leq s \int_0^s |f|^2 \, \dd u \leq T \int_0^t |f|^2 \, \dd u$) and the global Lipschitz assumption \eqref{eq:em_lipschitz}:
    \begin{align*}
        A^1_t &\leq T \int_0^t \frac{1}{N}\sum_{i=1}^N \E\left[ \left| a(X^i_{t_{\ell_u}}, \mathbf{X}_{t_{\ell_u}}) - a(\hat{X}^i_{t_{\ell_u}}, \hat{\mathbf{X}}_{t_{\ell_u}}) \right|^2 \right] \dd u \\
        &\leq T K_2 \int_0^t \E\left[ \frac{1}{N}\sum_{i=1}^N |X^i_{t_{\ell_u}} - \hat{X}^i_{t_{\ell_u}}|^2 \right] \dd u.
    \end{align*}
    Since $|X^i_{t_{\ell_u}} - \hat{X}^i_{t_{\ell_u}}|^2 \leq \sup_{r \leq u} |X^i_r - \hat{X}^i_r|^2$, we have $A^1_t \leq T K_2 \int_0^t Z(u) \, \dd u$.

    \textbf{2. Bounding $A^2_t$ (Time regularity of the drift):}
    \begin{equation*}
        A^2_t := \E\left[\sup_{0\leq s \leq t} \frac{1}{N}\sum_{i=1}^N \left| \int_0^s \left( a(X^i_u, \mathbf{X}_u) - a(X^i_{t_{\ell_u}}, \mathbf{X}_{t_{\ell_u}}) \right) \dd u \right|^2 \right].
    \end{equation*}
    Using Cauchy-Schwarz and the Lipschitz assumption \eqref{eq:em_lipschitz}:
    \begin{equation*}
        A^2_t \leq T K_2 \int_0^t \frac{1}{N}\sum_{i=1}^N \E\left[ |X^i_u - X^i_{t_{\ell_u}}|^2 \right] \dd u.
    \end{equation*}
    By Lemma~\ref{lem:increment_delta_appendix} and the assumption $\dt \le 1$, $\E[ |X^i_u - X^i_{t_{\ell_u}}|^2 ] \leq 2K_3 \dt(1+\dt) \leq 4 K_3 \dt$. Thus:
    \begin{equation*}
        A^2_t \leq T K_2 \int_0^t 4 K_3 \dt \, \dd u = 4 T^2 K_2 K_3 \dt.
    \end{equation*}

    \textbf{3. Bounding $B^1_t$ (Martingale spatial error):}
    \begin{equation*}
        B^1_t := \E\left[\sup_{0\leq s \leq t} \frac{1}{N}\sum_{i=1}^N \left| \sum_{k=1}^\infty \int_0^s \left( \tilde \sigma_k(X^i_{t_{\ell_u}}) - \tilde \sigma_k(\hat{X}^i_{t_{\ell_u}}) \right) \dd B^k_u \right|^2 \right].
    \end{equation*}
    Using Doob's maximal inequality and the It\^o isometry:
    \begin{align*}
        B^1_t &\leq 4 \frac{1}{N}\sum_{i=1}^N \E\left[ \int_0^t \sum_{k=1}^\infty \left| \tilde \sigma_k(X^i_{t_{\ell_u}}) - \tilde \sigma_k(\hat{X}^i_{t_{\ell_u}}) \right|^2 \dd u \right] \\
        &\leq 4 K_2 \int_0^t \E\left[ \frac{1}{N}\sum_{i=1}^N |X^i_{t_{\ell_u}} - \hat{X}^i_{t_{\ell_u}}|^2 \right] \dd u \leq 4 K_2 \int_0^t Z(u) \, \dd u.
    \end{align*}

    \textbf{4. Bounding $B^2_t$ (Time regularity of the diffusion):}
    \begin{equation*}
        B^2_t := \E\left[\sup_{0\leq s \leq t} \frac{1}{N}\sum_{i=1}^N \left| \sum_{k=1}^\infty \int_0^s \left( \tilde \sigma_k(X^i_u) - \tilde \sigma_k(X^i_{t_{\ell_u}}) \right) \dd B^k_u \right|^2 \right].
    \end{equation*}
    Again using Doob's inequality, the It\^o isometry, and the Lipschitz condition \eqref{eq:em_lipschitz}:
    \begin{equation*}
        B^2_t \leq 4 K_2 \int_0^t \frac{1}{N}\sum_{i=1}^N \E\left[ |X^i_u - X^i_{t_{\ell_u}}|^2 \right] \dd u.
    \end{equation*}
    Applying the bound from Lemma~\ref{lem:increment_delta_appendix} yields:
    \begin{equation*}
        B^2_t \leq 4 K_2 \int_0^t 4 K_3 \dt \, \dd u = 16 T K_2 K_3 \dt.
    \end{equation*}
    
    \textbf{Conclusion:}
    Substituting the four bounds back into the inequality for $Z(t)$:
    \begin{align*}
        Z(t) &\leq 4 \left( T K_2 \int_0^t Z(u) \, \dd u + 4 T^2 K_2 K_3 \dt + 4 K_2 \int_0^t Z(u) \, \dd u + 16 T K_2 K_3 \dt \right) \\
        &= 16 K_2 K_3 (4T + T^2) \dt + 4 K_2(T + 4) \int_0^t Z(u) \, \dd u.
    \end{align*}
    Applying the continuous Gr\"onwall inequality $u(t) \le \alpha + \beta \int_0^t u(s) \dd s \implies u(t) \le \alpha e^{\beta t}$, we obtain:
    \begin{equation*}
        Z(T) \leq 16 K_2 K_3 (4T + T^2) \dt \, e^{4 K_2(4+T) T},
    \end{equation*}
    which concludes the proof.
\end{proof}

\section{Numerical Experiments}
\label{sec:experiments}

In this section, we numerically simulate the discrete transformer dynamics \eqref{eq:transformer-update} to verify the synchronization by noise phenomena established in Section \ref{sec:intro-sync-by-noise}. 

We track $N=64$ tokens through $L=150$ layers up to a terminal time $T=5.0$, with an inverse temperature $\beta=1.0$. The tokens $(X_0^i)_{i=1}^N$ are initialized \iid\ from the uniform distribution on $\mathbb{S}^{d-1}$. Figures \ref{fig:relu}, \ref{fig:silu}, \ref{fig:sigmoid}, \ref{fig:tanh}, \ref{fig:no_mlp} show the results of the experiments with different choices of feature dimensions $d \in \{5, 64, 128\}$, MLP bias standard deviations $\sigma \in \{0.0, 0.1\}$ (where we assume $\varbU=\varbW=\sigma$), and common activation functions (ReLU, SiLU, Sigmoid, Tanh). We scale the self-attention update by a factor of $1$ (attractive), $0$ (pure MLP noise), and $-1$ (repulsive). To monitor clustering, we track the pairwise cosine similarity $\langle X_\ell^i, X_\ell^j\rangle$ between tokens and the interaction energy $\mathcal{E}_\beta(\mathbf{X}_\ell)$ defined in \eqref{e:energy}.

As discussed in Section \ref{sec:intro-sync-by-noise}, synchronization by noise is broadly governed by the top Lyapunov exponent of the stochastic flow, $\lambda_1 = ((d-3)/2)\kiso'(1) - ((d-1)/2)\kiso(1)$. A strictly negative $\lambda_1$ ensures asymptotic synchronization. Furthermore, the exponential $L^1$-dissipation bound of Theorem \ref{t:sync} requires the stricter condition $\bar{\lambda} = (d-1)\kiso'(1) - (d-3)\kiso(1) < 0$ (Assumption \ref{ass:sync_dissipation}). Table \ref{tab:lyapunov} summarizes the numerically computed values for these quantities across our experimental grid. In particular it confirms that $\lambda_1 \le 0$ across all tested configurations, indicating that noise-induced synchronization is a generic property of these standard initializations
\begin{table}[ht]
    \centering
    \label{tab:lyapunov}
    \small
    \begin{tabular}{lrrrrrr}
        \toprule
        \textbf{Activation} & $d$ & \textbf{Bias ($\sigma$)} & $\kiso(1)$ & $\kiso'(1)$ & $\lambda_1$ & $\bar{\lambda}$ \\
        \midrule
        ReLU & 5 & 0.0 & 1.00e-01 & 1.00e-01 & -1.00e-01 & 2.00e-01 \\
        ReLU & 5 & 0.1 & 1.15e-01 & 1.00e-01 & -1.30e-01 & 1.70e-01 \\
        ReLU & 64 & 0.0 & 7.81e-03 & 7.81e-03 & -7.81e-03 & 1.56e-02 \\
        ReLU & 64 & 0.1 & 2.28e-02 & 7.81e-03 & -4.80e-01 & -8.99e-01 \\
        ReLU & 128 & 0.0 & 3.91e-03 & 3.91e-03 & -3.91e-03 & 7.81e-03 \\
        ReLU & 128 & 0.1 & 1.89e-02 & 3.91e-03 & -9.56e-01 & -1.87e+00 \\
        \midrule
        Sigmoid & 5 & 0.0 & 2.61e-01 & 1.14e-02 & -5.11e-01 & -4.77e-01 \\
        Sigmoid & 5 & 0.1 & 2.72e-01 & 1.14e-02 & -5.32e-01 & -4.98e-01 \\
        Sigmoid & 64 & 0.0 & 2.51e-01 & 9.69e-04 & -7.88e+00 & -1.52e+01 \\
        Sigmoid & 64 & 0.1 & 2.62e-01 & 9.64e-04 & -8.21e+00 & -1.59e+01 \\
        Sigmoid & 128 & 0.0 & 2.50e-01 & 4.86e-04 & -1.59e+01 & -3.12e+01 \\
        Sigmoid & 128 & 0.1 & 2.61e-01 & 4.84e-04 & -1.65e+01 & -3.26e+01 \\
        \midrule
        Tanh & 5 & 0.0 & 1.47e-01 & 1.51e-01 & -1.44e-01 & 3.09e-01 \\
        Tanh & 5 & 0.1 & 1.63e-01 & 1.49e-01 & -1.76e-01 & 2.72e-01 \\
        Tanh & 64 & 0.0 & 1.52e-02 & 1.52e-02 & -1.50e-02 & 3.06e-02 \\
        Tanh & 64 & 0.1 & 3.44e-02 & 1.49e-02 & -6.29e-01 & -1.16e+00 \\
        Tanh & 128 & 0.0 & 7.69e-03 & 7.69e-03 & -7.66e-03 & 1.55e-02 \\
        Tanh & 128 & 0.1 & 2.72e-02 & 7.55e-03 & -1.26e+00 & -2.44e+00 \\
        \midrule
        SiLU & 5 & 0.0 & 5.65e-02 & 5.84e-02 & -5.45e-02 & 1.21e-01 \\
        SiLU & 5 & 0.1 & 6.96e-02 & 5.87e-02 & -8.04e-02 & 9.57e-02 \\
        SiLU & 64 & 0.0 & 3.95e-03 & 3.97e-03 & -3.50e-03 & 8.84e-03 \\
        SiLU & 64 & 0.1 & 1.65e-02 & 4.00e-03 & -3.98e-01 & -7.56e-01 \\
        SiLU & 128 & 0.0 & 1.96e-03 & 1.97e-03 & -1.73e-03 & 4.41e-03 \\
        SiLU & 128 & 0.1 & 1.45e-02 & 1.99e-03 & -7.97e-01 & -1.56e+00 \\
        \bottomrule
    \end{tabular}
    \caption{Computed kernel values, top Lyapunov exponent ($\lambda_1$), and stability parameter ($\bar{\lambda}$) for various activation functions, dimensions $d$, and bias standard deviations $\sigma$.}
\end{table}

We observe several key phenomena:
\begin{enumerate}
    \item \textbf{Bias accelerates collapse:} Introducing a non-zero bias ($\sigma=0.1$) strictly increases $\kiso(1)$, which strongly penalizes both $\lambda_1$ and $\bar{\lambda}$. For activations like ReLU, Tanh, and SiLU, the bias is necessary to push $\bar{\lambda}$ into the strictly negative regime required by Assumption \ref{ass:sync_dissipation}, triggering the exponential energy dissipation guaranteed by Theorem \ref{t:sync}.
    \item \textbf{Sigmoid is highly unstable:} Because the Sigmoid activation does not pass through the origin ($\act(0) \neq 0$), it acts as an intrinsic, permanent bias. This yields heavily negative $\lambda_1$ and $\bar{\lambda}$ values regardless of explicit parameter bias, leading to rapid token collapse and poor preservation of token diversity.
    \item \textbf{Proximity to the edge of synchronization:} The most widely used activation functions in modern Large Language Models (ReLU, SiLU) exhibit $\lambda_1$ values that are negative but remarkably close to zero when initialized without bias (e.g., $\lambda_1 \approx 0.00$ for SiLU at $d=128$). This structural proximity to the edge of the synchronization regime suggests that avoiding immediate token collapse is closely tied to the practical trainability of deep transformer models.
    \item \textbf{Role of dimension:} Increasing the feature dimension $d$ exhibits a dual effect depending on the bias. Without bias, larger dimensions push $\lambda_1$ closer to zero, effectively slowing down synchronization. Conversely, in the presence of bias, the variance terms scale differently with $d$, causing high-dimensional models to collapse significantly faster.
     \item \textbf{Role of attention}: Attractive attention cooperates with the MLP noise to drive rapid synchronization. Repulsive attention actively resists this collapse. However, consistent with Theorem \ref{t:sync}, if the stochastic noise is sufficiently coercive (e.g., via biased MLPs), the common noise eventually overpowers the repulsive drift, dictating the long-time behavior and forcing clustering.
\end{enumerate}

Finally, to visualize the contractive effect of the MLP noise in isolation, we simulate $N=64$ tokens on $\mathbb{S}^3$ ($d=4$) over $L=2000$ layers ($T=20.0$) with the attention component disabled. Using wide MLPs ($m=2048$) with ReLU and Tanh activations, Figure \ref{fig:sphere_clustering} shows the initially uniform tokens aggregating into clusters, providing a visual verification of the noise-induced synchronization predicted by Theorem \ref{t:sync}.

\begin{figure}[hbt!]
    \centering
\includegraphics[width=0.95\textwidth]{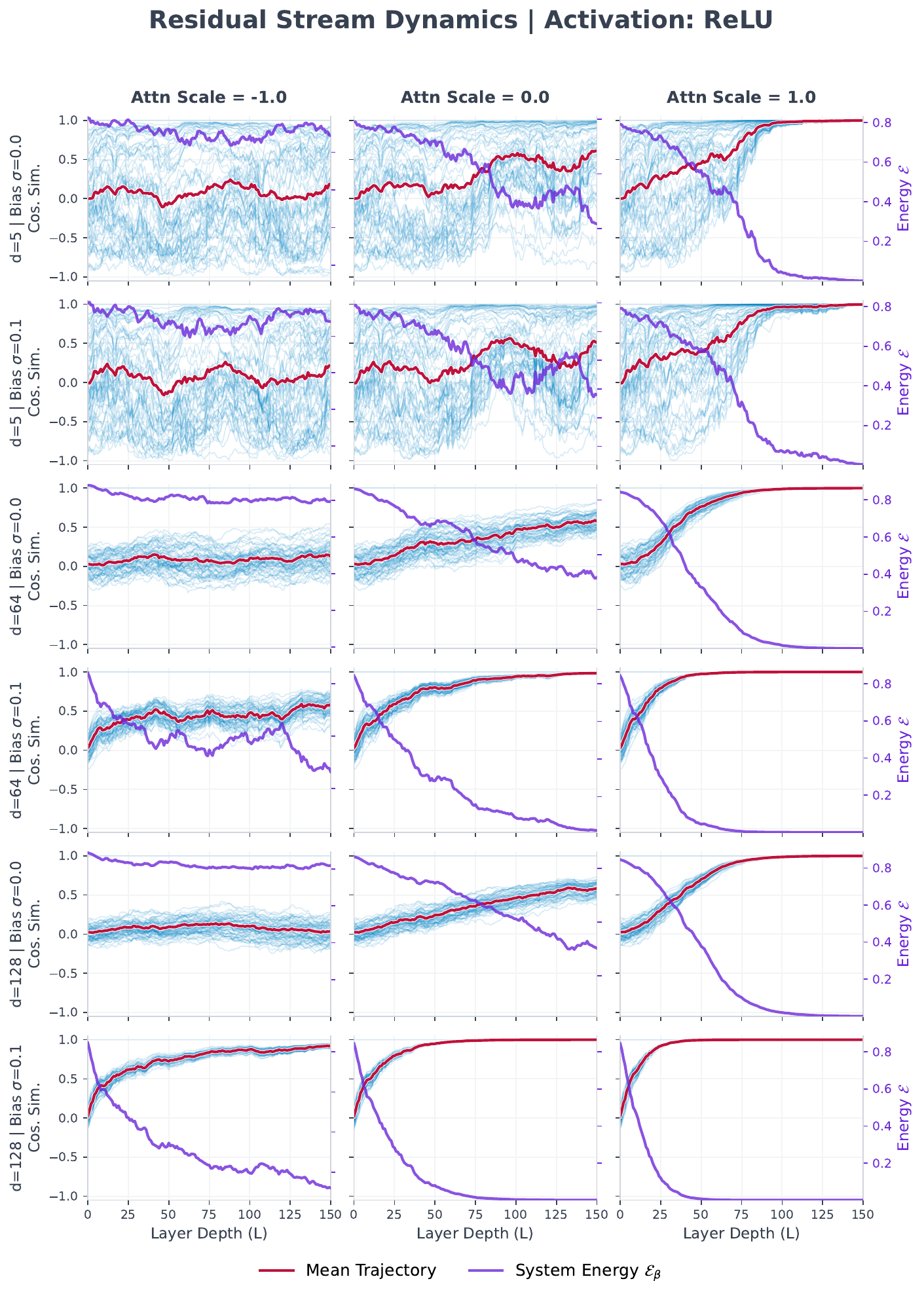}
    \caption{Numerical simulations of the discrete transformer dynamics with ReLU activation, for $N=64$ tokens over $L=150$ layers ($T=5.0$), dimensions $d\in\{5,64,128\}$, bias levels $\sigma\in\{0.0,0.1\}$, and attention scaling $1$, $0$, and $-1$. The panels show the pairwise cosine similarities and the interaction energy $\mathcal{E}_\beta(\mathbf X_\ell)$.}
    \label{fig:relu}
\end{figure}

\newpage
\begin{figure}[hbt!]
    \centering
\includegraphics[width=0.95\textwidth]{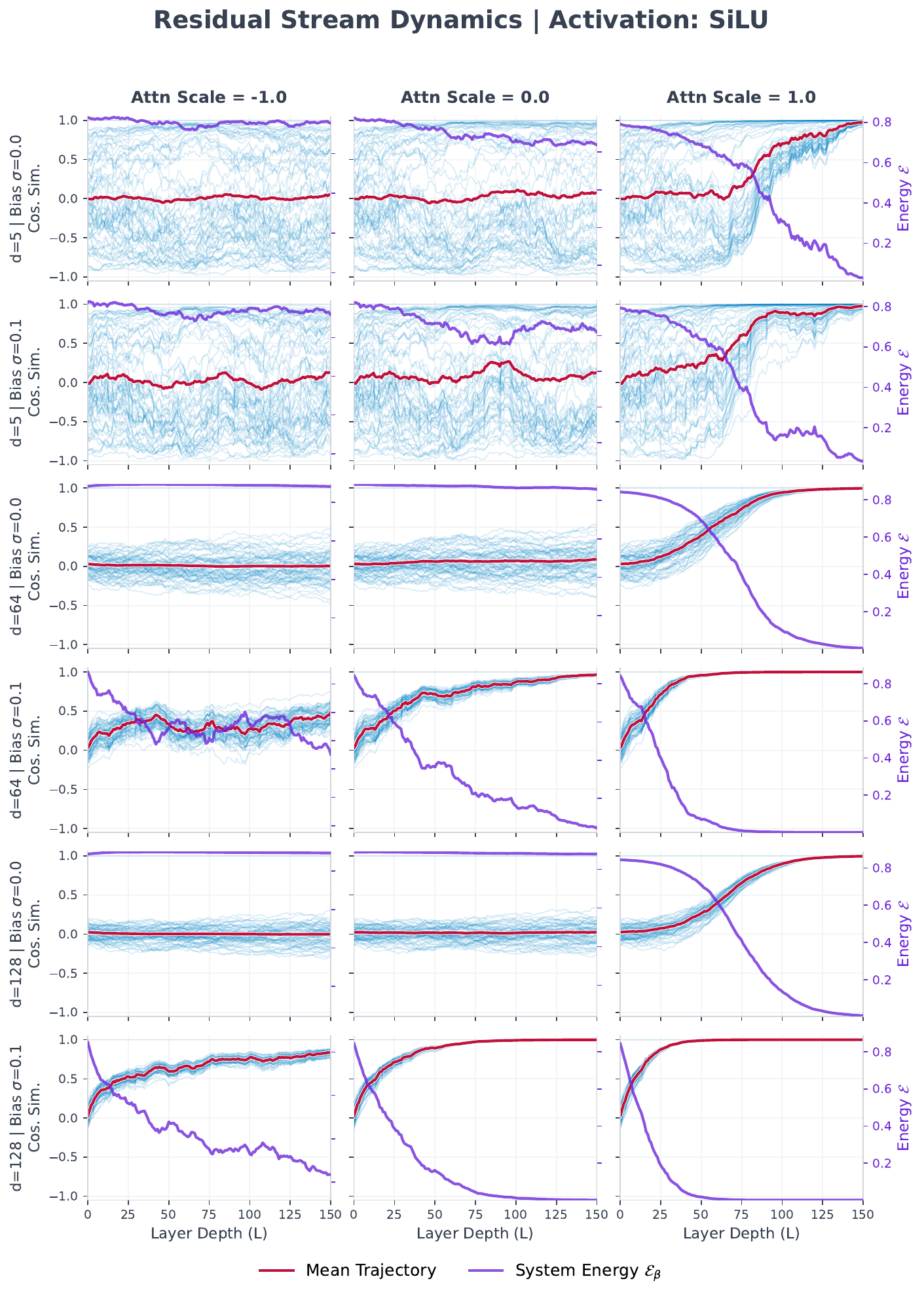}
    \caption{Numerical simulations of the discrete transformer dynamics with SiLU activation, for $N=64$ tokens over $L=150$ layers ($T=5.0$), dimensions $d\in\{5,64,128\}$, bias levels $\sigma\in\{0.0,0.1\}$, and attention scaling $1$, $0$, and $-1$. The panels show the pairwise cosine similarities and the interaction energy $\mathcal{E}_\beta(\mathbf X_\ell)$.}
    \label{fig:silu}
\end{figure}

\newpage
\begin{figure}[hbt!]
    \centering
\includegraphics[width=0.9\textwidth]{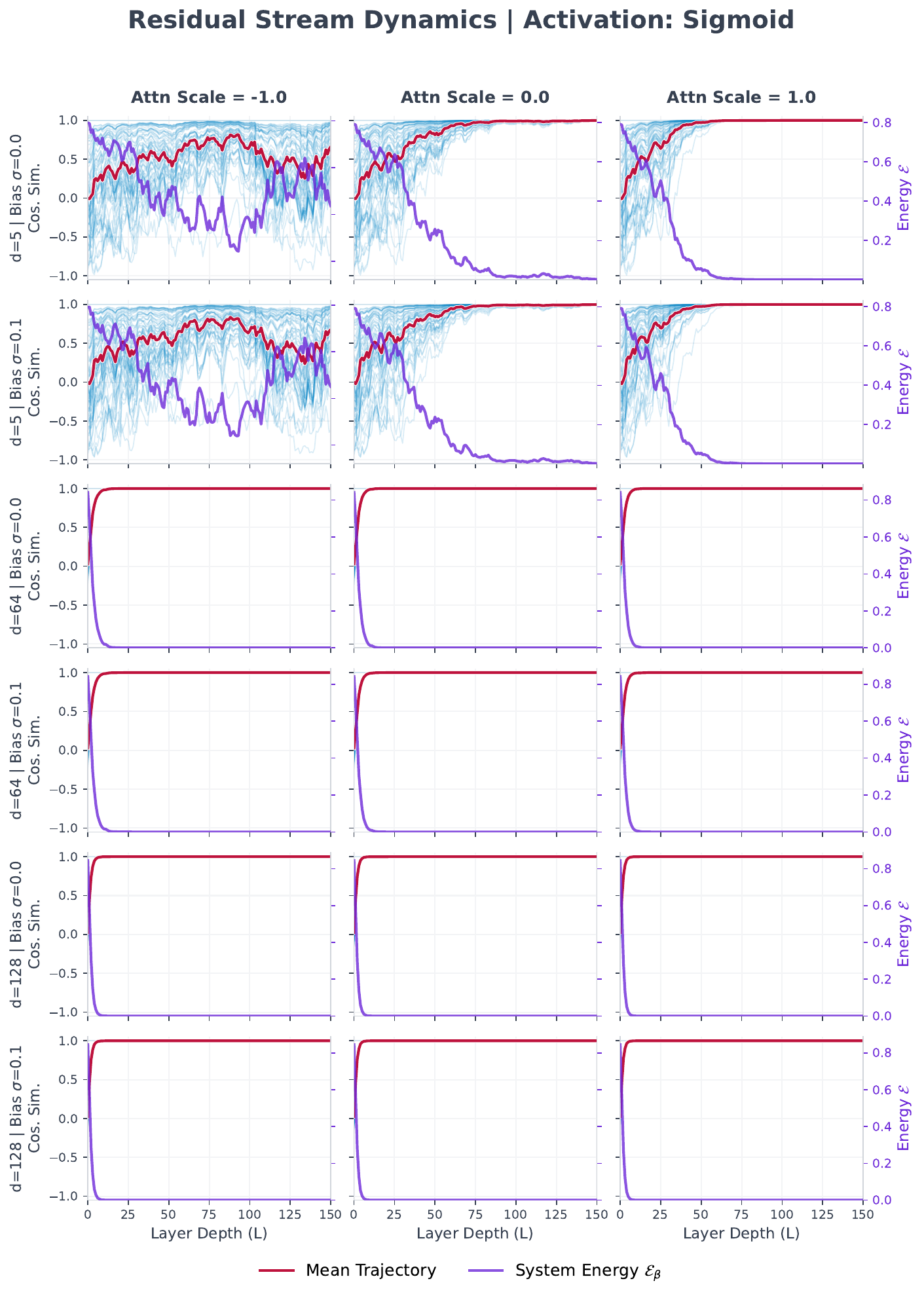}
    \caption{Numerical simulations of the discrete transformer dynamics with Sigmoid activation, for $N=64$ tokens over $L=150$ layers ($T=5.0$), dimensions $d\in\{5,64,128\}$, bias levels $\sigma\in\{0.0,0.1\}$, and attention scaling $1$, $0$, and $-1$. The panels show the pairwise cosine similarities and the interaction energy $\mathcal{E}_\beta(\mathbf X_\ell)$.}
    \label{fig:sigmoid}
\end{figure}

\newpage
\begin{figure}[hbt!]
    \centering
\includegraphics[width=0.95\textwidth]{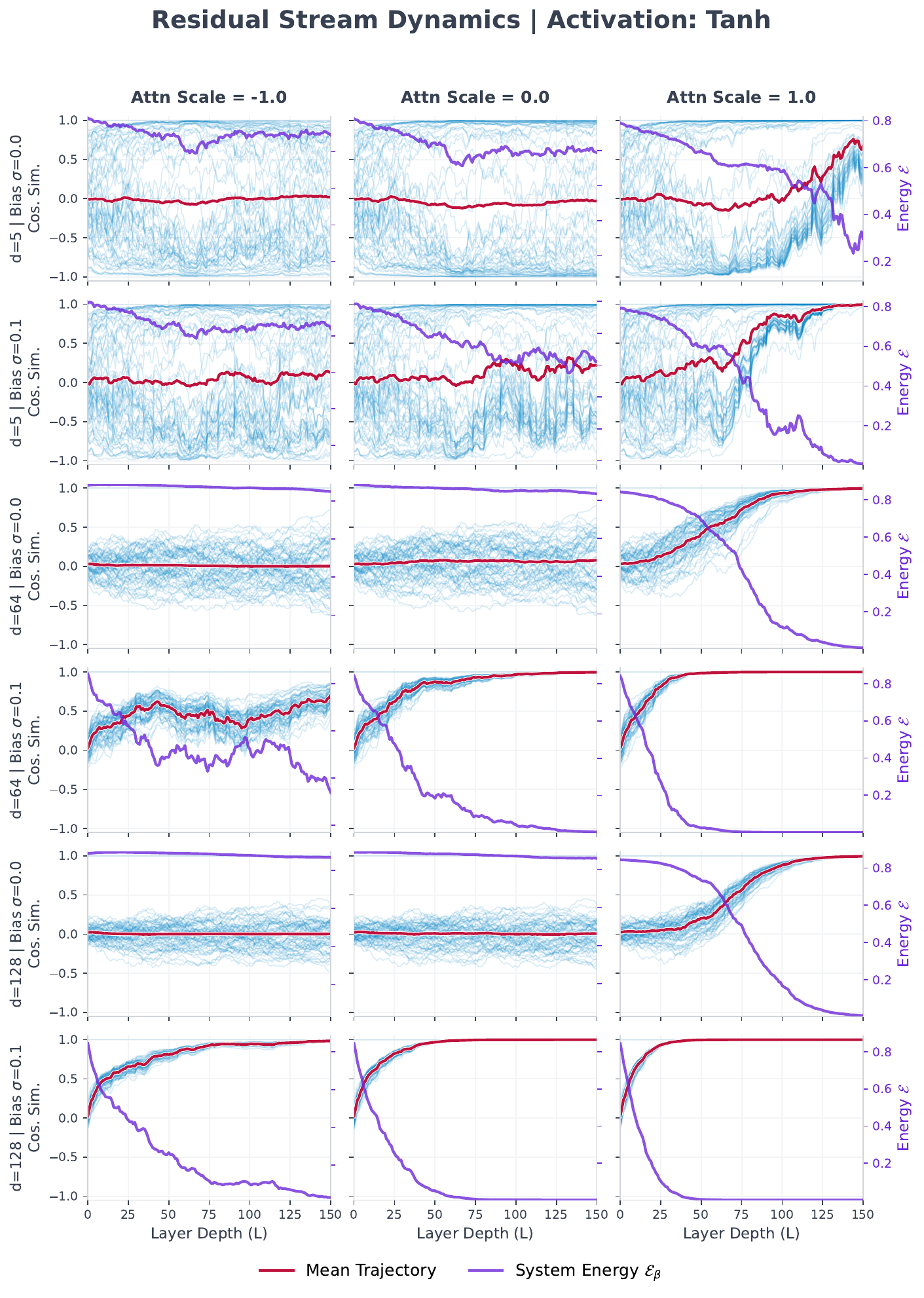}
    \caption{Numerical simulations of the discrete transformer dynamics with Tanh activation, for $N=64$ tokens over $L=150$ layers ($T=5.0$), dimensions $d\in\{5,64,128\}$, bias levels $\sigma\in\{0.0,0.1\}$, and attention scaling $1$, $0$, and $-1$. The panels show the pairwise cosine similarities and the interaction energy $\mathcal{E}_\beta(\mathbf X_\ell)$.}
    \label{fig:tanh}
\end{figure}

\begin{figure}[hbt!]
    \centering
\includegraphics[width=0.95\textwidth]{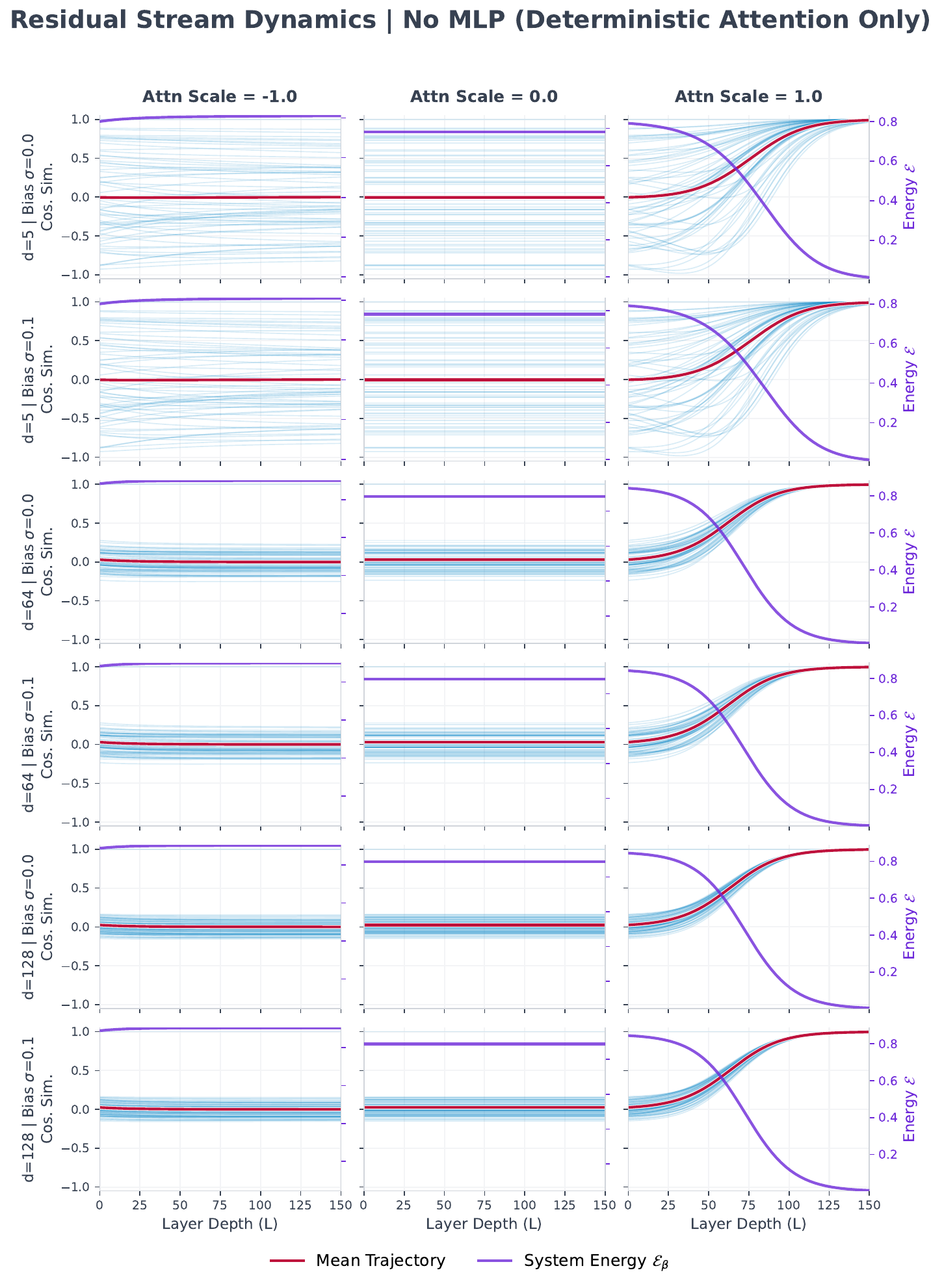}
    \caption{Numerical simulations of the discrete transformer dynamics without MLPs, for $N=64$ tokens over $L=150$ layers ($T=5.0$), dimensions $d\in\{5,64,128\}$, bias levels $\sigma\in\{0.0,0.1\}$, and attention scaling $1$, $0$, and $-1$. The panels show the pairwise cosine similarities and the interaction energy $\mathcal{E}_\beta(\mathbf X_\ell)$.}
    \label{fig:no_mlp}
\end{figure}

\clearpage

\end{document}